\renewcommand{\theequation}{\arabic{section}.\arabic{equation}}
\newtheorem{theorem}{Theorem}[section]
\newtheorem{prop}{Proposition}[section]
\newtheorem{lemma}{Lemma}[section]
\newtheorem{remark}{Remark}[section]
\newcommand{\ml}{\mathcal}
\newcommand{\mb}{\mathbb}
\DeclareMathOperator{\intt}{int}
\DeclareMathOperator{\extt}{ext}
\DeclareMathOperator{\bdd}{bdd}
\DeclareMathOperator{\divv}{div}
\def\XXint#1#2#3{{\setbox0=\hbox{$#1{#2#3}{\int}$ }
		\vcenter{\hbox{$#2#3$ }}\kern-.6\wd0}}
\title{Sharp large time asymptotic behavior for the multi-dimensional thermoelastic systems of type II and type III}
\author[1]{Wenhui Chen\thanks{Wenhui Chen (wenhui.chen.math@gmail.com)}}
\affil[1]{School of Mathematics and Information Science, Guangzhou University,\authorcr 510006 Guangzhou, China}
\author[2]{Ryo Ikehata\thanks{Ryo Ikehata (ikehatar@hiroshima-u.ac.jp)}}
\affil[2]{Department of Mathematics, Division of Educational Sciences, Graduate School of Humanities and Social Sciences, Hiroshima University, 739-8524 Higashi-Hiroshima, Japan}
\date{}
\begin{document}
		\maketitle

		\begin{abstract}
			\medskip
		In this paper, we study large time asymptotic behavior of the elastic displacement $u$ and the temperature difference $\theta$ for the thermoelastic systems of type II and type III in the whole space $\mb{R}^n$ without using the thermal displacement transformation. For the type III model with hyperbolic thermal effect,  we derive optimal growth/decay estimates and the novel double diffusion waves profiles for the solutions $u,\theta$ with the $L^1$ integrable initial data as large time, which improve the results in \cite{Zhang-Zuazua=2003,Reissig-Wang=2005,Yang-Wang=2006,Jachmann=2008}. This hyperbolic thermal law produces a stronger singularity than the classical Fourier law in thermoelastic systems. For the type II model lacking of dissipation mechanism, we obtain optimal growth estimates and the new double waves profiles for the solutions $u,\theta$ as large time in low dimensions. Particularly, these results on the type II/III models show the infinite time blowup phenomena for $u$ if $n\leqslant 4$ and $\theta$ if $n\leqslant 2$ in the $L^2$ norm due to the hyperbolic influence. We also clarify competitions between the elastic waves effect and the hyperbolic thermal effect for the thermoelastic systems via the critical dimensions.
			\\
			
			\noindent\textbf{Keywords:} thermoelastic system, hyperbolic heat conduction, double diffusion waves, optimal estimate, infinite time blowup, asymptotic profile \\
			
			\noindent\textbf{AMS Classification (2020)}  74F05, 74H40, 35B40, 35L52, 35Q74
		\end{abstract}
\fontsize{12}{15}
\selectfont
\newpage
\tableofcontents
\newpage
 
\section{Introduction}\label{Section_Introduction}\setcounter{equation}{0}
\subsection{Background for the thermoelastic systems of type II and type III}
\hspace{5mm}It is well-known that the reciprocal actions between elastic stress and thermal behavior (including the temperature difference of elastic heat conductive media) are modeled by thermoelastic systems mathematically (cf. \cite{Green-Naghdi=1991,Chand=1998,Jiang-Racke=2000}). The classical model of thermoelasticity is based on the Fourier law of heat conduction (the heat flux is proportional to the gradient of temperature), which is well-studied  in the last century by \cite{Dafermos=1968,Racke-Shibata=1991,Zheng=1995,Racke-Wang=1998,Jiang-Racke=2000} and rich references therein for some qualitative properties of solutions in bounded or unbounded domains. 

Unfortunately, the application of classical Fourier law in thermoelasticity leads to the physically unrealistic phenomenon: \emph{an infinite signal speed of paradox} that a sudden temperature or a thermal disturbance  changes at some points will be felt instantly everywhere. In order to overcome this paradox, A.E. Green and P.M. Naghdi in a series of pioneering papers \cite{Green-Naghdi=1991,Green-Naghdi=1992,Green-Naghdi=1993} re-examined the classical model of thermoelasticity based on an entropy equality rather than the usual entropy inequality (i.e. the constitutive assumption on the heat flux vector is different from the Fourier law) and introduced the so-called \emph{thermoelastic systems of type II and type III}, where the heat flux is also assumed to be proportional to the thermal displacement gradient. The theory of type II does not allow the dissipation of the energy and it is usually known as  the thermoelasticity without energy dissipation. The type III model that is the type II model with an additional dissipation can be understood by a system of wave equations with a kind of damping mechanism. A.E. Green and P.M. Naghdi in \cite{Green-Naghdi=1993} stated that they perhaps more natural candidates for their identifications as thermoelasticity than the usual theory.

 Precisely, from these hyperbolic heat conductions the thermoelastic systems in the whole space $\mb{R}^n$ are addressed by
\begin{align}\label{Eq-Original-01}
\begin{cases}
\ml{U}_{tt}-a^2\Delta\ml{U}-(b^2-a^2)\nabla\divv \ml{U}+\gamma_1\nabla\theta=0,&x\in\mb{R}^n,\ t>0,\\
\theta_{tt}-\kappa\Delta\theta-\delta\Delta\theta_t+\gamma_2\divv\ml{U}_{tt}=0,&x\in\mb{R}^n,\ t>0,\\
(\ml{U},\ml{U}_t)(0,x)=(\ml{U}_0,\ml{U}_1)(x),&x\in\mb{R}^n,\\
(\theta,\theta_t)(0,x)=(\theta_0,\theta_1)(x),&x\in\mb{R}^n,
\end{cases}
\end{align}
with the thermal parameter $\kappa>0$, the coupling coefficients such that $\gamma_1\gamma_2>0$, the dissipation coefficient $\delta\geqslant0$ in hyperbolic thermal laws ($\delta=0$ coincides with the thermoelasticity of type II, and $\delta>0$ coincides with the thermoelasticity of type III), where $\ml{U}=\ml{U}(t,x)\in\mb{R}^n$ and $\theta=\theta(t,x)\in\mb{R}$ are the elastic displacement and the temperature difference to the constant equilibrium state, respectively. The positive constants $b$ and $a$ denote the propagation speeds of longitudinal $P$-wave and transverse $S$-wave fulfilling $b>a>0$. Note that $a^2$ and $b^2-2a^2$ are the Lam\'e constants. Particularly, the heat flow for the type III model is analogous to the flow of one type of viscous fluids due to the dissipation $-\delta\Delta \theta_t$. In the last period, the mathematicians were wondering whether this dissipation is strong enough to produce some stabilities or not. They got the positive answer for some energy terms via extra assumptions on $|\nabla|^{-1}\theta_1$.\newpage
\begin{remark}
In the formal limit case $\gamma_1=\gamma_2=0$ with $\delta>0$, the temperature difference satisfies the strongly damped wave equation 
\begin{align}\label{Strongly-damped-waves}
\begin{cases}
\Theta_{tt}-\kappa\Delta\Theta-\delta\Delta\Theta_t=0,&x\in\mb{R}^n,\ t>0,\\
(\Theta,\Theta_t)(0,x)=(\Theta_0,\Theta_1)(x),&x\in\mb{R}^n,
\end{cases}	
\end{align}
which has been deeply studied by \cite{Shibata=2000,Ikehata-Todorova-Yordanov=2013,Ikehata=2014,Ikehata-Ono=2017} and references given therein. In the case with $\delta=0$ additionally it satisfies the well-known free wave equation. It is worth mentioning that the density and the velocity of linearized compressible Navier-Stokes system satisfy the model \eqref{Strongly-damped-waves} with suitable regular initial data where the parameter $\delta$ consists of the viscosity of compressible fluids. 
\end{remark}

As usual (e.g. \cite{Zhang-Zuazua=2003,Quintanilla-Racke=2003,Yang-Wang=2006,Jachmann=2008} in the type III model), thanks to the identity
\begin{align*}
\nabla \divv\ml{U} =\nabla\times(\nabla\times\ml{U})+\Delta\ml{U},
\end{align*}
the solution $\ml{U}$ can be decomposed into the potential part $\ml{U}^{p_0}$, i.e. $\nabla\times\ml{U}^{p_0}=0$, and the solenoidal part $\ml{U}^{s_0}$, i.e. $\divv\ml{U}^{s_0}=0$, in a weak sense via the Helmholtz decomposition for lower dimensions, or the decomposition of displacement vector field (with some decay conditions on the initial data \cite{Munoz-Rivera=1993,Chorin-Marsden=1993}) for higher dimensions. Then, the solenoidal part $\ml{U}^{s_0}=\ml{U}^{s_0}(t,x)$ solves the conservative free wave equation
\begin{align}\label{Thermoelastic-Solenoidal}
\begin{cases}
\ml{U}^{s_0}_{tt}-a^2\Delta \ml{U}^{s_0}=0,&x\in\mb{R}^n,\ t>0,\\
(\ml{U}^{s_0},\ml{U}^{s_0}_t)(0,x)=(\ml{U}^{s_0}_0,\ml{U}^{s_0}_1)(x),&x\in\mb{R}^n,
\end{cases}
\end{align}
moreover, the potential part $\ml{U}^{p_0}=\ml{U}^{p_0}(t,x)$ solves the thermoelastic systems
\begin{align}\label{Thermoelastic-Potential}
	\begin{cases}
		\ml{U}^{p_0}_{tt}-b^2\Delta\ml{U}^{p_0}+\gamma_1\nabla\theta=0,&x\in\mb{R}^n,\ t>0,\\
		\theta_{tt}-\kappa\Delta\theta-\delta\Delta\theta_t+\gamma_2\divv\ml{U}^{p_0}_{tt}=0,&x\in\mb{R}^n,\ t>0,\\
		(\ml{U}^{p_0},\ml{U}^{p_0}_t)(0,x)=(\ml{U}^{p_0}_0,\ml{U}^{p_0}_1)(x),&x\in\mb{R}^n,\\
		(\theta,\theta_t)(0,x)=(\theta_0,\theta_1)(x),&x\in\mb{R}^n.
	\end{cases}
\end{align}
Actually, the qualitative properties of solution to the free wave equation \eqref{Thermoelastic-Solenoidal} are well-known in the last centuries (cf. \cite{Strichartz=1970,Peral=1980} and the monograph \cite{Evans=1998}). Concerning the thermoelastic system of type III, i.e. the Cauchy problem \eqref{Thermoelastic-Potential} with $\delta>0$, \cite{Zhang-Zuazua=2003} and \cite{Quintanilla-Racke=2003} independently studied energy decays by using the classical energy method and the spectral method. Afterward, \cite{Reissig-Wang=2005} in 1d and \cite{Yang-Wang=2006} in 3d investigated several qualitative properties for the energy term via the multi-steps diagonalization method (it can show asymptotic representations of unknowns in different frequency zones), including well-posedness, propagation of singularities and $L^p-L^q$ decay estimates with the heat-type decay rate $(1+t)^{-\frac{n}{2}(\frac{1}{p}-\frac{1}{q})}$ on the H\"older's conjugate $1/p+1/q=1$. The parabolic-type asymptotic profile for the energy term was derived in \cite{Jachmann=2008}. In some sense, their large time behavior are mainly (or completely sometimes) described by the heat-like or parabolic-like effect, nevertheless the hyperbolic effect from the elastic waves and the hyperbolic thermal law is ignored. We in this paper will recover some hyperbolic influences via studying the solutions themselves instead of energy terms. For another, concerning the thermoelastic system of type II, i.e. the Cauchy problem \eqref{Thermoelastic-Potential} with $\delta=0$, the well-posed result for the energy term was obtained in \cite{Jachmann=2008} as a corollary of \cite{Reissig-Wang=2005,Yang-Wang=2006}. The other related models to the thermoelastic systems \eqref{Thermoelastic-Potential} with additional lower order terms are referred the interested reader to \cite{Jachmann=2008,Jachmann=2009,Yang=2022}.
\subsection{Motivations and difficulties}
\hspace{5mm}We next pose two main questions in the study of the thermoelastic systems \eqref{Thermoelastic-Potential}, which are also our motivations in this paper. 
 \begin{itemize}
 	\item Excluding the strong singularity $|\xi|^{-1}$ for small frequencies, the authors of \cite{Zhang-Zuazua=2003,Reissig-Wang=2005,Yang-Wang=2006,Jachmann=2008} have to propose some assumptions on $|\nabla|^{-1}\theta_1$ when they studied well-posedness and decay estimates for the energy term $(|\nabla|\ml{U}^{p_0},\ml{U}^{p_0}_t,\theta)$ in the type III model. The main reason for introducing these spaces consists in the use of thermal displacement transformation \eqref{Disp-ther-01} with $|\nabla|\Psi_0\in L^1$ or $L^2$. Note that $|\nabla|\Psi_0$ contains $|\nabla|^{-1}\theta_1$, see \cite[Remark 7.1]{Zhang-Zuazua=2003}, \cite[Lemma 4.1]{Reissig-Wang=2005}, \cite[Remark 2.2]{Yang-Wang=2006} for details. Roughly speaking, among other initial data, \cite{Zhang-Zuazua=2003,Reissig-Wang=2005,Yang-Wang=2006} showed
 	\begin{align}\label{Est-Previous-Pointwise}
 	\chi_{\intt}(\xi)\big(|\xi|\,|\widehat{\ml{U}}^{p_0}|+|\widehat{\ml{U}}_t^{p_0}|+|\widehat{\theta}|\big)\lesssim\chi_{\intt}(\xi)\,\mathrm{e}^{-c|\xi|^2t}\,\big(|\xi|^{-1}|\widehat{\theta}_1|\big),
 	\end{align}
 which leads to $\|\,|\nabla|^{-1}\theta_1\|_{L^1}$ with the heat-type decay rate $t^{-\frac{n}{4}}$ from the Gaussian kernel $\mathrm{e}^{-c|\xi|^2t}$.
 	 Nevertheless, it seems not a natural assumption on initial data, i.e. $\theta_1\in\dot{H}^{-1}_1$ the homogeneous Sobolev space with negative order. The first question arises.
 	\begin{center}
 		\textbf{Question I}: \emph{What are qualitative properties of solutions with the $L^1$ data? Namely, can one drop the assumption $|\nabla|^{-1}$ acting on the initial data?}
 	\end{center}
 This is non-trivial because of $\chi_{\intt}(\xi)\,\mathrm{e}^{-c|\xi|^2t}\,|\xi|^{-1}\notin L^2$ for any $t>0$ if $n\leqslant 2$ and the failure of Hardy-Littlewood-Sobolev inequality \cite{Lieb=1983}, i.e. $L^m\not\hookrightarrow \dot{H}^{-1}_1$ for any $m\geqslant 1$. For these reasons, we are going to try another approach to recover weak oscillating functions from the elastic waves  and the hyperbolic thermal law to compensate this singularity.
 \item  As we mentioned in the above, all related works in terms of the thermoelastic systems of type II and type III focused on the energy terms $|\nabla|\ml{Y},\ml{Y}_t$ with $\ml{Y}\in\{\ml{U}^{p_0},\Psi\}$, where $\Psi=\Psi(t,x)$ denotes the thermal displacement (cf. \cite[Equations (1.20), (1.21)]{Zhang-Zuazua=2003})
 \begin{align}
 \Psi(t,x):=\int_0^t\theta(\tau,x)\,\mathrm{d}\tau+\Psi_0(x) \ \ \mbox{with}\ \ \kappa\Delta\Psi_0(x)=\theta_1(x)-\delta\Delta\theta_0(x)+\gamma\divv u_1(x)\label{Disp-ther-01}
 \end{align} considered in \cite{Zhang-Zuazua=2003,Reissig-Wang=2005,Yang-Wang=2006,Jachmann=2008}, where $\Psi_0(x)$ contains the data $|\nabla|^{-1}\theta_1(x)$ from Question I. Hence, a natural and important question arises.
 \begin{center}
 	\textbf{Question II}: \emph{Can we describe more detailed information of the elastic displacement $\ml{U}^{p_0}$ and the temperature difference $\theta$ themselves for large time?}
 \end{center}
 To the best of knowledge of authors, this question remains open for the thermoelastic systems of type II and type III. Concerning the sharp estimates for $\ml{U}^{p_0}$, from the previous literature and \eqref{Est-Previous-Pointwise}, the solution $\widehat{\ml{U}}^{p_0}$ has the stronger singularity $|\xi|^{-2}$ implying $\chi_{\intt}(\xi)\,\mathrm{e}^{-c|\xi|^2t}\,|\xi|^{-2}\notin L^2$ for any $t>0$ even for higher dimensions $n\leqslant 4$. This strong singularity cannot be completely compensated by the weak oscillating functions (as Question I) from the hyperbolic part of the models, especially, when $n\leqslant 2$. To solve this difficulty, one should deeply understand the large time profiles of solutions. We later will introduce two new structures: \emph{double diffusion waves} and \emph{double waves} for answering this question. This also contributes to understand the hyperbolic thermal effect versus the elastic waves effect.
 \end{itemize}

\subsection{Main purposes of this manuscript}
\hspace{5mm}By denoting $u:=\ml{U}^{p_0}$ for the sake of simplicity from the original problem \eqref{Thermoelastic-Potential}, in the present paper we study the following thermoelastic systems of type II (when $\delta=0$) and type III (when $\delta>0$) in the whole space $\mb{R}^n$ for any $n\in\mb{N}$:
\begin{align}\label{Thermoelastic-Our-Problem}
	\begin{cases}
		u_{tt}-b^2\Delta u+\gamma\nabla\theta=0,&x\in\mb{R}^n,\ t>0,\\
		\theta_{tt}-\kappa\Delta\theta-\delta\Delta\theta_t+\gamma\divv u_{tt}=0,&x\in\mb{R}^n,\ t>0,\\
		(u,u_t)(0,x)=(u_0,u_1)(x),&x\in\mb{R}^n,\\
		(\theta,\theta_t)(0,x)=(\theta_0,\theta_1)(x),&x\in\mb{R}^n,
	\end{cases}
\end{align}
where we took $\gamma_1=\gamma_2=\gamma\in\mb{R}\backslash\{0\}$ without loss of generality (otherwise, one may consider the change of variable $\bar{\theta}(t,x):=\sqrt{\gamma_1/\gamma_2}\,\theta(t,x)$ to get our desired system). Again, we set $b,\kappa>0$ and all initial data belonging to the $L^2$ or $L^1$ frameworks. Our contributions are twofold.
\begin{itemize}
	\item Concerning the type III model ($\delta>0$), we find the critical dimensions $n=4$ for $u$ (optimal growth when $n\leqslant 4$ and optimal decay when $n\geqslant 5$) as well as $n=2$ for $\theta$ (optimal growth when $n\leqslant 2$ and optimal decay when $n\geqslant 3$). Furthermore, their large time profiles are described by the double diffusion waves with different singularities.
	\item Concerning the type II model ($\delta=0$), we derive optimal growth estimates when $n\leqslant 4$ for $u$ and when $n\leqslant 2$ for $\theta$, but bounded estimates for higher dimensions. Furthermore, their large time profiles are described by the double waves with different singularities.
\end{itemize}
The optimality, throughout this manuscript, is guaranteed by the same behavior of upper bound and lower
bound for large time. These results imply that the decisive role in low dimensions is the elastic waves effect, and the hyperbolic thermal effect works in high dimensions only. In Table \ref{Table-01}, we also compare different influences from the transversal waves $\ml{U}^{s_0}$, the longitudinal waves $\ml{U}^{p_0}$, the thermal waves $\theta$ from the original thermoelastic coupled systems \eqref{Eq-Original-01} for large time.

Our discoveries are the new structures of ``double diffusion waves'' i.e. $\ml{M}(t,x)$ with $c_j>0$, and ``double waves'' i.e. $\ml{M}(t,x)$ with $c_j\equiv0$, which do not appear in other evolution equations (by removing the waves parts $\sin(\nu_j|\xi|t)/(\nu_j|\xi|)$ and the crucial singularity $|\xi|^{-\sigma}$, the so-called ``double diffusion'' was firstly introduced by \cite{D'Abbicco-Ebert=2014}). For this reason, one of our innovations in this paper is to study optimal large time behavior for the following Fourier multipliers:
\begin{align*}
\ml{M}(t,x):=\ml{F}_{\xi\to x}^{-1}\left[\left(\ell_1\frac{\sin(\nu_1|\xi|t)}{\nu_1|\xi|}\,\mathrm{e}^{-c_1|\xi|^2t}-\ell_2\frac{\sin(\nu_2|\xi|t)}{\nu_2|\xi|}\,\mathrm{e}^{-c_2|\xi|^2t}\right)\frac{1}{|\xi|^{\sigma}}\right]
\end{align*}
with $\sigma\in\mb{N}_0$, $\ell_j\neq0$, $\nu_j>0$ and $c_j\geqslant0$ for $j\in\{1,2\}$. It indicates some interactions between two different waves or diffusion waves with the propagation speeds $\nu_1$ and $\nu_2$, respectively. The asymptotic behavior of double (diffusion) waves are quite different from the single (diffusion) waves due to some interactions. For one thing, their large time behavior heavily depend on the parameters $\nu_1,\nu_2,\ell_1,\ell_2$, which will cause the finite time blow-up in some ranges of parameters (see Remark \ref{Rem-parameter-blowup} in detail). For another, in the case $\sigma>0$ there is a stronger singularity $|\xi|^{-\sigma}$ (for small frequencies). We need to utilize the refined WKB analysis and the Fourier analysis in the extended $(t,|\xi|)$-space to derive optimal estimates for $\ml{M}(t,\cdot)$ in the $L^2$ norm. This is also the key to improve the previous results in \cite{Zhang-Zuazua=2003,Reissig-Wang=2005,Yang-Wang=2006,Jachmann=2008} by dropping some unnatural assumptions on the initial data and answer Questions I and II.

\paragraph{\large Notation} The constants $c$ and $C$ may be changed from line to line but are independent of the time variable. We write $f\lesssim g$ if there exists a positive constant $C$ such that $f\leqslant Cg$, analogously for $f\gtrsim g$. The sharp relation $f\approx g$ holds if and only if $g\lesssim f\lesssim g$. We take the following zones of the Fourier space:
\begin{align*}
	\ml{Z}_{\intt}(\varepsilon_0):=\{|\xi|\leqslant\varepsilon_0\ll1\}, \ \ 
	\ml{Z}_{\bdd}(\varepsilon_0,N_0):=\{\varepsilon_0\leqslant |\xi|\leqslant N_0\},\ \ 
	\ml{Z}_{\extt}(N_0):=\{|\xi|\geqslant N_0\gg1\}.
\end{align*}
Moreover, the cut-off functions $\chi_{\intt}(\xi),\chi_{\bdd}(\xi),\chi_{\extt}(\xi)\in \mathcal{C}^{\infty}$ having their supports in the corresponding zones $\ml{Z}_{\intt}(\varepsilon_0)$, $\ml{Z}_{\bdd}(\varepsilon_0/2,2N_0)$ and $\ml{Z}_{\extt}(N_0)$, respectively, such that
\begin{align*}
	\chi_{\bdd}(\xi)=1-\chi_{\intt}(\xi)-\chi_{\extt}(\xi)\ \ \mbox{for all}\ \ \xi \in \mb{R}^n.
\end{align*}
The mean of a summable function $f$ is denoted by $P_f:=\int_{\mb{R}^n}f(x)\,\mathrm{d}x$. We define the weighted space
\begin{align*}
L^{1,1}:=\big\{f\in L^1:\ \|f\|_{L^{1,1}}:=\|(1+|x|)f\|_{L^1}<+\infty \big\}.
\end{align*}
 Lastly, let us define two different time-dependent functions (that will be used in the optimal estimates of solutions) as follows:
\begin{align*}
	\ml{D}_n(1+t):=\begin{cases}
		(1+t)^{2-\frac{n}{2}}&\mbox{if}\ \ n\in\{1,3\},\\
		(1+t)^{2-\frac{n}{2}}[\ln(\mathrm{e}+t)]^{\frac{1}{2}}&\mbox{if}\ \ n\in\{2,4\},\\
		(1+t)^{1-\frac{n}{4}}&\mbox{if}\ \ n\geqslant5,
	\end{cases}
\end{align*}
containing an additional logarithmic growth when $n\in\{2,4\}$, and
\begin{align*}
 \ml{E}_n(1+t):=\begin{cases}
		(1+t)^{\frac{1}{2}}&\mbox{if}\ \ n=1,\\
		[\ln(\mathrm{e}+t)]^{\frac{1}{2}}&\mbox{if}\ \ n=2,\\
		(1+t)^{\frac{1}{2}-\frac{n}{4}}&\mbox{if}\ \ n\geqslant3,
	\end{cases}
\end{align*}
containing an additional logarithmic growth when $n=2$.

\section{Main results}\label{Section_Results}\setcounter{equation}{0}
\subsection{Results for the thermoelastic system of type III}
\hspace{5mm}In order to describe large time behavior of solutions $u,\theta$, respectively, strongly motivated by $\widehat{\ml{G}}_{1}$ and $\widehat{\ml{G}}_2$ in Subsection \ref{Subsection-Pointwise-Estimates}, let us first introduce the profiles $\varphi=\varphi(t,x)\in\mb{R}^n$ and $\psi=\psi(t,x)\in\mb{R}$ via Fourier multipliers as follows:
\begin{align*}
\varphi&:=\ml{F}^{-1}_{\xi\to x}\left[\frac{\gamma}{\alpha_2}\left(\frac{\sin(\nu_1|\xi|t)}{\nu_1|\xi|}\,\mathrm{e}^{-c_1|\xi|^2t}-\frac{\sin(\nu_2|\xi|t)}{\nu_2|\xi|}\,\mathrm{e}^{-c_2|\xi|^2t}\right)\frac{i\xi}{|\xi|^2}\right]P_{\theta_1},\\
\psi&:=\ml{F}^{-1}_{\xi\to x}\left[-\frac{1}{2\alpha_2}\left((\alpha_0-\alpha_2)\,\frac{\sin(\nu_1|\xi|t)}{\nu_1|\xi|}\,\mathrm{e}^{-c_1|\xi|^2t}-(\alpha_0+\alpha_2)\,\frac{\sin(\nu_2|\xi|t)}{\nu_2|\xi|}\,\mathrm{e}^{-c_2|\xi|^2t}\right)
\right]P_{\theta_1},
\end{align*}
where we denote some constants throughout this paper by
\begin{align*}
\alpha_0:=b^2-\kappa-\gamma^2,\ \ \alpha_1:=b^2+\kappa+\gamma^2,\ \ \alpha_2:=\sqrt{\alpha_1^2-4b^2\kappa},
\end{align*} and
\begin{align}\label{parameters}
	\nu_1:=\sqrt{\frac{1}{2}\,(\alpha_1+\alpha_2)},\ \ \nu_2:=\sqrt{\frac{1}{2}\,(\alpha_1-\alpha_2)},\ \ c_1:=\frac{\delta}{4}\left(-\frac{\alpha_0}{\alpha_2}+1\right),\ \ c_2:=\frac{\delta}{4}\left(\frac{\alpha_0}{\alpha_2}+1\right).
\end{align}
Notice that $\nu_1,\nu_2,c_1,c_2>0$. Indeed, from the relations $\nu_1^2+\nu_2^2=b^2+\kappa+\gamma^2$ and $\nu_1^2\nu_2^2=b^2\kappa$, one notices that the propagation speeds $\nu_j$ are related to the coupling coefficient $\gamma$ also due to the equivalent form of \eqref{Thermoelastic-Our-Problem} being
\begin{align*}
\begin{cases}
(\partial_t^2-b^2\Delta)u+\gamma\nabla\theta=0,&x\in\mb{R}^n,\ t>0,\\
(\partial_t^2-(\kappa+\gamma^2)\Delta-\delta\Delta\partial_t)\theta+\gamma b^2\Delta\divv u=0,&x\in\mb{R}^n,\ t>0,
\end{cases}
\end{align*} 
which implies similar behavior between $\divv u$ (in turn $u$) and $\theta$.

\begin{theorem}\label{Thm-III-U}
Let us assume $(u_0,u_1)\in (L^2\cap L^1)^n\times (L^2\cap L^1)^n$ and $(\theta_0,\theta_1)\in (L^2\cap L^1)\times (L^2\cap L^1)$ for any $n\geqslant 1$. The elastic displacement $u=u(t,x)$ to the thermoelastic system of type III, i.e. the Cauchy problem \eqref{Thermoelastic-Our-Problem} with $\delta>0$, globally in time exists such that $u\in(\ml{C}([0,+\infty),L^2))^n$ and satisfies the following upper bound estimates:
\begin{align*}
	\|u(t,\cdot)\|_{(L^2)^n}&\lesssim\ml{D}_n(1+t) \left(\|(u_0,u_1)\|_{(L^2\cap L^1)^n\times (L^2\cap L^1)^n}+\|(\theta_0,\theta_1)\|_{(L^2\cap L^1)\times (L^2\cap L^1)}\right),\\
	\|u(t,\cdot)-\varphi(t,\cdot)\|_{(L^2)^n}&=
	o\big(\ml{D}_n(1+t)\big)\ \ \mbox{for large time}\ \ t\gg1.
\end{align*}
Furthermore, let us assume $\alpha_1<3\alpha_2$ if $n=4$ (no any restriction in other dimensions). Then, it satisfies the following lower bound estimates:
\begin{align*}
\|u(t,\cdot)\|_{(L^2)^n}\gtrsim\ml{D}_n(1+t)|P_{\theta_1}|
\end{align*}
for large time $t\gg1$, provided that $P_{\theta_1}\neq0$.
\end{theorem}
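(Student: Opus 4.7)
The plan is to work entirely on the Fourier side: first derive a sharp representation of $\hat u(t,\xi)$ in the three frequency zones $\mathcal{Z}_{\mathrm{int}}(\varepsilon_0)$, $\mathcal{Z}_{\mathrm{bdd}}(\varepsilon_0,N_0)$, $\mathcal{Z}_{\mathrm{ext}}(N_0)$, then isolate the $P_{\theta_1}$-piece in the small-frequency representation, and finally reduce matters to the $L^2$ analysis of the double diffusion wave Fourier multiplier $\mathcal{M}(t,\cdot)$ with $\sigma=1$. After Fourier-transforming \eqref{Thermoelastic-Our-Problem}, I would diagonalize the resulting $4\times 4$ first-order amplification system via the multi-step procedure of \cite{Reissig-Wang=2005,Yang-Wang=2006,Jachmann=2008}: in $\mathcal{Z}_{\mathrm{int}}$ the characteristic roots expand as $\pm i\nu_j|\xi|-c_j|\xi|^2+O(|\xi|^3)$ with $\nu_j,c_j$ as in \eqref{parameters}, which yields an explicit representation
\begin{equation*}
\hat u(t,\xi)=\sum_{k=0,1}K^{(u)}_k(t,\xi)\,\hat u_k(\xi)+\sum_{k=0,1}K^{(\theta)}_k(t,\xi)\,\hat\theta_k(\xi),
\end{equation*}
in which the only kernel carrying the worst singularity at $\xi=0$ is $K^{(\theta)}_1$, namely the double diffusion wave multiplier times $i\xi/|\xi|^2$. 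In $\mathcal{Z}_{\mathrm{bdd}}\cup\mathcal{Z}_{\mathrm{ext}}$ all kernels are bounded and decay exponentially, so these zones produce only exponentially decaying $L^2$ contributions, and global existence $u\in(\mathcal{C}([0,\infty),L^2))^n$ follows from Plancherel.

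Next I would extract the profile. Writing $\hat\theta_1(\xi)=P_{\theta_1}+r(\xi)$ with $|r(\xi)|\leqslant 2\|\theta_1\|_{L^1}$ and $r(\xi)\to 0$ as $|\xi|\to 0$, the $P_{\theta_1}$-piece of $K^{(\theta)}_1\,\hat\theta_1$ is by construction exactly $\hat\varphi(t,\xi)$, whereas the $r$-piece carries the same multiplier multiplied by a factor vanishing at $\xi=0$. The remaining kernels $K^{(u)}_k$, $K^{(\theta)}_0$ carry a milder singularity multiplied by $\hat f(\xi)$ with $f\in L^1$, hence contribute lower-order terms. The crux of the argument is therefore the two-sided estimate $\|\varphi(t,\cdot)\|_{(L^2)^n}\approx\mathcal{D}_n(1+t)|P_{\theta_1}|$. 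Using Plancherel and the rescaling $\xi=\eta/t$, this reduces to
\begin{equation*}
\|\varphi(t,\cdot)\|_{(L^2)^n}^2\approx t^{4-n}\int_{\mathbb{R}^n}\left|\frac{\sin(\nu_1|\eta|)}{\nu_1|\eta|}\mathrm{e}^{-c_1|\eta|^2/t}-\frac{\sin(\nu_2|\eta|)}{\nu_2|\eta|}\mathrm{e}^{-c_2|\eta|^2/t}\right|^2\frac{\mathrm{d}\eta}{|\eta|^2}\,|P_{\theta_1}|^2,
\end{equation*}
which I would analyze scale by scale: the Taylor regime $|\eta|\lesssim 1$ contributes a bounded positive constant; the oscillatory regime $|\eta|\gtrsim 1$, cut off by the exponentials at $|\eta|\sim\sqrt t$, contributes the polynomial $t$-growth in high dimensions and the logarithmic correction in the critical dimensions $n\in\{2,4\}$.

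Finally, assembling everything: the upper bound $\|u(t,\cdot)\|_{(L^2)^n}\lesssim\mathcal{D}_n(1+t)\cdot(\text{data norms})$ follows by adding the $\mathcal{Z}_{\mathrm{bdd}}\cup\mathcal{Z}_{\mathrm{ext}}$ exponential contribution to the $\mathcal{Z}_{\mathrm{int}}$ bound just obtained. The asymptotic profile $\|u-\varphi\|_{(L^2)^n}=o(\mathcal{D}_n(1+t))$ follows because each remainder term, after the same rescaling $\xi=\eta/t$, has an integrand tending pointwise to $0$ (either because the symbol gains an extra power of $|\eta|/t$ via Taylor expansion, or because $r(\eta/t)\to 0$ by dominated convergence), so a dominated-convergence argument yields a $t$-factor strictly smaller than $\mathcal{D}_n(1+t)$. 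The lower bound is then the reverse triangle inequality $\|u\|\geqslant\|\varphi\|-\|u-\varphi\|$. I expect the hardest step to be the matching lower bound for $\|\varphi\|_{(L^2)^n}$, because one must show that no cancellation extinguishes the leading coefficient of the rescaled integral above; this is straightforward for $n\neq 4$, but in the critical dimension $n=4$ the logarithmic coefficient has a delicate algebraic structure, and the assumption $\alpha_1<3\alpha_2$ is exactly the non-degeneracy condition that prevents it from vanishing.
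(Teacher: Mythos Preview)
Your overall strategy---Fourier-side zone analysis, identification of the dominant small-frequency kernel acting on $\widehat{\theta}_1$, two-sided $L^2$ bounds for $\varphi$, and the reverse triangle inequality---matches the paper's. Two tactical differences are worth noting. First, rather than diagonalising the first-order system, the paper eliminates $\theta$ to obtain a scalar fourth-order equation and reads off the kernels from an explicit Vandermonde representation (Section~3.1, Appendix~A, Proposition~\ref{Prop-small}); this makes the small-frequency expansions and the identification of the leading piece $\widehat{\ml{G}}_1\widehat{\theta}_1$ very concrete. Second, for the remainder $\widehat{\theta}_1-P_{\theta_1}$ the paper does not use a Fourier-side dominated-convergence argument but a physical-space split at $|y|=t^{1/8}$: the near part gains an honest factor $|\xi|$ through $\nabla\ml{G}_1$, and the far part is handled via $\|\theta_1\|_{L^1(|y|\geqslant t^{1/8})}\to 0$ (Subsection~3.5.3).

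There is a genuine gap in your sharp analysis of $\|\varphi\|_{(L^2)^n}$. Under the rescaling $\xi=\eta/t$ the oscillatory regime $|\eta|\gtrsim 1$ contributes, in polar coordinates, $\int_1^{\sqrt{t}}|\eta|^{n-5}\,\mathrm{d}|\eta|$ (since each sinc is $O(|\eta|^{-1})$), which indeed gives the $\ln t$ for $n=4$ and the power $t^{(n-4)/2}$ for $n\geqslant 5$. But for $n\leqslant 3$ this integral \emph{converges}, so your rescaling yields only the prefactor $t^{4-n}$; that matches $[\ml{D}_n(1+t)]^2$ for $n\in\{1,3\}$ but gives $t^{2}$ rather than $t^{2}\ln t$ for $n=2$. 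The paper does not rely on a single rescaling here: for $n\in\{1,2\}$ it decomposes the double-sinc kernel via the mean value theorem into pieces $I_{0,1}+I_{0,2}+I_{0,3}$ (Lemma~\ref{Lemma-Very-Low-D}), passes to the diffusion variable $w=r\sqrt{t}$, and extracts the extra logarithm for $n=2$ from the $\cos(\beta_3 rt)$ contribution on carefully chosen $t$-dependent subintervals. A related difficulty affects your dominated-convergence step for the $r(\xi)$ remainder when $n\geqslant 4$: there the rescaled integrand is not dominated by a fixed $L^1$ function of $\eta$ (its mass drifts to $|\eta|\sim\sqrt{t}$), so the argument as stated does not close; the paper's physical-space split works uniformly in $n$.
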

\begin{remark}
Combining the derived estimates in Theorem \ref{Thm-III-U}, the solution $u$ satisfies the optimal large time  estimates
\begin{align*}
\|u(t,\cdot)\|_{(L^2)^n}\approx \ml{D}_n(1+t)
\end{align*}
provided that $P_{\theta_1}\neq0$. It implies the infinite time $L^2$-blowup of $u(t,\cdot)$ when $n\leqslant 4$ and the polynomially decay estimates when $n\geqslant 5$, namely, the critical dimension for $u$ is $n=4$. Particularly, the new growth rates of polynomial-logarithmic type are found. Actually, the decay structure of $u$ is completely provided by the viscous dissipation $-\delta\Delta\theta_t$ indirectly via the coupling terms $(\gamma\nabla\theta,\gamma\divv u_{tt})^{\mathrm{T}}$. Hence, the damping effect of $u$ is weak.
\end{remark}
\begin{remark}
By subtracting the function $\varphi(t,\cdot)$ in the $L^2$ norm, the error estimates in Theorem \ref{Thm-III-U} hold for any $n\geqslant 1$. Thus, the large time profile of $u$ can be explained by $\varphi$ which is the double diffusion waves with the strong singularity. To the best of authors' knowledge, the terminology ``double diffusion waves'' is the first time to be introduced (quite different from the double diffusion in the structurally damped waves model \cite{D'Abbicco-Ebert=2014} due to strong singularities in our waves parts). By denoting the classical diffusion waves (see, for example, \cite{Ikehata-Todorova-Yordanov=2013,Ikehata=2014,Ikehata-Ono=2017})
\begin{align*}
\ml{W}_{\nu_k,c_k}(t,x):=\ml{F}_{\xi\to x}^{-1}\left(\frac{\sin(\nu_k|\xi|t)}{\nu_k|\xi|}\,\mathrm{e}^{-c_k|\xi|^2t}\right)P_{\theta_1}
\end{align*}
with the propagation speed $\nu_k$ and the diffusion coefficient $c_k$, then the double diffusion waves $\varphi$ is the interaction between $\ml{W}_{\nu_1,c_1}(t,x)$ and $\ml{W}_{\nu_2,c_2}(t,x)$ with the strong singularity (for small frequencies). Due to two different propagation speed $\nu_1\neq\nu_2$, some new phenomena arise comparing with the single diffusion waves. More detailed explanations will be shown in Subsection \ref{Subsection-Double-Diffusion-Waves}.
\end{remark}

\begin{theorem}\label{Thm-III-theta}
 Let us assume $(u_0,u_1)\in (H^1\cap L^1)^n\times (L^2\cap L^1)^n$ and $(\theta_0,\theta_1)\in (L^2\cap L^1)\times (L^2\cap L^1)$ for any $n\geqslant 1$. The temperature difference $\theta=\theta(t,x)$ to the thermoelastic system of type III, i.e. the Cauchy problem \eqref{Thermoelastic-Our-Problem} with $\delta>0$, globally in time exists such that $\theta\in\ml{C}([0,+\infty),L^2)$ and satisfies the following upper bound estimates:
	\begin{align*}
	\|\theta(t,\cdot)\|_{L^2}&\lesssim \ml{E}_n(1+t)\left(\|(u_0,u_1)\|_{(H^1\cap L^1)^n\times (L^2\cap L^1)^n}+\|(\theta_0,\theta_1)\|_{(L^2\cap L^1)\times (L^2\cap L^1)}\right),\\
		\|\theta(t,\cdot)-\psi(t,\cdot)\|_{L^2}&=o\big(\ml{E}_n(1+t)\big)\ \ \mbox{for large time}\ \ t\gg1.
	\end{align*}
Furthermore, let us assume $(\alpha_0-\alpha_2)^2(\alpha_1-\alpha_2)>2(\alpha_0+\alpha_2)^2(\alpha_1+\alpha_2)$ or $(\alpha_0+\alpha_2)^2(\alpha_1+\alpha_2)>2(\alpha_0-\alpha_2)^2(\alpha_1-\alpha_2)$ if $n=2$ (no any restriction in other dimensions). Then, it satisfies the following lower bound estimates:
\begin{align*}
	\|\theta(t,\cdot)\|_{L^2}\gtrsim\ml{E}_n(1+t)|P_{\theta_1}|
\end{align*}
for large time $t\gg1$, provided that $P_{\theta_1}\neq0$. The last optimal rates are exactly the same as those for the strongly damped wave equation \cite{Ikehata=2014,Ikehata-Ono=2017} (i.e. the Cauchy problem \eqref{Thermoelastic-Our-Problem} with $\delta>0$ and $\gamma=0$ formally).
\end{theorem}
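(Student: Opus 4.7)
My plan is to mirror the Fourier-analytic framework used for Theorem~\ref{Thm-III-U}: derive a pointwise representation of $\widehat\theta(t,\xi)$ in each of the three zones $\ml{Z}_{\intt}(\varepsilon_0)$, $\ml{Z}_{\bdd}(\varepsilon_0,N_0)$, $\ml{Z}_{\extt}(N_0)$, then pass to $L^2$ via Plancherel. I would begin by Fourier-transforming \eqref{Thermoelastic-Our-Problem} and, using the equivalent system displayed just before Theorem~\ref{Thm-III-U}, reducing to a scalar fourth-order ODE in $t$ for $\widehat\theta$. In the low-frequency zone a refined WKB expansion should produce four characteristic roots of the form $\pm i\nu_j|\xi| - c_j|\xi|^2 + O(|\xi|^3)$ for $j = 1, 2$, with the constants from \eqref{parameters}. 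Matching the four initial data against the corresponding fundamental solutions, and replacing $\widehat\theta_1(\xi)$ by its zero-frequency value $P_{\theta_1}$, should isolate $\widehat\psi(t,\xi)$ as the leading piece of $\chi_{\intt}(\xi)\widehat\theta(t,\xi)$; the remainder collects (i) contributions from $\widehat u_0, \widehat u_1, \widehat\theta_0$ which carry at least one extra factor of $|\xi|$ in front of the same oscillation-diffusion kernels, (ii) the piece proportional to $\widehat\theta_1(\xi) - P_{\theta_1}$ which tends to $0$ at $\xi = 0$ by Riemann-Lebesgue for the $L^1$ datum, and (iii) higher-order WKB errors gaining another factor of $|\xi|$. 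In $\ml{Z}_{\bdd}\cup\ml{Z}_{\extt}$ the dissipation $-\delta\Delta\theta_t$ gives exponential decay, so these contributions are negligible on the scale $\ml{E}_n(1+t)$.

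\textbf{Upper bound.} I would use $\|\theta\|_{L^2} \leqslant \|\psi\|_{L^2} + \|\theta - \psi\|_{L^2}$ and apply Plancherel. After the scaling $\eta = \sqrt{t}\,\xi$, each low-frequency multiplier integral reduces to a one-dimensional form of type $\int_0^\infty s^{n-3}\sin^2(As)\,\mathrm{e}^{-2cs^2}\,ds$ (or a cross analogue with $\sin(A_1 s)\sin(A_2 s)$), with $A = \nu_j\sqrt{t} \to \infty$. The large-$A$ asymptotics are classical: the integrand is integrable at $s = 0$ only when $n \geqslant 3$, yielding the familiar $(1+t)^{1/2-n/4}$ decay; for $n \leqslant 2$ the small-$s$ divergence produces the $(1+t)^{1/2}$ growth when $n = 1$ and the $[\ln(\mathrm{e}+t)]^{1/2}$ growth when $n = 2$, which is precisely $\ml{E}_n(1+t)$. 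The extra factor of $|\xi|$ in the remainder upgrades this into $o(\ml{E}_n(1+t))$ for the error.

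\textbf{Lower bound and main obstacle.} The reverse triangle inequality reduces the task to $\|\psi\|_{L^2} \gtrsim \ml{E}_n(1+t)|P_{\theta_1}|$. I would write $\widehat\psi = \frac{P_{\theta_1}}{2\alpha_2}(A_1\widehat{\ml{M}}_1 + A_2\widehat{\ml{M}}_2)$ with $A_1 = \alpha_2 - \alpha_0$ and $A_2 = \alpha_0 + \alpha_2$, both positive since $\alpha_0^2 - \alpha_2^2 = -4b^2\gamma^2 < 0$, where $\widehat{\ml{M}}_j$ is the standard diffusion-wave symbol $\sin(\nu_j|\xi|t)/(\nu_j|\xi|)\,\mathrm{e}^{-c_j|\xi|^2 t}$. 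Using $2\sin(a)\sin(b) = \cos(a-b) - \cos(a+b)$ and the Frullani-type identity $\int_0^\infty[\cos(Au)-\cos(Bu)]u^{-1}\mathrm{e}^{-2cu^2}\,du \to \ln(B/A)$ as $A,B \to \infty$ with fixed ratio, I would show the cross term in $\|\psi\|_{L^2}^2$ is of strictly lower order than the two diagonal terms (bounded in $n = 2$, and $O(t^{1/2 - n/2})$ for $n \geqslant 3$). The diagonal terms combine with the positive coefficient $A_1^2/\nu_1^2 + A_2^2/\nu_2^2$, giving the lower bound unconditionally for $n = 1$ and $n \geqslant 3$. The hard case will be $n = 2$: both diagonal terms grow logarithmically and the reverse-triangle step performed at the level of $L^2$ norms demands that one single-wave contribution dominate the other by a definite margin, sufficient to absorb the bounded cross interference and the error $\|\theta-\psi\|_{L^2}$. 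Translating the inequality $A_1^2/\nu_1^2 > 2 A_2^2/\nu_2^2$ (or its symmetric counterpart) via $\nu_j^2 = (\alpha_1 \mp \alpha_2)/2$ yields precisely the hypothesis $(\alpha_0-\alpha_2)^2(\alpha_1-\alpha_2) > 2(\alpha_0+\alpha_2)^2(\alpha_1+\alpha_2)$ stated in the theorem. Verifying this non-cancellation is the genuinely new ingredient relative to the single-diffusion-wave analysis of \cite{Ikehata=2014,Ikehata-Ono=2017} and completes the proof.
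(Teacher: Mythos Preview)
Your overall strategy---Fourier decomposition into the three zones, WKB expansion of the characteristic roots to order $|\xi|^2$, identification of $\widehat{\ml{G}}_2\widehat{\theta}_1$ as the leading small-frequency piece, and Plancherel---is exactly the paper's (Propositions~\ref{Prop-small}--\ref{Prop-bdd} together with Proposition~\ref{Prop-Kernel-Optimal-Type-III-2}). The upper-bound and error arguments are correct in outline; one minor imprecision is that the paper bounds the remainder coming from $\widehat{u}_0,\widehat{u}_1,\widehat{\theta}_0$ by $\mathrm{e}^{-c|\xi|^2t}$ directly (no surviving $|\xi|^{-1}$ singularity) rather than by ``one extra $|\xi|$ against the same kernel,'' but the conclusion $o(\ml{E}_n)$ is the same either way.

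For the lower bound your method differs from the paper's. The paper (Lemma~\ref{Lemma-Lower-Bound}) applies the pointwise $\epsilon_0$-triangle inequality $|f-g|^2\geqslant(1-\epsilon_0)|f|^2-\frac{1-\epsilon_0}{\epsilon_0}|g|^2$ inside the frequency integral, restricted to carefully chosen time-dependent sub-intervals of $[0,\varepsilon_0]$; the $n=2$ condition arises from tuning $\epsilon_0$ and the sub-interval so that the dominant single-wave piece beats the other. Your direct expansion of $\|\psi\|_{L^2}^2$ into diagonal plus cross terms is a legitimate alternative and recovers the same $n=2$ condition. There is, however, a genuine gap at $n=1$: your assertion that the cross term is ``strictly lower order'' is false there. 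After scaling $s=|\xi|t$ the cross integral is governed by $\int_0^\infty s^{-2}\sin(\nu_1 s)\sin(\nu_2 s)\,\mathrm{d}s=\tfrac{\pi}{2}\min(\nu_1,\nu_2)$, which is finite and nonzero, so the cross term is exactly of order $t$, the same as each diagonal term. The repair is easy and uses an observation you already made: since $A_1,A_2>0$ and the above integral is positive, the cross contribution $2A_1A_2\langle\widehat{\ml{M}}_1,\widehat{\ml{M}}_2\rangle$ is nonnegative, whence $\|\psi\|_{L^2}^2\geqslant A_1^2\|\widehat{\ml{M}}_1\|_{L^2}^2\gtrsim t$ directly. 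You must make this sign argument explicit for $n=1$ rather than rely on a lower-order claim that does not hold.
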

\begin{remark}
	Combining the derived estimates in Theorem \ref{Thm-III-theta}, the solution $\theta$ satisfies the optimal large time estimates
	\begin{align*}
		\|\theta(t,\cdot)\|_{L^2}\approx \ml{E}_n(1+t)
	\end{align*}
	provided that $P_{\theta_1}\neq0$. It implies the infinite time $L^2$-blowup of $\theta(t,\cdot)$ when $n\leqslant 2$ and the polynomially decay estimates when $n\geqslant 3$, namely, the critical dimension for $\theta$ is $n=2$. It is not surprising that the temperature difference $\theta$ decays faster [resp. grows slower] than the elastic displacement $u$ because of the viscous dissipation $-\delta\Delta\theta_t$ (directly on $\theta$ but indirectly on $u$) in the coupled system \eqref{Thermoelastic-Our-Problem}.
\end{remark}
\begin{remark}
	By subtracting the function $\psi(t,\cdot)$ in the $L^2$ norm, the error estimates in Theorem \ref{Thm-III-theta} hold for any $n\geqslant 1$. Thus, the large time profile of $\theta$ can be explained by $\psi$ which is the double diffusion waves with the weak singularity.
\end{remark}

The last two results have concluded sharp large time behavior for the solutions $u,\theta$ themselves with the usual $L^1$ integrable data (without the operator $|\nabla|^{-1}$), which answer Questions I and II from our introduction. Let us compare our results with those in the previous references.
\begin{itemize}
	\item We claim the existence of $(u,\theta)\in (\ml{C}([0,+\infty),L^2))^{n+1}$ to be a supplement of \cite[Theorem 4.1]{Reissig-Wang=2005} and \cite[Theorem 1.1]{Yang-Wang=2006}. Precisely, we do not require the higher regular data with $s\geqslant 4$ from \cite{Reissig-Wang=2005} in the 1d model, $s\geqslant 2$ from \cite{Yang-Wang=2006} in the 3d model, where the assumptions on $|\nabla|^{-\alpha}\theta_1$ with $\alpha\in(1/2,1]$ mainly from \cite[Lemma 4.1]{Reissig-Wang=2005}  have been dropped also. Our results show the existence of solutions themselves instead of the energy terms. Furthermore, the new optimal growth/decay estimates indicate different large time behavior for $u$ and $\theta$, particularly, the growth phenomena (infinite time $L^2$-blowup) in lower dimensions are of interest.
	\item The large time asymptotic profiles $\varphi$ and $\psi$ for the solutions $u$ and $\theta$, respectively, are described by the double diffusion waves with different degrees of singularities (for small frequencies). For this reason, we discover the important influence from the hyperbolic  (elastic) part of the thermoelastic system \eqref{Thermoelastic-Our-Problem}, instead of the pure parabolic decay effect stated in \cite{Reissig-Wang=2005,Yang-Wang=2006,Jachmann=2008}. These new results did not be found in the literature.
	\item It is worth pointing out that the studies on sharp large time behavior for energy terms are easier than those for solutions themselves due to the weaker singularities for small frequencies and the exponential decay for large frequencies. Concerning the energy terms with usual $H^s\cap L^1$ data (carrying suitable $s\geqslant 0$), by deriving some pointwise estimates for $|\xi|\widehat{u},\widehat{u}_t$ as generalizations of Propositions \ref{Prop-small}-\ref{Prop-bdd}, it is not difficult to get
	\begin{align*}
		\|\nabla u(t,\cdot)\|_{(L^2)^n}+\|u_t(t,\cdot)\|_{(L^2)^n}+\|\theta(t,\cdot)\|_{L^2}\approx \ml{E}_n(1+t)
	\end{align*}
	for large time $t\gg1$, provided that $P_{\theta_1}\neq0$. It drops the assumption on $|\nabla|^{-1}\theta_1$ from \cite{Zhang-Zuazua=2003,Reissig-Wang=2005,Yang-Wang=2006,Jachmann=2008}, e.g. \cite[Theorem 7.1]{Zhang-Zuazua=2003} required $\nabla(-\Delta)^{-1}(\theta_1-\Delta\theta_0+\divv u_1)\in (L^1)^n$. The major tools are singular estimates in Lemmas \ref{Lemma-Upper-Bound}-\ref{Lemma-Improve}, which allow us to treat the singularity $|\xi|^{-\sigma}$ with $\sigma\in\mb{N}_+$ for small frequencies sharply, rather than to transport the singularity $|\xi|^{-\sigma}$ to the initial data $\widehat{\theta}_1$ as references.
	If one considers additional assumptions on $|\nabla|^{-1}\theta_1$, the above energy terms decay polynomially with the optimal rates $t^{-\frac{n}{4}}$ for large time $t\gg1$. Note that these rates exactly coincide with the previous literature \cite[Theorem 7.1]{Zhang-Zuazua=2003} and \cite[Theorem 5.1 with $q=2,p=1$]{Reissig-Wang=2005} and \cite[Theorem 1.5 with $q=2,p=1$]{Yang-Wang=2006}.
\end{itemize}
\begin{remark}\label{Remark-Compare-with-Fourier}
By formally taking $\kappa=0$ in \eqref{Thermoelastic-Our-Problem} and integrating the resultant equation for $\theta$ over $[0,t]$, the type III model is reduced to the classical thermoelastic system with the Fourier law of heat conduction. If the compatibility condition on initial data does not hold, this is a singular limit process as $\kappa\downarrow 0$. In the recent results \cite[Theorem 2.2 and Remark 2.2]{Chen-Takeda=2023}, the authors found the optimal leading term (profile) for $u$ as $t\gg1$ by the single diffusion waves with the weak singularity $|\xi|^{-1}$, and for $\theta$ by the diffusion without singularity. However, replacing the Fourier law ($\delta=0$) by the hyperbolic thermal law from Green-Naghdi ($\delta>0$) in thermoelastic systems, we observe it produces stronger singularities for $u$ and $\theta$.
\end{remark}

\subsection{Results for the thermoelastic system of type II}
\hspace{5mm}Let us denote $\widetilde{u},\widetilde{\theta}$ as the solutions to the thermoelastic system of type II, i.e. the Cauchy problem \eqref{Thermoelastic-Our-Problem} with $\delta=0$, for clarity to avoid repetitions of symbol (with the type III model). Also note that the superscript ``$\widetilde{\quad}$'' is always used for the type II model.
\begin{theorem}\label{Thm-II-U}Let us assume $(\widetilde{u}_0,\widetilde{u}_1)\in(L^2)^n\times (L^2\cap L^1)^n$ and $(\widetilde{\theta}_0,\widetilde{\theta}_1)\in(L^2\cap L^1)\times (L^2\cap L^1)$ if $n\in\{1,2,3,4\}$. The elastic displacement $\widetilde{u}=\widetilde{u}(t,x)$ to the thermoelastic system of type II, i.e. the Cauchy problem \eqref{Thermoelastic-Our-Problem} with $\delta=0$, globally in time exists such that $\widetilde{u}\in(\ml{C}([0,+\infty),L^2))^n$ and satisfies the following upper bound estimates:
\begin{align*}
\|\widetilde{u}(t,\cdot)\|_{(L^2)^n}&\lesssim\ml{D}_n(1+t)\big(\|(\widetilde{u}_0,\widetilde{u}_1)\|_{(L^2)^n\times (L^2\cap L^1)^n}+\|(\widetilde{\theta}_0,\widetilde{\theta}_1)\|_{(L^2\cap L^1)\times (L^2\cap L^1)}\big).
\end{align*}
Furthermore, let us assume $\alpha_1<3\alpha_2$ if $n=4$ (no any restriction in other dimensions). Then, taking $\widetilde{\theta}_1\in L^{1,1}$ additionally, it satisfies the following lower bound estimates:
\begin{align*}
	\|\widetilde{u}(t,\cdot)\|_{(L^2)^n}\gtrsim\ml{D}_n(1+t)|P_{\widetilde{\theta}_1}|
\end{align*}
for large time $t\gg1$, provided that $P_{\widetilde{\theta}_1}\neq0$.
\end{theorem}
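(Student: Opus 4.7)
The plan is to transplant the Fourier-analytic framework developed for Theorem \ref{Thm-III-U} to the dissipation-free setting $\delta=0$. Setting $\delta=0$ in the characteristic polynomial of \eqref{Thermoelastic-Our-Problem} yields four purely imaginary roots $\pm i\nu_1|\xi|,\pm i\nu_2|\xi|$ with $\nu_1,\nu_2$ as in \eqref{parameters}, so the Fourier image $\widehat{\widetilde{u}}(t,\xi)$ is a linear combination of $\cos(\nu_j|\xi|t)$ and $\sin(\nu_j|\xi|t)/(\nu_j|\xi|)$ weighted by explicit $|\xi|$-dependent coefficients and the Fourier transforms of the data. The dangerous contribution is the $\widehat{\widetilde{\theta}}_1$-term, whose coefficient inherits the strong singularity $|\xi|^{-2}$ (multiplied by $i\xi$); this produces the ``double-waves'' kernel obtained from the profile $\varphi$ of Theorem \ref{Thm-III-U} by formally setting $c_1=c_2=0$, namely
\begin{align*}
\widetilde{\ml{M}}(t,\xi):=\frac{\gamma}{\alpha_2}\left(\frac{\sin(\nu_1|\xi|t)}{\nu_1|\xi|}-\frac{\sin(\nu_2|\xi|t)}{\nu_2|\xi|}\right)\frac{i\xi}{|\xi|^2}.
\end{align*}

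For the upper bound I decompose the Fourier space along the three zones $\ml{Z}_{\intt},\ml{Z}_{\bdd},\ml{Z}_{\extt}$. On $\ml{Z}_{\bdd}\cup\ml{Z}_{\extt}$ the absence of exponential damping forces me to put all multipliers in $L^{\infty}$ (the $\sin(\nu_j|\xi|t)/(\nu_j|\xi|)$ factor is uniformly bounded there by $1/(\nu_j\varepsilon_0)$) and apply Plancherel, yielding a time-uniform bound controlled by the $L^2$ norms of $(\widetilde{u}_0,\widetilde{u}_1,\widetilde{\theta}_0,\widetilde{\theta}_1)$. The growth comes entirely from $\ml{Z}_{\intt}$: Taylor expanding $\sin(\nu_j|\xi|t)/(\nu_j|\xi|)=t-\nu_j^2|\xi|^2t^3/6+O(|\xi|^4t^5)$ shows that the leading $t$-contributions cancel exactly in $\widetilde{\ml{M}}$, leaving an $O(|\xi|t^3)$ multiplier on $|\xi|\lesssim t^{-1}$, and dyadic analysis on $t^{-1}\lesssim|\xi|\lesssim\varepsilon_0$ captures the pure oscillatory regime. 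Combining both scales (this is exactly the $c_j\equiv 0$, $\sigma=1$ instance of the singular estimates in Lemmas \ref{Lemma-Upper-Bound}--\ref{Lemma-Improve}) produces the $L^2$-upper bound $\|\widetilde{\ml{M}}(t,\cdot)\|_{(L^2(\ml{Z}_{\intt}))^n}\lesssim\ml{D}_n(1+t)$; the corresponding contributions from $(\widetilde{u}_0,\widetilde{u}_1,\widetilde{\theta}_0)$ carry weaker singularities ($|\xi|^{-1}$, $|\xi|^0$, $|\xi|^{-1}$), hence they are absorbed into the same bound without further restrictions.

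For the lower bound, I use $\widetilde{\theta}_1\in L^{1,1}$ to split $\widehat{\widetilde{\theta}}_1(\xi)=P_{\widetilde{\theta}_1}+r(\xi)$ with $|r(\xi)|\lesssim|\xi|\,\|\widetilde{\theta}_1\|_{L^{1,1}}$, and apply analogous expansions to the other data (whose zeroth-order values are not needed since their singularities are weaker). Via Plancherel on $\ml{Z}_{\intt}$,
\begin{align*}
\|\widetilde{u}(t,\cdot)\|_{(L^2)^n}\gtrsim\big\|\widetilde{\ml{M}}(t,\cdot)P_{\widetilde{\theta}_1}\big\|_{(L^2(\ml{Z}_{\intt}))^n}-\big\|\widetilde{\ml{M}}(t,\cdot)r(\cdot)\big\|_{(L^2)^n}-(\text{other data}),
\end{align*}
and the matching lower bound $\|\widetilde{\ml{M}}(t,\cdot)\|_{(L^2(\ml{Z}_{\intt}))^n}\gtrsim\ml{D}_n(1+t)$ of the double-wave kernel delivers the first term at the sharp rate $\ml{D}_n(1+t)|P_{\widetilde{\theta}_1}|$. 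The remainder is controlled using the same singular lemmas with one extra power of $|\xi|$ coming from $r$ (respectively from the data expansions), and therefore grows/decays strictly slower than $\ml{D}_n$, so it may be absorbed for $t\gg1$. The parametric restriction $\alpha_1<3\alpha_2$ in dimension $n=4$ is exactly what prevents an accidental secondary cancellation between the $\nu_1$- and $\nu_2$-waves in $\widetilde{\ml{M}}$ from killing the $[\ln(\mathrm{e}+t)]^{1/2}$ factor of $\ml{D}_4$.

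The main obstacle is the sharp $L^2$-analysis of the double-wave multiplier $\widetilde{\ml{M}}$ with strong singularity $|\xi|^{-2}$ in the purely oscillatory regime $\delta=0$. Unlike the type III case, where the Gaussian factor $\mathrm{e}^{-c_j|\xi|^2t}$ truncates the relevant frequencies at scale $|\xi|\sim t^{-1/2}$, here the oscillations persist for all $|\xi|$, and only the \emph{interference} between the two different speeds $\nu_1\neq\nu_2$ can tame the singularity at very small $|\xi|$; obtaining the exact prefactors, including the critical logarithms for $n\in\{2,4\}$ and the constraint on $(\alpha_1,\alpha_2)$ at $n=4$, requires a careful dyadic decomposition around the critical frequency $|\xi|\sim t^{-1}$ together with the refined algebraic structure of the coefficients produced by the WKB analysis. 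This is also the reason why the statement is restricted to $n\leqslant 4$: for $n\geqslant 5$ the kernel $\widetilde{\ml{M}}(t,\cdot)$ already belongs to $L^2$ uniformly in $t$, so only a bounded estimate (handled separately) is available without dissipation.
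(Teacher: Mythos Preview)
Your strategy---Fourier representation with purely imaginary roots $\pm i\nu_j|\xi|$, isolation of the double-wave kernel $\widetilde{\ml{M}}=\widehat{\ml{G}}_3$, Taylor cancellation on $\{|\xi|\lesssim t^{-1}\}$ plus oscillatory analysis on $\{t^{-1}\lesssim|\xi|\}$, and the $L^{1,1}$ splitting $\widehat{\widetilde{\theta}}_1=P_{\widetilde{\theta}_1}+O(|\xi|)$ for the lower bound---is exactly the route taken in Section~\ref{Section_type II}.

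One correction, however: you cannot literally invoke Lemmas~\ref{Lemma-Upper-Bound}--\ref{Lemma-Improve} at $c_j\equiv0$, since those are stated and proved only for $c_1,c_2>0$ (the Gaussian factor is used repeatedly, and Lemma~\ref{Lemma-Very-Low-D} even requires $c_1\neq c_2$). The paper therefore does \emph{not} specialize those lemmas but writes out the undamped estimates afresh: Proposition~\ref{Prop-G3} for the upper bound (where the data $\widetilde{\theta}_1$ is kept inside the norm and a further dimension-dependent threshold within $\{|\xi|\geqslant\mu_6/t\}$ balances the $L^1$ and $L^2$ contributions), and Subsection~4.4.1 for the lower bound. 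In the latter, the logarithms for $n\in\{2,4\}$ are not obtained on all of $\ml{Z}_{\intt}$ but by localizing to a carefully chosen window---$\{|\xi|\geqslant\rho_1/t\}$ with an artificial weight $\mathrm{e}^{-|\xi|^2}$ for $n=2$, and $\{1/t\leqslant|\xi|\leqslant 1/\sqrt{t}\}$ for $n=4$---combined with the $\epsilon_0$-triangle inequality \eqref{Triangle-Ineq} and an integration-by-parts to kill the cosine term; this last step is precisely where the hypothesis $\alpha_1<3\alpha_2$ (equivalently $\nu_1^2>2\nu_2^2$ or $\nu_2^2>2\nu_1^2$) is used, not merely to avoid a ``secondary cancellation'' but to make the coefficient in front of $\int r^{-1}\,\mathrm{d}r$ strictly positive. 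Your plan to work uniformly on $\ml{Z}_{\intt}$ would also succeed, but the needed kernel bounds are not contained in the type-III lemmas and must be redone.
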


\begin{theorem}\label{Thm-II-theta} Let us assume $(\widetilde{u}_0,\widetilde{u}_1)\in ({H}^1)^n\times (L^2)^n$ and $(\widetilde{\theta}_0,\widetilde{\theta}_1)\in L^2\times (L^2\cap L^1)$ if $n\in\{1,2\}$. The temperature difference $\widetilde{\theta}=\widetilde{\theta}(t,x)$ to the thermoelastic system of type II, i.e. the Cauchy problem \eqref{Thermoelastic-Our-Problem} with $\delta=0$, globally in time exists such that $\widetilde{\theta}\in\ml{C}([0,+\infty),L^2)$ and satisfies the following upper bound estimates:
	\begin{align*}
		\|\widetilde{\theta}(t,\cdot)\|_{L^2}&\lesssim\ml{E}_n(1+t)\big(\|(\widetilde{u}_0,\widetilde{u}_1)\|_{({H}^1)^n\times (L^2)^n}+\|(\widetilde{\theta}_0,\widetilde{\theta}_1)\|_{L^2\times (L^2\cap L^1)}\big).
	\end{align*}
Furthermore, let us assume $(\alpha_0-\alpha_2)^2(\alpha_1-\alpha_2)>2(\alpha_0+\alpha_2)^2(\alpha_1+\alpha_2)$ or $(\alpha_0+\alpha_2)^2(\alpha_1+\alpha_2)>2(\alpha_0-\alpha_2)^2(\alpha_1-\alpha_2)$ if $n=2$ (no any restriction in $n=1$). Then, taking $\widetilde{\theta}_1\in L^{1,1}$ additionally, it satisfies the following lower bound estimates:
\begin{align*}
	\|\widetilde{\theta}(t,\cdot)\|_{L^2}\gtrsim\ml{E}_n(1+t)|P_{\widetilde{\theta}_1}|
\end{align*}
for large time $t\gg1$, provided that $P_{\widetilde{\theta}_1}\neq0$. The last optimal rates are exactly the same as those for the free wave equation \cite{Ikehata=2023} (i.e. the Cauchy problem \eqref{Thermoelastic-Our-Problem} with $\delta=0$ and $\gamma=0$ formally).
\end{theorem}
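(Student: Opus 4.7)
The plan is to mirror the strategy of Theorems \ref{Thm-III-U}--\ref{Thm-III-theta}: obtain sharp pointwise estimates for $\widehat{\widetilde{\theta}}(t,\xi)$ in the zones $\ml{Z}_{\intt},\ml{Z}_{\bdd},\ml{Z}_{\extt}$, lift them to $L^2$ through the double waves Fourier multiplier analysis (now with $c_j\equiv 0$), and then isolate the $P_{\widetilde{\theta}_1}$ leading term in order to extract the matching lower bound.

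First I would Fourier-transform \eqref{Thermoelastic-Our-Problem} at $\delta=0$ and decouple the system into the scalar ODE $\partial_t^4\widehat{\widetilde{\theta}}+\alpha_1|\xi|^2\partial_t^2\widehat{\widetilde{\theta}}+b^2\kappa|\xi|^4\widehat{\widetilde{\theta}}=0$, whose characteristic roots $\pm i\nu_j|\xi|$ are purely imaginary with the $\nu_j$ of \eqref{parameters}. Hence $\widehat{\widetilde{\theta}}$ is an explicit linear combination of $\cos(\nu_j|\xi|t)$ and $\sin(\nu_j|\xi|t)/(\nu_j|\xi|)$ whose coefficients, rational in $\alpha_0,\alpha_2$, are bounded at $\xi=0$ up to a factor $i\xi\cdot\widehat{\widetilde{u}}_0$ absorbed by the hypothesis $\widetilde{u}_0\in H^1$. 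The only small-frequency singularity is then the weak $|\xi|^{-1}$ produced when the $\sin/|\xi|$ factor meets $\widehat{\widetilde{\theta}}_1$; in $\ml{Z}_{\bdd}\cup\ml{Z}_{\extt}$ the multiplier $1/(\nu_j|\xi|)$ is bounded and every term contributes $O(1)$ to the $L^2$ norm. The upper bound therefore reduces to small-frequency integrals of the form $\int_{|\xi|\leqslant\varepsilon_0}|\xi|^{-2}|\ell_1\sin(\nu_1|\xi|t)-\ell_2\sin(\nu_2|\xi|t)|^2\,d\xi$, which after the substitution $\eta=|\xi|t$ become $t^{2-n}\int_0^{\varepsilon_0 t}\eta^{n-3}|\sin(\nu_j\eta)|^2\,d\eta$ plus an oscillating cross term; this produces $\ml{E}_n(1+t)^2$ (linear growth in $n=1$, logarithmic in $n=2$) via singular multiplier estimates analogous to Lemmas \ref{Lemma-Upper-Bound}--\ref{Lemma-Improve}.

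For the lower bound I would split $\widehat{\widetilde{\theta}}(t,\xi)=\widehat{\psi}(t,\xi)+\widehat{R}(t,\xi)$, where $\widehat{\psi}$ denotes the $c_j=0$ analogue of $\psi$ built from $P_{\widetilde{\theta}_1}$ while $\widehat{R}$ collects the remaining data together with the discrepancy $\widehat{\widetilde{\theta}}_1(\xi)-P_{\widetilde{\theta}_1}$. The hypothesis $\widetilde{\theta}_1\in L^{1,1}$ yields the pointwise bound $|\widehat{\widetilde{\theta}}_1(\xi)-P_{\widetilde{\theta}_1}|\lesssim |\xi|\,\|\widetilde{\theta}_1\|_{L^{1,1}}$, which cancels the $|\xi|^{-1}$ singularity in $R$ and produces $\|R(t,\cdot)\|_{L^2}=o(\ml{E}_n(1+t))$ by the same multiplier analysis. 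It then remains to prove $\|\psi(t,\cdot)\|_{L^2}\gtrsim \ml{E}_n(1+t)|P_{\widetilde{\theta}_1}|$: expanding the square, the cross term $\sin(\nu_1|\xi|t)\sin(\nu_2|\xi|t)=\tfrac12[\cos((\nu_1-\nu_2)|\xi|t)-\cos((\nu_1+\nu_2)|\xi|t)]$ oscillates nondegenerately since $\nu_1\neq\nu_2$ and contributes only a lower-order remainder through a Riemann--Lebesgue-type argument, while the diagonal part becomes a positive linear combination of $(\alpha_0-\alpha_2)^2(\alpha_1-\alpha_2)$ and $(\alpha_0+\alpha_2)^2(\alpha_1+\alpha_2)$.

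The main obstacle is the $n=2$ case, where the dominant behaviour is the logarithmic integral $\int_0^{\varepsilon_0 t}\eta^{-1}\sin^2(\nu_j\eta)\,d\eta\sim\tfrac{1}{2}\ln t$: after combining the two frequencies the coefficient of $\ln t$ is precisely the algebraic expression encoded in the stated dichotomy. The factor $2$ arises when one bounds the oscillating cross integral uniformly (keeping it $O(1)$ rather than $O(\ln t)$), so the hypothesis guarantees that one diagonal contribution strictly exceeds twice the other and prevents the two logarithms from cancelling. Once this positivity is secured for $n=2$ and handled directly for $n=1$ (where both positive diagonal terms simply add together), combining the lower bound on $\|\psi\|_{L^2}$ with the $o(\ml{E}_n)$ remainder estimate closes the proof.
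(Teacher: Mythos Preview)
Your upper-bound argument is essentially the paper's (Propositions~\ref{Prop-type-II-Error} and~\ref{Prop-G4}), though the paper works with a single global pointwise estimate $|\widehat{\widetilde{\theta}}-\widehat{\ml{G}}_4\widehat{\widetilde{\theta}}_1|\lesssim|\xi|\,|\widehat{\widetilde{u}}_0|+|\widehat{\widetilde{u}}_1|+|\widehat{\widetilde{\theta}}_0|$ valid for all $\xi\in\mb{R}^n$ rather than a three-zone decomposition.

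For the lower bound there is a genuine gap and a confusion. The gap: your profile $\widetilde{\psi}$ is \emph{not} in $L^2(\mb{R}^2)$, since $\int_1^{\infty}\sin^2(\nu_j rt)\,r^{-1}\,\mathrm{d}r=+\infty$ for each fixed $t>0$; hence the splitting $\widetilde{\theta}=\widetilde{\psi}+R$ with $\|R\|_{L^2}=o(\ml{E}_n)$ is ill-posed unless you first localize. The paper does so by restricting to $\{|\xi|\leqslant\mu_6/t\}$ for $n=1$ and to $\{1/t\leqslant|\xi|\leqslant1/\sqrt{t}\}$ with weight $\mathrm{e}^{-|\xi|^2}$ for $n=2$. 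The confusion: your explanation of the factor~$2$ is incorrect. The paper does \emph{not} expand the square; it applies the $\epsilon_0$-triangle inequality \eqref{Triangle-Ineq} and then the asymmetric pair $\sin^2=(1-\cos)/2$ on the dominant term versus $\sin^2\leqslant1$ on the subtracted term---\emph{that} asymmetry is the origin of the $2$ in the hypothesis. If instead you localize to $\{|\xi|\leqslant\varepsilon_0\}$ and carry out your expand-the-square route, the cross integral is
\[
\int_0^{\varepsilon_0}\frac{\sin(\nu_1rt)\sin(\nu_2rt)}{r}\,\mathrm{d}r=\frac{1}{2}\int_0^{\varepsilon_0}\frac{\cos((\nu_1{-}\nu_2)rt)-\cos((\nu_1{+}\nu_2)rt)}{r}\,\mathrm{d}r=\frac{1}{2}\ln\frac{\nu_1+\nu_2}{|\nu_1-\nu_2|}+o(1),
\]
so the coefficient of $\ln t$ is the positive \emph{sum} $\tfrac{\ell_1^2}{2\nu_1^2}+\tfrac{\ell_2^2}{2\nu_2^2}$, never a difference; the two logarithms cannot cancel and no dichotomy is required. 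Your method, executed correctly, would actually \emph{remove} the paper's $n=2$ hypothesis rather than explain it.
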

\begin{remark}
The assumption $\widetilde{\theta}_1\in L^{1,1}$ seems not essential. It may be improved by $\widetilde{\theta}_1\in L^1$ via the Lebesgue dominated convergence theorem and $\ml{F}(\widetilde{\theta}_1)(0)=P_{\widetilde{\theta}_1}$ by following \cite[Proposition A.1]{Chen-Takeda=2023}. But one needs to carefully analyze the $L^1$ integrability for control functions.
\end{remark}
\begin{remark}
By combining all derived estimates in Theorems \ref{Thm-II-U} and \ref{Thm-II-theta}, the solutions $\widetilde{u},\widetilde{\theta}$ in low dimensions satisfy the optimal large time estimates
\begin{align*}
\|\widetilde{u}(t,\cdot)\|_{(L^2)^n}\approx \ml{D}_n(1+t)\ \ \mbox{and}\ \ \|\widetilde{\theta}(t,\cdot)\|_{L^2}\approx \ml{E}_n(1+t)
\end{align*}
provided that $P_{\widetilde{\theta}_1}\neq0$. They imply the infinite time $L^2$-blowup of $\widetilde{u}(t,\cdot)$ when $n\leqslant 4$ and $\widetilde{\theta}(t,\cdot)$ when $n\leqslant 2$. In a sense,  the infinite time $L^2$-blowup may express quantitatively failure of the Huygens principle, which is surprising in 3d because $\widetilde{u}$ solves the strictly hyperbolic equation \eqref{Hyperbolic-Eq}.
\end{remark}
\begin{remark} Let us state some influences from the elastic waves part versus the thermal part (from the hyperbolic heat conduction) in the thermoelastic system \eqref{Thermoelastic-Our-Problem} with $\delta>0$ and $\delta=0$.
 In low dimensions (i.e. $n\leqslant 4$ for the elastic displacement; $n\leqslant 2$ for the temperature difference), the elastic waves part plays a decisive role due to the same optimal large time growth rates between $(u,\theta)$ and $(\widetilde{u},\widetilde{\theta})$, correspondingly. In high dimensions, the thermal effect works due to the polynomial decay estimates for the type III model, whereas the decay rates are weakened by the elastic waves effect (e.g. the slower decay rates $t^{1-\frac{n}{4}}$ for $u$ than those for the Gaussian kernel). In conclusion, the critical dimensions are $n=4$ for the elastic displacement and $n=2$ for the temperature difference, respectively, to distinguish their large time growth/decay behavior.
\end{remark}

Up to now there is only the well-posedness for the Cauchy problem \eqref{Thermoelastic-Our-Problem} with $\delta=0$ in 1d as far as we know. Precisely, \cite[Theorem 3.16]{Jachmann=2008} stated the existence result with the initial data $|\nabla|^{-1}\widetilde{\theta}_1\in L^2$ when $n=1$. In our theorems, this unnatural assumption is dropped and we discover the new optimal growth estimates.

To end this part, strongly motivated by $\widehat{\ml{G}}_{3}$ and $\widehat{\ml{G}}_4$ in Subsection \ref{Subsection-Pointwise}, we next introduce the profiles $\widetilde{\varphi}=\widetilde{\varphi}(t,x)\in\mb{R}^n$ and $\widetilde{\psi}=\widetilde{\psi}(t,x)\in\mb{R}$ via Fourier multipliers as follows:
\begin{align*}
	\widetilde{\varphi}&:=\ml{F}^{-1}_{\xi\to x}\left[\frac{\gamma}{\alpha_2}\left(\frac{\sin(\nu_1|\xi|t)}{\nu_1|\xi|}-\frac{\sin(\nu_2|\xi|t)}{\nu_2|\xi|}\right)\frac{i\xi}{|\xi|^2}\right]P_{\widetilde{\theta}_1},\\
	\widetilde{\psi}&:=\ml{F}^{-1}_{\xi\to x}\left[-\frac{1}{2\alpha_2}\left((\alpha_0-\alpha_2)\,\frac{\sin(\nu_1|\xi|t)}{\nu_1|\xi|}-(\alpha_0+\alpha_2)\,\frac{\sin(\nu_2|\xi|t)}{\nu_2|\xi|}\right)
	\right]P_{\widetilde{\theta}_1}.
\end{align*}
The asymptotic profiles satisfy the consistencies $\widetilde{\varphi}\equiv\varphi|_{\delta=0}$ and $\widetilde{\psi}\equiv\psi|_{\delta=0}$, which are quite reasonable since the type III model will turn to the type II model formally as $\delta=0$.
In order to investigate large time profiles, let us define the cut-off functions $\chi_{\mathrm{c},j}(|D|)$ in the Fourier space such that
\begin{align*}
\mathrm{supp}\,\chi_{\mathrm{c},1}(|\xi|)\subset\begin{cases}
[0,\mu_6/t]&\mbox{if}\ \ n=1,3,\\
[\rho_1/t,+\infty)&\mbox{if}\ \ n=2,\\
[1/t,1/\sqrt{t}\,]&\mbox{if}\ \ n=4,
\end{cases}
\ \  \ \ 
\mathrm{supp}\,\chi_{\mathrm{c},2}(|\xi|)\subset\begin{cases}
	[0,\mu_6/t]&\mbox{if}\ \ n=1,\\
[1/t,1/\sqrt{t}\,]&\mbox{if}\ \ n=2,
\end{cases}
\end{align*}
with a small constant $\mu_6>0$ (such that $\max\{\nu_1,\nu_2\}\mu_6\ll 1$) and a fixed constant $\rho_1>0$ (such that $\nu_2<3\pi/(4\rho_1)<\nu_1$).
 In general considering the localized $L^2$ space
 \begin{align*}
 L^2_{\chi,j}:=\left\{f\in\ml{S}':\ \|f\|_{L^2_{\chi,j}}:=\left\|\chi_{\mathrm{c},j}(|D|)\,\mathrm{e}^{\Delta}f\,\right\|_{L^2}\equiv\left\|\chi_{\mathrm{c},j}(|\xi|)\,\mathrm{e}^{-|\xi|^2}\widehat{f}(\xi)\right\|_{L^2}<+\infty  \right\},
 \end{align*}
 we have obtained the following error estimates in the proofs of lower bound estimates from Theorems \ref{Thm-II-U} and \ref{Thm-II-theta} (for example \eqref{Est-16}, \eqref{Est-17}, \eqref{Est-18} for the unknown $\widetilde{u}$):
\begin{align*}
\|\widetilde{u}(t,\cdot)-\widetilde{\varphi}(t,\cdot)\|_{(L^2_{\chi,1})^n}&=o\big(\ml{D}_n(1+t)\big)\ \ \mbox{if}\ \ n\in\{1,2,3,4\},\\
\|\widetilde{\theta}(t,\cdot)-\widetilde{\psi}(t,\cdot)\|_{L^2_{\chi,2}}&=o\big(\ml{E}_n(1+t)\big)\ \ \, \mbox{if}\ \ n\in\{1,2\},
\end{align*}
for large time $t\gg1$, whose time-dependent functions (i.e. their rates) are addressed explicitly in the demonstrations. By subtracting the functions $\widetilde{\varphi}$ and $\widetilde{\psi}$, respectively, the last error estimates with slower growth rates hold in comparison with those in Theorems \ref{Thm-II-U} and \ref{Thm-II-theta}. For this reason, the large time asymptotic profiles of $\widetilde{u}$ and $\widetilde{\theta}$ can be explained by $\widetilde{\varphi}$ and $\widetilde{\psi}$, which are the double waves with different degrees of singularities. To the best of authors' knowledge, the terminology ``double waves'' is the first time to be introduced. By denoting the classical waves (see, for example, \cite{Strichartz=1970,Brenner=1975,Peral=1980,Reissig-Yagdjian=2000,Ebert-Reissig=2018,Ikehata=2023})
\begin{align*}
\widetilde{\ml{W}}_{\nu_k}(t,x):=\ml{W}_{\nu_k,0}(t,x)=\ml{F}_{\xi\to x}^{-1}\left(\frac{\sin(\nu_k|\xi|t)}{\nu_k|\xi|}\right)P_{\widetilde{\theta}_1}
\end{align*}
with the propagation speed $\nu_k$, then the double waves $\widetilde{\varphi}$ is the interaction between $\widetilde{\ml{W}}_{\nu_1}(t,x)$ and $\widetilde{\ml{W}}_{\nu_2}(t,x)$ with the strong singularity, while $\widetilde{\psi}$ is the interaction with the weak singularity. The strong singularity causes the faster growth rates of $\widetilde{u}$ than those of $\widetilde{\theta}$.
\begin{remark}
	By combining Remark \ref{Rem-High-Dimensions} and \eqref{Est-12}, \eqref{Est-10}, one may conclude some bounded estimates  (and the global in time existence in $L^2$) for solutions to the thermoelastic system of type II in high dimensions as follows:
	\begin{align*}
	\|\widetilde{u}(t,\cdot)\|_{(L^2)^n}&\lesssim \|(\widetilde{u}_0,\widetilde{u}_1)\|_{(L^2)^n\times (L^2\cap L^1)^n}+\|(\widetilde{\theta}_0,\widetilde{\theta}_1)\|_{(L^2\cap L^1)\times (L^2\cap L^1)}\ \ \mbox{if}\ \ n\geqslant 5,\\
	\|\widetilde{\theta}(t,\cdot)\|_{L^2}&\lesssim\|(\widetilde{u}_0,\widetilde{u}_1)\|_{(H^1)^n\times (L^2)^n}+\|(\widetilde{\theta}_0,\widetilde{\theta}_1)\|_{L^2\times (L^2\cap L^1)}\ \  \qquad \ \ \ \mbox{if}\ \ n\geqslant 3,
	\end{align*}
for any $t>0$. However, their sharp upper/lower bounds are still not clear.
\end{remark}

\section{Large time behavior for the thermoelastic system of type III}\label{Section_type III}\setcounter{equation}{0}
\subsection{Pretreatment by reductions}
\hspace{5mm}In this section we consider $\delta>0$ in the thermoelastic system \eqref{Thermoelastic-Our-Problem}. To deeply study the solution itself, let us apply the strongly damped wave operator $(\partial_t^2-\kappa\Delta-\delta\Delta\partial_t)$ on \eqref{Thermoelastic-Our-Problem}$_1$ and combine \eqref{Thermoelastic-Our-Problem}$_2$, thanks to $\nabla \divv u=\Delta u$ from the rotational-free condition $\nabla\times u=0$, the vector unknown $u=u(t,x)$ satisfies
\begin{align*}
\begin{cases}
\ml{L}_{\mathrm{Type-III}}(\partial_t,\Delta)u=0,&x\in\mb{R}^n,\ t>0,\\
\partial_t^ju(0,x)=u_j(x)\ \ \mbox{for}\ \ j\in\{0,1,2,3\},&x\in\mb{R}^n,
\end{cases}
\end{align*}
where the scalar partial differential operator (not hyperbolic) is denoted by
\begin{align*}
\ml{L}_{\mathrm{Type-III}}(\partial_t,\Delta):=\partial_t^4-\delta\Delta\partial_t^3-(b^2+\kappa+\gamma^2)\Delta\partial_t^2+b^2\delta\Delta^2\partial_t+b^2\kappa\Delta^2
\end{align*}
and the initial data are $u_0(x),u_1(x)$ defined in the original model as well as
\begin{align*}
u_2(x):=b^2\Delta u_0(x)-\gamma\nabla\theta_0(x),\ \ u_3(x):=b^2\Delta u_1(x)-\gamma\nabla \theta_1(x).
\end{align*}
Similarly, the scalar unknown $\theta=\theta(t,x)$ solves
\begin{align*}
	\begin{cases}
		\ml{L}_{\mathrm{Type-III}}(\partial_t,\Delta)\theta=0,&x\in\mb{R}^n,\ t>0,\\
		\partial_t^j\theta(0,x)=\theta_j(x)\ \ \mbox{for}\ \ j\in\{0,1,2,3\},&x\in\mb{R}^n,
	\end{cases}
\end{align*}
where the initial data are $\theta_0(x),\theta_1(x)$ defined in the original model as well as
\begin{align*}
\theta_2(x)&:=-b^2\gamma\Delta\divv u_0(x)+(\kappa+\gamma^2)\Delta \theta_0(x)+\delta\Delta\theta_1(x),\\
\theta_3(x)&:=-b^2\delta\gamma\Delta^2\divv u_0(x)-b^2\gamma\Delta\divv u_1(x)+(\kappa+\gamma^2)\delta\Delta^2\theta_0(x)+(\kappa+\gamma^2)\Delta\theta_1(x)+\delta^2\Delta^2\theta_1(x).
\end{align*}
In other words, we may introduce the (scalar or vector) unknown $v=v(t,x)$ such that $v=u,\theta$, which satisfies
\begin{align}\label{TypeIII-v}
	\begin{cases}
		\ml{L}_{\mathrm{Type-III}}(\partial_t,\Delta)v=0,&x\in\mb{R}^n,\ t>0,\\
		\partial_t^jv(0,x)=v_j(x)\ \ \mbox{for}\ \ j\in\{0,1,2,3\},&x\in\mb{R}^n.
	\end{cases}
\end{align}
We do not separate our considerations by different dimensions like \cite{Quintanilla-Racke=2003,Reissig-Wang=2005,Yang-Wang=2006,Jachmann=2008}.

Although the solutions $u$ and $\theta$ satisfy the same partial differential equation with the operator $\ml{L}_{\mathrm{Type-III}}(\partial_t,\Delta)$, their qualitative properties are quite different because of the regularities of third and fourth initial data. Roughly speaking, one may expect faster decay (or slower growth) estimates but higher regular initial data for $\theta$ than those for $u$.

\subsection{Asymptotic behavior for the characteristic roots}
\hspace{5mm}We just need to analyze the equation for $v$ here. Let us apply the partial Fourier transform with respect to $x$ for the Cauchy problem \eqref{TypeIII-v}, namely,
\begin{align*}
	\begin{cases}
		\widehat{v}_{tttt}+\delta|\xi|^2\widehat{v}_{ttt}+(b^2+\kappa+\gamma^2)|\xi|^2\widehat{v}_{tt}+b^2\delta|\xi|^4\widehat{v}_t+b^2\kappa|\xi|^4\widehat{v}=0,&\xi\in\mb{R}^n,\ t>0,\\
		\partial_t^j\widehat{v}(0,\xi)=\widehat{v}_j(\xi)\ \ \mbox{for}\ \ j\in\{0,1,2,3\},&\xi\in\mb{R}^n,
	\end{cases}
\end{align*}
whose corresponding characteristic equation is given by (see also \cite[Equation (8.8)]{Zhang-Zuazua=2003})
\begin{align}\label{quartic}
	\lambda^4+\delta|\xi|^2\lambda^3+(b^2+\kappa+\gamma^2)|\xi|^2\lambda^2+b^2\delta|\xi|^4\lambda+b^2\kappa|\xi|^4=0.
\end{align}
The explicit $|\xi|$-dependent roots of this quartic is complicated, but we just need to understand their asymptotic behavior.
\begin{itemize}
	\item \textbf{Small Frequencies $\xi\in\ml{Z}_{\intt}(\varepsilon_0)$}: The discriminant of \eqref{quartic} is 
	\begin{align*}
	\triangle_{\mathrm{Disc}}(|\xi|)=16b^2\kappa\big((b^2-\kappa)^2+\gamma^4+2\kappa\gamma^2+2b^2\gamma^2 \big)^2|\xi|^{12}+O(|\xi|^{14})>0
	\end{align*}
and the additional factor is $P_{\mathrm{Disc}}(|\xi|)=8(b^2+\kappa+\gamma^2)|\xi|^2-3\delta^2|\xi|^4>0$ for $\xi\in\ml{Z}_{\intt}(\varepsilon_0)$. It means that there are two pairs of distinct (non-real) complex conjugate roots $\lambda_{1,2}$ and $\lambda_{3,4}$ to the quartic \eqref{quartic}. Via asymptotic expansions to these roots for small frequencies $|\xi|\leqslant \varepsilon_0\ll 1$, the pairwise distinct characteristic roots 
\begin{align*}
\lambda_{1,2}=\lambda_{\mathrm{R}}^{(1)}\pm i\lambda_{\mathrm{I}}^{(1)}\ \ \mbox{and}\ \ \lambda_{3,4}=\lambda_{\mathrm{R}}^{(2)}\pm i\lambda_{\mathrm{I}}^{(2)}
\end{align*}
can be expressed according to
\begin{align*}
\lambda_{\mathrm{R}}^{(1)}&=-c_2|\xi|^2+O(|\xi|^3),\ \  \lambda_{\mathrm{I}}^{(1)}=\nu_2|\xi|+O(|\xi|^3),\\
\lambda_{\mathrm{R}}^{(2)}&=-c_1|\xi|^2+O(|\xi|^3),\ \ \lambda_{\mathrm{I}}^{(2)}=\nu_1|\xi|+O(|\xi|^3),
\end{align*}
where we should recall the notations in \eqref{parameters} that $c_1,c_2,\nu_1,\nu_2>0$.

	\item \textbf{Large Frequencies $\xi\in\ml{Z}_{\extt}(N_0)$}: The discriminant of \eqref{quartic} is 
	\begin{align*}
		\triangle_{\mathrm{Disc}}(|\xi|)=-4b^6\delta^6|\xi|^{18}+O(|\xi|^{16})<0
	\end{align*}
for $\xi\in\ml{Z}_{\extt}(N_0)$. It means that there are two distinct real roots $\lambda_1,\lambda_2$ and a pair of (non-real) complex conjugate roots $\lambda_{3,4}$. Via asymptotic expansions to these roots for large frequencies $|\xi|\geqslant N_0\gg1$, the pairwise distinct characteristic roots can be expressed by
\begin{align*}
\lambda_1=-\delta|\xi|^2+O(|\xi|),\ \ \lambda_2=-\frac{\kappa}{\delta}+O(|\xi|^{-1}),\ \ \lambda_{3,4}=\lambda_{\mathrm{R}}\pm i\lambda_{\mathrm{I}}
\end{align*}
with
\begin{align*}
\lambda_{\mathrm{R}}=-\frac{\gamma^2}{2\delta}+O(|\xi|^{-1}),\ \ \lambda_{\mathrm{I}}=b|\xi|+O(|\xi|^{-1}).
\end{align*}
	\item \textbf{Bounded Frequencies $\xi\in\ml{Z}_{\bdd}(\varepsilon_0,N_0)$}: Let us set $\lambda=i\mu$ with $\mu\in\mb{R}\backslash\{0\}$ for a contradiction. Then, the non-trivial number $\mu=\mu(|\xi|)$ fulfills
	\begin{align*}
		\begin{cases}
		\mu^4-(b^2+\kappa+\gamma^2)|\xi|^2\mu^2+b^2\kappa|\xi|^4=0,\\
	i\delta|\xi|^2(\mu^2-b^2|\xi|^2)\mu=0,	
		\end{cases}\Rightarrow\ \ b^2\gamma^2|\xi|^4=0.
	\end{align*}
It yields a contradiction immediately due to $b^2\gamma^2|\xi|^4>0$ for $\xi\in\ml{Z}_{\mathrm{bdd}}(\varepsilon_0,N_0)$. Namely, there is no pure imaginary root to the quartic \eqref{quartic}. From $\mathrm{Re}\,\lambda_j<0$ for $\xi\in\ml{Z}_{\intt}(\varepsilon_0)\cup\ml{Z}_{\extt}(N_0)$ and any $j\in\{1,2,3,4\}$, we are able to claim $\mathrm{Re}\,\lambda_j<0$ for any $\xi\in\mb{R}^n$. A similar fact was proposed by \cite[Proposition 3.7]{Yang-Wang=2006} in a matrix sense.
\end{itemize}

\subsection{Pointwise estimates in the Fourier space}\label{Subsection-Pointwise-Estimates}
\hspace{5mm}For small frequencies, there are two pairs of distinct conjugate characteristic roots, which allows us to employ the representation \eqref{Rep-two-pairs} with $\widehat{v}=\widehat{u},\widehat{\theta}$. Before stating our refined estimates, let us introduce the crucial kernels
\begin{align*}
\widehat{\ml{G}}_0&:=\left(\cos(\nu_1|\xi|t)\,\mathrm{e}^{-c_1|\xi|^2t}-\cos(\nu_2|\xi|t)\,\mathrm{e}^{-c_2|\xi|^2t}\right)\frac{1}{|\xi|},\\
\widehat{\ml{G}}_1&:=\frac{\gamma}{\alpha_2}\left(\frac{\sin(\nu_1|\xi|t)}{\nu_1|\xi|}\,\mathrm{e}^{-c_1|\xi|^2t}-\frac{\sin(\nu_2|\xi|t)}{\nu_2|\xi|}\,\mathrm{e}^{-c_2|\xi|^2t}\right)\frac{i\xi}{|\xi|^2},\\
\widehat{\ml{G}}_2&:=-\frac{1}{2\alpha_2}\left((\alpha_0-\alpha_2)\,\frac{\sin(\nu_1|\xi|t)}{\nu_1|\xi|}\,\mathrm{e}^{-c_1|\xi|^2t}-(\alpha_0+\alpha_2)\,\frac{\sin(\nu_2|\xi|t)}{\nu_2|\xi|}\,\mathrm{e}^{-c_2|\xi|^2t}\right).
\end{align*} 
Note that $\widehat{\ml{G}}_1=(\widehat{\ml{G}}_1^1,\dots,\widehat{\ml{G}}_1^n)\in\mb{R}^n$ and $\widehat{\ml{G}}_0,\widehat{\ml{G}}_2\in\mb{R}$.

 Roughly speaking, they are the double diffusion waves kernels consisting of two diffusion waves with different propagation speeds $\nu_1\neq\nu_2$ and different diffusion coefficients $c_1\neq c_2$. It seems that $\widehat{\ml{G}}_1$ has a stronger singularity $|\xi|^{-2}$ than those with $|\xi|^{-1}$ for $\widehat{\ml{G}}_0$ and $\widehat{\ml{G}}_2$ as $\xi\in\ml{Z}_{\intt}(\varepsilon_0)$. For briefness, let us denote
\begin{align*}
\widehat{\Xi}_0:=|\widehat{u}_0|+|\widehat{u}_1|+|\widehat{\theta}_0|+|\widehat{\theta}_1|.
\end{align*}
\begin{prop}\label{Prop-small}
Let $\xi\in\ml{Z}_{\intt}(\varepsilon_0)$. The solutions $\widehat{u},\widehat{\theta}$ satisfy the following sharp pointwise estimates in the Fourier space:  
\begin{align}
\chi_{\intt}(\xi)|\widehat{u}|&\lesssim\chi_{\intt}(\xi) \left[\left(1+|\xi|t+\frac{|\sin(\widetilde{c}|\xi|t)|}{|\xi|}\right)\mathrm{e}^{-c|\xi|^2t}+|\widehat{\ml{G}}_0|+|\widehat{\ml{G}}_1|\right]\widehat{\Xi}_0,\label{Est-06}\\[0.5em]
\chi_{\intt}(\xi)|\widehat{u}-\widehat{\ml{G}}_1\widehat{\theta}_1|&\lesssim \chi_{\intt}(\xi)\left[\left(1+|\xi|t+\frac{|\sin(\widetilde{c}|\xi|t)|}{|\xi|}\right)\mathrm{e}^{-c|\xi|^2t}+|\widehat{\ml{G}}_0|\right]\widehat{\Xi}_0,\label{Est-05}
\end{align}
with a positive constant $\widetilde{c}$, and
\begin{align*}
\chi_{\intt}(\xi)|\widehat{\theta}|&\lesssim\chi_{\intt}(\xi)\left(|\widehat{\ml{G}}_2|+\mathrm{e}^{-c|\xi|^2t}\right)\widehat{\Xi}_0,\\
\chi_{\intt}(\xi)|\widehat{\theta}-\widehat{\ml{G}}_2\widehat{\theta}_1|&\lesssim \chi_{\intt}(\xi)\,\mathrm{e}^{-c|\xi|^2t}\,\widehat{\Xi}_0.
\end{align*}
\end{prop}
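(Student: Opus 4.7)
The plan is to start from the representation \eqref{Rep-two-pairs}, which expresses $\widehat{v}(t,\xi)$ as $\sum_{j=1}^{4}c_j(\xi)\,e^{\lambda_j t}$ where each $c_j$ is a Vandermonde-type combination of $(\widehat{v}_0,\widehat{v}_1,\widehat{v}_2,\widehat{v}_3)$ with denominator $\prod_{k\neq j}(\lambda_j-\lambda_k)$. Pairing the conjugate roots $\lambda_{1,2},\lambda_{3,4}$ and applying the small-frequency expansions $\lambda_{\mathrm{R}}^{(j)}=-c_j|\xi|^2+O(|\xi|^3)$, $\lambda_{\mathrm{I}}^{(j)}=\nu_j|\xi|+O(|\xi|^3)$, this reorganises into sums of the four building blocks $e^{-c_j|\xi|^2 t}\cos(\nu_j|\xi|t)$ and $\frac{\sin(\nu_j|\xi|t)}{\nu_j|\xi|}\,e^{-c_j|\xi|^2 t}$ for $j=1,2$, modulo remainders of the form $\frac{|\sin(\widetilde{c}|\xi|t)|}{|\xi|}\,e^{-c|\xi|^2 t}$ produced by the $O(|\xi|^3)$ corrections to $\nu_j|\xi|$ and $c_j|\xi|^2$.

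Since each pairwise difference $\lambda_j-\lambda_k$ is of order $|\xi|$, the $\widehat{v}_m$-coefficient inside $c_j$ carries at worst a $|\xi|^{-m}$ singularity, the most delicate being $|\xi|^{-3}$ for the $\widehat{v}_3$-coefficient. For $v=u$, I would substitute $\widehat{u}_2=-b^2|\xi|^2\widehat{u}_0-i\gamma\xi\,\widehat{\theta}_0$ and $\widehat{u}_3=-b^2|\xi|^2\widehat{u}_1-i\gamma\xi\,\widehat{\theta}_1$: each factor $|\xi|^2$ in these expressions cancels two powers of the singular denominator, so the only residual $|\xi|^{-2}$ singularity is the one attached to $i\xi\,\widehat{\theta}_1$ inside $\widehat{u}_3$, corresponding to the $i\xi/|\xi|^2$ factor in $\widehat{\ml{G}}_1$; the leftover $|\xi|^{-1}$ contributions from $\widehat{u}_1$, $\widehat{\theta}_0$, and the $-b^2|\xi|^2\widehat{u}_1$-component of $\widehat{u}_3$ are all of $\widehat{\ml{G}}_0$-type. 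The explicit prefactor $\gamma/\alpha_2$ in $\widehat{\ml{G}}_1$ (and the prefactors $(\alpha_0\mp\alpha_2)/(2\alpha_2)$ in $\widehat{\ml{G}}_2$ for the $v=\theta$ analysis) should emerge from the Vandermonde inversion after using the algebraic identities $\nu_1^2+\nu_2^2=\alpha_1$, $\nu_1^2-\nu_2^2=\alpha_2$, $\nu_1^2\nu_2^2=b^2\kappa$. The corresponding computation for $v=\theta$ is easier because every component of $\widehat{\theta}_2,\widehat{\theta}_3$ carries at least one factor $|\xi|^2$, downgrading $|\xi|^{-3}$ all the way to $|\xi|^{-1}$ and leaving $\widehat{\ml{G}}_2\widehat{\theta}_1$ as the only singular piece.

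Once this bookkeeping is set, \eqref{Est-06} follows by adding the singular $\widehat{\ml{G}}_1\widehat{\theta}_1$, the $\widehat{\ml{G}}_0$-type pieces, and a $(1+|\xi|t)e^{-c|\xi|^2 t}\widehat{\Xi}_0$-bounded remainder coming from the non-singular data and from the $O(|\xi|^3)$ corrections to the phases; \eqref{Est-05} is then \eqref{Est-06} with the $\widehat{\ml{G}}_1\widehat{\theta}_1$ piece already subtracted, and the two estimates for $\widehat{\theta}$ are obtained identically after peeling off $\widehat{\ml{G}}_2\widehat{\theta}_1$. The main obstacle is the bookkeeping that guarantees the constant prefactors of the leading profiles are \emph{exactly} the ones in $\widehat{\ml{G}}_1$ and $\widehat{\ml{G}}_2$ at principal order, and that the mismatch between the exact $c_j(\xi)$ and these constant prefactors provides the extra power of $|\xi|$ required to downgrade the residual singularities. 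A subsidiary technicality is the control of $\sin((\nu_j|\xi|+O(|\xi|^3))t)-\sin(\nu_j|\xi|t)$ and of $e^{-(c_j|\xi|^2+O(|\xi|^3))t}-e^{-c_j|\xi|^2 t}$, whose $O(|\xi|^3 t)$ errors are absorbed using $|\xi|^3 t\,e^{-c|\xi|^2 t}\lesssim |\xi|\,e^{-c'|\xi|^2 t}$; this is precisely what motivates the appearance of $\frac{|\sin(\widetilde{c}|\xi|t)|}{|\xi|}\,e^{-c|\xi|^2 t}$ in the upper bounds.
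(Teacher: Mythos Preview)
Your plan is correct and matches the paper's own proof: both start from the Vandermonde representation \eqref{Rep-two-pairs}, expand the determinants (equivalently, the Lagrange coefficients $c_j$) to leading order in $|\xi|$, identify $\widehat{\ml{G}}_1\widehat{\theta}_1$ and $\widehat{\ml{G}}_2\widehat{\theta}_1$ as the dominant singular pieces, and control the phase/exponent corrections by the mean value theorem to produce the $|\xi|^3t\,\mathrm{e}^{-c|\xi|^2t}$-type remainders. One small labeling slip: the $|\xi|^{-1}$ contributions from $\widehat{u}_1$ (and from the $-b^2|\xi|^2\widehat{u}_1$ piece of $\widehat{u}_3$) are \emph{sine}-type, not $\widehat{\ml{G}}_0$-type --- in the paper's organization $\widehat{\ml{G}}_0$ arises only from the cosine block acting on $\widehat{\theta}_0$, while the $|\xi|^{-1}$ sine residuals are absorbed into the $\frac{|\sin(\widetilde{c}|\xi|t)|}{|\xi|}\,\mathrm{e}^{-c|\xi|^2t}$ term --- but since both pieces appear in the right-hand side of \eqref{Est-05} this does not affect the argument.
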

\begin{proof}
Let us firstly consider $\widehat{v}=\widehat{u}$ in the representation \eqref{Rep-two-pairs}, in which
\begin{align*}
\chi_{\intt}(\xi)\det(\mb{V})&= \chi_{\intt}(\xi)\left(-4\alpha_2^2\nu_1\nu_2|\xi|^6+O(|\xi|^7) \right),
\end{align*}
and
\begin{align*} 
\chi_{\intt}(\xi)[\det(\mb{V}_1^{(1)})+\det(\mb{V}_1^{(2)})]&=\chi_{\intt}(\xi)\left(-4i\gamma\xi\lambda_{\mathrm{I}}^{(1)}\lambda_{\mathrm{I}}^{(2)}\left[(\lambda_{\mathrm{I}}^{(1)})^2-(\lambda_{\mathrm{I}}^{(2)})^2\right]\widehat{\theta}_0+O(|\xi|^6;\widehat{\Xi}_0)\right),\\
\chi_{\intt}(\xi)[\det(\mb{V}_2^{(1)})+\det(\mb{V}_2^{(2)})]&=\chi_{\intt}(\xi)\left(4i\gamma\xi\lambda_{\mathrm{I}}^{(1)}\lambda_{\mathrm{I}}^{(2)}\left[(\lambda_{\mathrm{I}}^{(1)})^2-(\lambda_{\mathrm{I}}^{(2)})^2\right]\widehat{\theta}_0+O(|\xi|^6;\widehat{\Xi}_0)\right),
\end{align*}
whose dominant terms are of opposite sign, as well as
\begin{align*}
\chi_{\intt}(\xi)[\det(\mb{V}_1^{(1)})-\det(\mb{V}_1^{(2)})]&=\chi_{\intt}(\xi)\left(4\gamma\alpha_2\nu_1\xi|\xi|^3\widehat{\theta}_1+O(|\xi|^5;\widehat{\Xi}_0)\right),\\
\chi_{\intt}(\xi)[\det(\mb{V}_2^{(1)})-\det(\mb{V}_2^{(2)})]&=\chi_{\intt}(\xi)\left(-4\gamma\alpha_2\nu_2\xi|\xi|^3\widehat{\theta}_1+O(|\xi|^5;\widehat{\Xi}_0)\right).
\end{align*}
It should be emphasized that there are some cancellations in the sum $\det(\mb{V}_k^{(1)})+\det(\mb{V}_k^{(2)})$ for $k\in\{1,2\}$. Precisely, the $\xi|\xi|^3$-terms are canceled in the sum, which leads to $\det(\mb{V}_k^{(1)})+\det(\mb{V}_k^{(2)})\sim |\xi|^5$ but $\det(\mb{V}_k^{(1)})-\det(\mb{V}_k^{(2)})\sim \xi|\xi|^3$ as small frequencies. 

Recalling the representation \eqref{Rep-two-pairs} in our case
\begin{align*}
	\widehat{u}&=\sum\limits_{k=1,2}\left[\cos(\lambda_{\mathrm{I}}^{(k)}t)\,\frac{\det(\mb{V}_k^{(1)})+\det(\mb{V}_k^{(2)})}{\det(\mb{V})}\,\mathrm{e}^{\lambda_{\mathrm{R}}^{(k)}t}\right]+i\sum\limits_{k=1,2}\left[\sin(\lambda_{\mathrm{I}}^{(k)}t)\,\frac{\det(\mb{V}_k^{(1)})-\det(\mb{V}_k^{(2)})}{\det(\mb{V})}\,\mathrm{e}^{\lambda_{\mathrm{R}}^{(k)}t}\right],
\end{align*}
its dominant part for small frequencies seems to be generated by the second component of last representation (the coefficients of sine functions are of $|\xi|^{-2}$ order; but those of cosine functions are of $|\xi|^{-1}$ order) whose approximation is given by $\widehat{\ml{G}}_1\widehat{\theta}_1$. 

We next verify the last statement from the sine part, i.e. the second one of representation of $\widehat{u}$, firstly. The last asymptotic expressions of determinants imply
\begin{align*}
\chi_{\intt}(\xi)\left|\frac{\det(\mb{V}_1^{(1)})-\det(\mb{V}_1^{(2)})}{\det(\mb{V})}-\frac{4\gamma\alpha_2\nu_1\xi|\xi|^3}{-4\alpha_2^2\nu_1\nu_2|\xi|^6}\,\widehat{\theta}_1\right|&\lesssim\chi_{\intt}(\xi)|\xi|^{-1}\,\widehat{\Xi}_0,\\
\chi_{\intt}(\xi)\left|\frac{\det(\mb{V}_2^{(1)})-\det(\mb{V}_2^{(2)})}{\det(\mb{V})}-\frac{-4\gamma\alpha_2\nu_2\xi|\xi|^3}{-4\alpha_2^2\nu_1\nu_2|\xi|^6}\,\widehat{\theta}_1\right|&\lesssim\chi_{\intt}(\xi)|\xi|^{-1}\,\widehat{\Xi}_0,
\end{align*}
which address the leading terms, respectively,
\begin{align*}
-\frac{\gamma\xi}{\alpha_2\nu_2|\xi|^3}\,\widehat{\theta}_1\ \ \mbox{and}\ \ \frac{\gamma\xi}{\alpha_2\nu_1|\xi|^3}\,\widehat{\theta}_1.
\end{align*}
Moreover, from $\lambda_{\mathrm{R}}^{(1)}=-c_2|\xi|^2+O(|\xi|^3)$ and $\lambda_{\mathrm{R}}^{(2)}=-c_1|\xi|^2+O(|\xi|^3)$  we find that
\begin{align}\label{Est-09}
\chi_{\intt}(\xi)\left|\mathrm{e}^{\lambda_{\mathrm{R}}^{(1)}t}-\mathrm{e}^{-c_2|\xi|^2t}\right|&=\chi_{\intt}(\xi)\,\mathrm{e}^{-c_1|\xi|^2t}\left|\mathrm{e}^{O(|\xi|^3)\,t}-1\right|\notag\\
&=\chi_{\intt}(\xi)\,O(|\xi|^3)\,t\,\mathrm{e}^{-c_1|\xi|^2t}\int_0^1\mathrm{e}^{O(|\xi|^3)\,t\tau}\,\mathrm{d}\tau\notag\\
&\lesssim \chi_{\intt}(\xi)|\xi|^3t\,\mathrm{e}^{-c|\xi|^2t},
\end{align}
and similarly
\begin{align*}
\chi_{\intt}(\xi)\left|\mathrm{e}^{\lambda_{\mathrm{R}}^{(2)}t}-\mathrm{e}^{-c_1|\xi|^2t}\right|\lesssim\chi_{\intt}(\xi)|\xi|^3t\,\mathrm{e}^{-c|\xi|^2t},
\end{align*}
which address the leading terms, respectively, $\mathrm{e}^{-c_2|\xi|^2t}$ and $\mathrm{e}^{-c_1|\xi|^2t}$. Thanks to the mean value theorem and the boundedness of cosine function, one arrives at
\begin{align*}
\chi_{\intt}(\xi)\left(|\sin(\lambda_{\mathrm{I}}^{(1)}t)-\sin(\nu_2|\xi|t)|+|\sin(\lambda_{\mathrm{I}}^{(2)}t)-\sin(\nu_1|\xi|t)|\right)&\lesssim \chi_{\intt}(\xi)|\xi|^3t,
\end{align*}
which addresses the leading terms, respectively, $\sin(\nu_2|\xi|t)$ and $\sin(\nu_1|\xi|t)$.
Collecting the above-mentioned leading terms, the dominant term $\widehat{\ml{G}}_1\widehat{\theta}_1$ arises. Making use of last error estimates associated with the standard triangle inequality, one concludes that
\begin{align*}
\chi_{\intt}(\xi)\left|i\sum\limits_{k=1,2}\left[\sin(\lambda_{\mathrm{I}}^{(k)}t)\,\frac{\det(\mb{V}_k^{(1)})-\det(\mb{V}_k^{(2)})}{\det(\mb{V})}\,\mathrm{e}^{\lambda_{\mathrm{R}}^{(k)}t}\right]-\widehat{\ml{G}}_1\widehat{\theta}_1\right|\lesssim \chi_{\intt}(\xi)\left(|\xi|t+\frac{|\sin(\widetilde{c}|\xi|t)|}{|\xi|}\right)\mathrm{e}^{-c|\xi|^2t}\,\widehat{\Xi}_0
\end{align*}
with a suitable constant $\widetilde{c}=\widetilde{c}(\nu_1,\nu_2)>0$.

  For another, thanks to the opposite sign of dominant terms between $\det(\mb{V}_1^{(1)})+\det(\mb{V}_1^{(2)})$ and $\det(\mb{V}_2^{(1)})+\det(\mb{V}_2^{(2)})$, by the same approach as the above we discover the leading term for the first component of last representation of $\widehat{u}$, precisely,
  \begin{align*}
  	\frac{i\gamma}{\alpha_2}\left(\cos(\nu_1|\xi|t)\,\mathrm{e}^{-c_1|\xi|^2t}-\cos(\nu_2|\xi|t)\,\mathrm{e}^{-c_2|\xi|^2t}\right)\frac{\xi}{|\xi|^2}\,\widehat{\theta}_0=\frac{i\gamma\xi}{\alpha_2|\xi|}\,\widehat{\ml{G}}_0\widehat{\theta}_0.
  \end{align*}
As a consequence, its estimate can be decomposed into two parts
\begin{align*}
\chi_{\intt}(\xi)\left|\sum\limits_{k=1,2}\left[\cos(\lambda_{\mathrm{I}}^{(k)}t)\,\frac{\det(\mb{V}_k^{(1)})+\det(\mb{V}_k^{(2)})}{\det(\mb{V})}\,\mathrm{e}^{\lambda_{\mathrm{R}}^{(k)}t}\right]\right|\lesssim\chi_{\intt}(\xi)\left(|\widehat{\ml{G}}_0|\,|\widehat{\theta}_0|+\mathrm{e}^{-c|\xi|^2t}\,\widehat{\Xi}_0\right),
\end{align*}
whose second term on the right-hand side is benefited from the error estimates
\begin{align*}
	\chi_{\intt}(\xi)\left(|\cos(\lambda_{\mathrm{I}}^{(1)}t)-\cos(\nu_2|\xi|t)|+|\cos(\lambda_{\mathrm{I}}^{(2)}t)-\cos(\nu_1|\xi|t)|\right)&\lesssim \chi_{\intt}(\xi)|\xi|^3t
\end{align*}
and \eqref{Est-09} which produce the additional factor $|\xi|^3t$ (similarly to the sine part) to compensate the singularity $|\xi|^{-1}$ for small frequencies. Remark that
\begin{align*}
\chi_{\intt}(\xi)|\xi|^3t|\widehat{\ml{G}}_0|\lesssim \chi_{\intt}(\xi)\,\mathrm{e}^{-c|\xi|^2t}.
\end{align*}
Combining all derived estimates, the proof of \eqref{Est-05} is completed, which turns to \eqref{Est-06} immediately via the triangle inequality.

Secondly, by following the same way as the above, the representation \eqref{Rep-two-pairs} for $\widehat{v}=\widehat{\theta}$ holds with the following facts:
\begin{align*}
\chi_{\intt}(\xi)|\det(\mb{V}_k^{(1)})+\det(\mb{V}_k^{(2)})|&\lesssim\chi_{\intt}(\xi)|\xi|^6\,\widehat{\Xi}_0\ \ \mbox{for}\ \ k\in\{1,2\},
\end{align*}
and
\begin{align*}
\chi_{\intt}(\xi)[\det(\mb{V}_1^{(1)})-\det(\mb{V}_1^{(2)})]&=\chi_{\intt}(\xi)\left[2i\nu_1(\alpha_2+\alpha_0)\alpha_2|\xi|^5\widehat{\theta}_1+O(|\xi|^6;\widehat{\Xi}_0)\right],\\
\chi_{\intt}(\xi)[\det(\mb{V}_2^{(1)})-\det(\mb{V}_2^{(2)})]&=\chi_{\intt}(\xi)\left[2i\nu_2(\alpha_2-\alpha_0)\alpha_2|\xi|^5\widehat{\theta}_1+O(|\xi|^6;\widehat{\Xi}_0)\right].
\end{align*}
Hence,
\begin{align*}
	\chi_{\intt}(\xi)\left|\frac{\det(\mb{V}_1^{(1)})-\det(\mb{V}_1^{(2)})}{\det(\mb{V})}-\frac{2i\nu_1(\alpha_2+\alpha_0)\alpha_2|\xi|^5}{-4\alpha_2^2\nu_1\nu_2|\xi|^6}\,\widehat{\theta}_1\right|&\lesssim\chi_{\intt}(\xi)\,\widehat{\Xi}_0,\\
	\chi_{\intt}(\xi)\left|\frac{\det(\mb{V}_2^{(1)})-\det(\mb{V}_2^{(2)})}{\det(\mb{V})}-\frac{2i\nu_2(\alpha_2-\alpha_0)\alpha_2|\xi|^5}{-4\alpha_2^2\nu_1\nu_2|\xi|^6}\,\widehat{\theta}_1\right|&\lesssim\chi_{\intt}(\xi)\,\widehat{\Xi}_0.
\end{align*}
It implies that the leading term is understood by
\begin{align*}
i\sum\limits_{k=1,2}\left[\sin(\lambda_{\mathrm{I}}^{(k)}t)\,\frac{\det(\mb{V}_k^{(1)})-\det(\mb{V}_k^{(2)})}{\det(\mb{V})}\,\mathrm{e}^{\lambda_{\mathrm{R}}^{(k)}t}\right]\sim\widehat{\ml{G}}_2\widehat{\theta}_1.
\end{align*}
Then, we are able to get the desired estimates for $\widehat{\theta}$ by following the same philosophy as those for $\widehat{u}$. Our proof is completed.
\end{proof}

For large frequencies, there are a pair of conjugate roots and two distinct real roots, which allows us to employ the representation \eqref{Rep-one-pair} with $\widehat{v}=\widehat{u},\widehat{\theta}$. Different from the refined estimates for small frequencies, there is no any singularity for $\xi\in\ml{Z}_{\extt}(N_0)$. Therefore, we just focus on some exponential decay estimates with the regular Cauchy data.

\begin{prop}\label{Prop-large}
	Let $\xi\in\ml{Z}_{\extt}(N_0)$. The solutions $\widehat{u},\widehat{\theta}$ satisfy the following pointwise estimates in the Fourier space: 
	\begin{align*}
		\chi_{\extt}(\xi)|\widehat{u}|&\lesssim\chi_{\extt}(\xi)\,\mathrm{e}^{-ct}\left(|\widehat{u}_0|+|\xi|^{-1}|\widehat{u}_1|+|\xi|^{-1}|\widehat{\theta}_0|+|\xi|^{-3}|\widehat{\theta}_1|\right),
	\end{align*}
	and
	\begin{align*}
		\chi_{\extt}(\xi)|\widehat{\theta}|&\lesssim\chi_{\extt}(\xi)\,\mathrm{e}^{-ct}\left(|\xi|\,|\widehat{u}_0|+|\xi|^{-1}|\widehat{u}_1|+|\widehat{\theta}_0|+|\widehat{\theta}_1|\right).
	\end{align*}
\end{prop}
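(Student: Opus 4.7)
The plan parallels the proof of Proposition \ref{Prop-small}: apply the representation \eqref{Rep-one-pair} associated with two distinct real roots plus one conjugate complex pair, and then exploit Cramer's rule via the determinants $\det(\mb{V})$ and $\det(\mb{V}_k^{(j)})$ of the associated matrix. Since no singularity is present for large frequencies, the argument reduces to extracting a uniform exponential decay factor and balancing powers of $|\xi|$.

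First, from the asymptotic expansions of characteristic roots derived above, on $\ml{Z}_{\extt}(N_0)$ with $N_0 \gg 1$ sufficiently large we have $\mathrm{Re}\,\lambda_k \leqslant -c < 0$ uniformly in $k \in \{1,2,3,4\}$ and $|\xi|$, since
\begin{align*}
\mathrm{Re}\,\lambda_1 \sim -\delta|\xi|^2,\qquad \mathrm{Re}\,\lambda_2 \to -\kappa/\delta,\qquad \mathrm{Re}\,\lambda_{3,4} \to -\gamma^2/(2\delta).
\end{align*}
This produces the common factor $|\mathrm{e}^{\lambda_k t}| \leqslant \mathrm{e}^{-ct}$ in all four summands. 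Next, from the root differences
\begin{align*}
|\lambda_1 - \lambda_k| \sim \delta|\xi|^2\ \text{for}\ k\in\{2,3,4\}, \qquad |\lambda_j - \lambda_k| \sim b|\xi|\ \text{for}\ j,k\in\{2,3,4\},\ j\neq k,
\end{align*}
the Vandermonde-type denominator satisfies $|\det(\mb{V})| \sim |\xi|^{9}$ on $\ml{Z}_{\extt}(N_0)$.

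The last step is to express the fictitious initial data $v_2, v_3$ of the reduced 4th-order problem \eqref{TypeIII-v} in terms of the original data. From the formulas stated after \eqref{TypeIII-v} one reads
\begin{align*}
|\widehat{u}_2| &\lesssim |\xi|^2|\widehat{u}_0| + |\xi|\,|\widehat{\theta}_0|, & |\widehat{u}_3| &\lesssim |\xi|^2|\widehat{u}_1| + |\xi|\,|\widehat{\theta}_1|, \\
|\widehat{\theta}_2| &\lesssim |\xi|^3|\widehat{u}_0| + |\xi|^2|\widehat{\theta}_0| + |\xi|^2|\widehat{\theta}_1|, & |\widehat{\theta}_3| &\lesssim |\xi|^5|\widehat{u}_0| + |\xi|^3|\widehat{u}_1| + |\xi|^4|\widehat{\theta}_0| + |\xi|^4|\widehat{\theta}_1|.
\end{align*}
Substituting into the cofactor expansions of each $\det(\mb{V}_k^{(j)})$ and dividing by $\det(\mb{V}) \sim |\xi|^{9}$ produces the declared weights $(1, |\xi|^{-1}, |\xi|^{-1}, |\xi|^{-3})$ on $(|\widehat{u}_0|, |\widehat{u}_1|, |\widehat{\theta}_0|, |\widehat{\theta}_1|)$ for $\widehat{u}$ and $(|\xi|, |\xi|^{-1}, 1, 1)$ for $\widehat{\theta}$. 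Together with the uniform factor $\mathrm{e}^{-ct}$ this gives the claimed pointwise bounds.

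The main obstacle is the bookkeeping of $|\xi|$-powers in the cofactor determinants. Because $\lambda_1$ is an order of $|\xi|$ larger than the remaining roots (i.e.\ $|\xi|^{2}$ versus $|\xi|$ or $O(1)$), the columns of $\mb{V}$ have very different magnitudes, and isolating the leading $|\xi|$-order of each $\det(\mb{V}_k^{(j)})$ requires a deliberate cofactor expansion rather than a Vandermonde shortcut. In contrast to the small-frequency case, however, one does not need to track cancellations between summands or identify a leading oscillatory profile, so once the orders are pinned down the final bound follows immediately from Cramer's rule and the uniform exponential decay.
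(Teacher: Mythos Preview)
Your proposal is correct and follows essentially the same approach as the paper: both use the representation \eqref{Rep-one-pair}, the root asymptotics to obtain $|\det(\mb{V})|\sim|\xi|^9$ and the uniform bound $\mathrm{e}^{\lambda_j t}\lesssim\mathrm{e}^{-ct}$, and then balance $|\xi|$-powers in the cofactor determinants after expressing $\widehat{v}_2,\widehat{v}_3$ in terms of the original data. Two minor remarks: (i) the notation $\det(\mb{V}_k^{(j)})$ belongs to Case (A-1); in the large-frequency regime (Case (A-2)) the appropriate objects are $\det(\mb{V}_j)$, $j\in\{1,2,3,4\}$; (ii) the paper is slightly more explicit, recording the intermediate bounds $|\det(\mb{V}_1)|$ and $|\det(\mb{V}_{2,3,4})|$ separately before dividing, whereas you assert the final weights directly---your last paragraph correctly identifies this bookkeeping as the only real work.
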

\begin{proof}
Thanks to the representation \eqref{Rep-one-pair}, we compute the determinant of Vandermonde matrix
\begin{align*}
\chi_{\extt}(\xi)\det(\mb{V})=\chi_{\extt}(\xi)\left(-2ib^3\delta^3|\xi|^9+O(|\xi|^8)\right),
\end{align*}
and the determinant of $\mb{V}_j$ in different situations that
\begin{itemize}
	\item if $\widehat{v}=\widehat{u}$, then
	\begin{align*}
	\chi_{\extt}(\xi)|\det(\mb{V}_1)|&\lesssim \chi_{\extt}(\xi)\left(|\xi|^5|\widehat{u}_0|+|\xi|^5|\widehat{u}_1|+|\xi|^4|\widehat{\theta}_0|+|\xi|^4|\widehat{\theta}_1|\right),\\
		\chi_{\extt}(\xi)|\det(\mb{V}_{2,3,4})|&\lesssim \chi_{\extt}(\xi)\left(|\xi|^9|\widehat{u}_0|+|\xi|^8|\widehat{u}_1|+|\xi|^8|\widehat{\theta}_0|+|\xi|^6|\widehat{\theta}_1|\right);
	\end{align*}
	\item if $\widehat{v}=\widehat{\theta}$, then
\begin{align*}
	\chi_{\extt}(\xi)|\det(\mb{V}_1)|&\lesssim \chi_{\extt}(\xi)\left(|\xi|^8|\widehat{u}_0|+|\xi|^6|\widehat{u}_1|+|\xi|^7|\widehat{\theta}_0|+|\xi|^7|\widehat{\theta}_1|\right),\\
		\chi_{\extt}(\xi)|\det(\mb{V}_{2,3,4})|&\lesssim \chi_{\extt}(\xi)\left(|\xi|^{10}|\widehat{u}_0|+|\xi|^8|\widehat{u}_1|+|\xi|^9|\widehat{\theta}_0|+|\xi|^9|\widehat{\theta}_1|\right).
\end{align*}
\end{itemize}
Combining the real parts of characteristic roots that
\begin{align*}
\chi_{\extt}(\xi)\left(\mathrm{e}^{\lambda_1t}+\mathrm{e}^{\lambda_2t}+\mathrm{e}^{\lambda_{\mathrm{R}}t}\right)\lesssim\chi_{\extt}(\xi)\,\mathrm{e}^{-ct},
\end{align*}
 the desired exponential decay estimates for $\widehat{u}$ as well as $\widehat{\theta}$ are deduced.
\end{proof}

Finally, thanks to $\mathrm{Re}\,\lambda_j<0$ for bounded frequencies, it is easy to conclude the next result.
\begin{prop}\label{Prop-bdd}
Let $\xi\in\ml{Z}_{\bdd}(\varepsilon_0,N_0)$. The solutions $\widehat{u},\widehat{\theta}$ satisfy the following exponential decay estimates in the Fourier space: 
\begin{align*}
\chi_{\bdd}(\xi)(|\widehat{u}|+|\widehat{\theta}|)\lesssim \chi_{\bdd}(\xi)\,\mathrm{e}^{-ct}\left(|\widehat{u}_0|+|\widehat{u}_1|+|\widehat{\theta}_0|+|\widehat{\theta}_1|\right).
\end{align*}
\end{prop}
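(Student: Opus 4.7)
The plan is to combine three ingredients: (i) the strict negativity $\mathrm{Re}\,\lambda_j(|\xi|)<0$ for all four roots on the entire bounded zone, which was established just before the statement, (ii) compactness of the closed bounded frequency zone, which upgrades the pointwise negativity to a uniform negative upper bound on the real parts, and (iii) the polynomial dependence in $|\xi|$ of the initial data $\widehat{v}_2,\widehat{v}_3$ introduced in the pretreatment subsection, which lets us absorb the multiplier prefactors into the constant.

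First, I would fix a uniform decay rate. Since the coefficients of the quartic \eqref{quartic} depend continuously (indeed polynomially) on $|\xi|$, its roots, viewed as a multi-set, depend continuously on $|\xi|$ on the closed bounded zone $\overline{\ml{Z}}_{\bdd}(\varepsilon_0/2,2N_0)$. Together with $\mathrm{Re}\,\lambda_j(|\xi|)<0$ for every $j$ and every $\xi$ in this compact set, this yields a uniform constant
\begin{equation*}
c_0:=\min_{\xi\in\overline{\ml{Z}}_{\bdd}(\varepsilon_0/2,2N_0)}\min_{j\in\{1,2,3,4\}}\bigl(-\mathrm{Re}\,\lambda_j(|\xi|)\bigr)>0.
\end{equation*}

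Next I would translate this spectral bound into a bound on the solution itself. Writing $\widehat{V}:=(\widehat{v},\widehat{v}_t,\widehat{v}_{tt},\widehat{v}_{ttt})^{\mathrm{T}}$, the Cauchy problem \eqref{TypeIII-v} becomes $\widehat{V}_t=A(|\xi|)\widehat{V}$ with $A(|\xi|)$ the companion matrix of \eqref{quartic}. Using a Dunford integral representation
\begin{equation*}
\mathrm{e}^{tA(|\xi|)}=\frac{1}{2\pi i}\oint_{\Gamma}\mathrm{e}^{tz}\,(zI-A(|\xi|))^{-1}\,\mathrm{d}z
\end{equation*}
along a fixed contour $\Gamma\subset\{\mathrm{Re}\,z\leqslant -c_0/2\}$ that encloses all four roots uniformly for $\xi$ in the compact zone, one obtains $\|\mathrm{e}^{tA(|\xi|)}\|\leqslant K_0\,\mathrm{e}^{-c_0 t/2}$ with $K_0$ independent of $\xi$. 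Finally, the reduced initial data $\widehat{v}_2,\widehat{v}_3$ (for both choices $v=u$ and $v=\theta$) are polynomials in $|\xi|$ of bounded degree applied to $\widehat{u}_0,\widehat{u}_1,\widehat{\theta}_0,\widehat{\theta}_1$, so on $\ml{Z}_{\bdd}(\varepsilon_0,N_0)$ these polynomial factors are uniformly bounded; combining this with the semigroup estimate yields the claim.

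The only subtle point is the uniform control of the matrix exponential at values of $|\xi|$ where two characteristic roots may coincide (where a Jordan block appears, and a naive eigenbasis diagonalization becomes singular). This is precisely why I favor the contour/resolvent representation above: the contour can be chosen once for the whole compact zone, bypassing the diagonalization altogether and producing a genuinely uniform constant $K_0$. Everything else is routine bookkeeping of the polynomial prefactors in $|\xi|$.
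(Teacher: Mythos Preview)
Your argument is correct and considerably more detailed than the paper's own treatment, which simply states ``thanks to $\mathrm{Re}\,\lambda_j<0$ for bounded frequencies, it is easy to conclude the next result'' without further elaboration. Both approaches rest on the same idea---uniform negativity of the real parts on the compact zone---but you supply the missing details: the compactness argument to upgrade pointwise negativity to a uniform bound $c_0>0$, and the Dunford contour representation of $\mathrm{e}^{tA(|\xi|)}$ to obtain a uniform semigroup bound that remains valid even where roots may coincide and the eigenbasis diagonalization degenerates. Your treatment of the polynomial prefactors in $\widehat{v}_2,\widehat{v}_3$ is also a necessary bookkeeping step that the paper leaves implicit.
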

\subsection{Optimal large time estimates for double diffusion waves kernels}\label{Subsection-Double-Diffusion-Waves}
\hspace{5mm}As our preparations in order to estimate $u(t,\cdot)$ and $\theta(t,\cdot)$ in the $L^2$ norm optimally, this subsection  contributes to study sharp large time behavior for the following time-dependent function (their leading terms $\widehat{\ml{G}}_1$ and $\widehat{\ml{G}}_2$ are related to the cases with $\sigma=1$ and $\sigma=0$, respectively):
\begin{align*}
	\ml{I}(t;n):=\int_0^{\varepsilon_0}\left|\ell_1\frac{\sin(\beta_1rt)}{\beta_1r}\,\mathrm{e}^{-c_1r^2t}-\ell_2\frac{\sin(\beta_2rt)}{\beta_2r}\,\mathrm{e}^{-c_2r^2t}\right|^2\,r^{n-1-2\sigma}\,\mathrm{d}r
\end{align*}
with $\ell_1,\ell_2\neq0$ generally and $\beta_1,\beta_2,c_1,c_2>0$ such that $c_1\neq c_2$ due to our consideration \eqref{parameters}. Again, the positive constant $\varepsilon_0\ll 1$ denotes the size of small frequencies.

Formally taking $\ell_2=0$ and $\sigma=0$, it will reduce to the classical diffusion waves kernel (see \cite{Shibata=2000,Ikehata-Todorova-Yordanov=2013,Ikehata=2014,Ikehata-Ono=2017} and references therein) in the $L^2$ norm, i.e.
\begin{align}\label{Optimal-Ikehata}
\ml{I}(t;n)|^{\ell_2=0}_{\sigma=0}=\frac{1}{|\mb{S}^{n-1}|}\left\|\ell_1\frac{\sin(\beta_1|\xi|t)}{\beta_1|\xi|}\,\mathrm{e}^{-c_1|\xi|^2t}\right\|_{L^2(|\xi|\leqslant\varepsilon_0)}^2\approx[\ml{E}_n(1+t)]^2
\end{align}
 for large time $t\gg1$, whose optimality has been studied by \cite{Ikehata=2014} for $n\geqslant 3$ and \cite{Ikehata-Ono=2017} for $n\leqslant 2$. Surprisingly, carrying a stronger singularity for small frequencies, the properties for $\ml{I}(t;n)|_{\sigma=1}$ are quite different from those for $\ml{I}(t;n)|_{\sigma=1}^{\ell_2=0}$ because of the blow-up phenomenon
 \begin{align}\label{Blow-up-01}
 \ml{I}(t_0;n)|_{\sigma=1}^{\ell_2=0}=+\infty,\ \ \mbox{namely},\ \ \frac{\sin(\beta_1rt_0)}{\beta_1r}\,\mathrm{e}^{-\frac{c_1}{2}r^2t_0}\,r^{\frac{n-3}{2}}\not\in L^2([0,\varepsilon_0])
 \end{align}
in low dimensions $n\in\{1,2\}$ even at finite time $t_0>0$. This situation will be completely changed, particularly, when $\ell_1=\ell_2\neq0$, due to some cancellations in the $(t,r)$-subzone $\{0\leqslant rt\ll 1\}$. Thus, the study for $\ml{I}(t;n)$ is not simply a generalization of the one for the classical diffusion waves kernel $\ml{I}(t;n)|^{\ell_2=0}$. 
 
 For one thing, motivated by \cite{Ikehata=2014,Ikehata-Ono=2017}, we in this part firstly estimate $\ml{I}(t;n)$ when $n\geqslant 1+2\sigma$ with any $\sigma\in\mb{N}_0$ sharply in the sense of same large time behavior for its upper and lower bounds. To overcome new difficulties from the subtraction ($\ell_2\neq0$) in sharp lower bound estimates,  some suitable parameters $\mu_j>0$ will be introduced as shrinking the domain of integral for different dimensions. The parameters $\epsilon_j>0$ are devoted to relax the restrictions on $\beta_1,\beta_2,\ell_1,\ell_2$.
 
Concerning the remaining case $n<1+2\sigma$, due to the attractive aim in this part being $\ml{I}(t;n)|_{\sigma=0,1}$ (recalling $\widehat{\ml{G}}_1$ and $\widehat{\ml{G}}_2$), we will estimate $\ml{I}(t;n)|_{\sigma=1}$ in low dimensions $n\in\{1,2\}$ only, where we remark that $\{n<1+2\sigma\}\cap \{\sigma=0,n\in\mb{N}_+\}=\emptyset$.
The threshold condition (with respect to the parameters $\ell_1,\ell_2$) of finite time blow-up for $\ml{I}(t;n)|_{\sigma=1}$ is given. To be specific,  $\ml{I}(t;n)|_{\sigma=1}^{\ell_1=\ell_2}$ globally in time exists, where we additionally provide its sharp upper and lower bound estimates. Oppositely, $\ml{I}(t;n)|_{\sigma=1}^{\ell_1\neq\ell_2}$ blows up in finite time at $0<t\leqslant t_0$ with any $t_0>0$.

\subsubsection{Optimal large time estimates for high dimensions}
\hspace{5mm}Let us focus on upper bound estimates for high dimensions $n\geqslant 1+2\sigma$ at first, in which the singular part $r^{n-1-2\sigma}$ of $\ml{I}(t;r)$ becomes regular. In other words, it just needs to deal with the weaker singular parts $(\beta_1r)^{-2}$ and $(\beta_2r)^{-2}$ for $r\leqslant \varepsilon_0$.
\begin{lemma}\label{Lemma-Upper-Bound}
Let $n\geqslant 1+2\sigma$ with any $\sigma\in\mb{N}_0$. Let $\ell_1,\ell_2$ be non-trivial constants, and $\beta_1,\beta_2,c_1,c_2$ be positive constants. Then, the following sharp upper bound estimates:
\begin{align*}
\ml{I}(t;n)\lesssim\begin{cases}
(1+t)&\mbox{if}\ \ n=1+2\sigma,\\
\ln (\mathrm{e}+t)&\mbox{if}\ \ n=2+2\sigma,\\
(1+t)^{1+\sigma-\frac{n}{2}}&\mbox{if}\ \ n\geqslant 3+2\sigma,
\end{cases}
\end{align*}
hold for any $t>0$.
\end{lemma}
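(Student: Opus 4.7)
First I would decouple the two-wave kernel by the elementary inequality $|A-B|^2 \leq 2|A|^2+2|B|^2$, which reduces the problem to bounding the single-wave integrals
\[
\ml{J}_j(t;n):=\int_0^{\varepsilon_0}\left|\frac{\sin(\beta_j rt)}{\beta_j r}\right|^2\mathrm{e}^{-2c_j r^2 t}\, r^{n-1-2\sigma}\,\mathrm{d}r,\qquad j\in\{1,2\}.
\]
For the upper bound in the regime $n\geq 1+2\sigma$ the factor $r^{n-1-2\sigma}$ is already locally integrable at $r=0$, so no cancellation between the two summands is required; this is the structural difference from the singular case $n<1+2\sigma$ where, as the authors emphasize via \eqref{Blow-up-01}, one must use $\ell_1=\ell_2$ to cure the $L^2$ blow-up.

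Next, I would split each $\ml{J}_j$ at the natural threshold $r=1/t$. On $[0,1/t]$ the bound $|\sin(\beta_j rt)|\leq \beta_j rt$ dominates the integrand by $t^2 r^{n-1-2\sigma}$, giving a contribution of order $t^{2+2\sigma-n}$. On $[1/t,\varepsilon_0]$ the bound $|\sin|\leq 1$ dominates the integrand by $r^{n-3-2\sigma}\mathrm{e}^{-2c_j r^2 t}$; the substitution $s=r\sqrt{t}$ converts this into
\[
t^{\,1+\sigma-n/2}\int_{1/\sqrt{t}}^{\varepsilon_0\sqrt{t}} s^{n-3-2\sigma}\mathrm{e}^{-2c_j s^2}\,\mathrm{d}s.
\]
For $n\geq 3+2\sigma$ the rescaled integral is bounded uniformly in $t$, producing the decay $t^{1+\sigma-n/2}$. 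For $n=2+2\sigma$ the rescaled integrand is $s^{-1}\mathrm{e}^{-2c_j s^2}$ and the logarithm $\ln(\mathrm{e}+t)$ appears from the subinterval $s\in[1/\sqrt{t},1]$ via $\int ds/s\sim \tfrac12 \ln t$. For $n=1+2\sigma$ the rescaled integrand is $s^{-2}\mathrm{e}^{-2c_j s^2}$, whose integral near $s=0$ grows like $\sqrt{t}$, and the prefactor $t^{1/2}$ then produces the linear bound $t$.

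Finally I would verify that the low-frequency contribution $t^{2+2\sigma-n}$ is absorbed by the above: it is of order $t$ when $n=1+2\sigma$, of order $1$ when $n=2+2\sigma$, and subdominant to $t^{1+\sigma-n/2}$ when $n\geq 3+2\sigma$ (since $2+2\sigma-n<1+\sigma-n/2$ there). Replacing $t$ by $1+t$ to handle $t$ near $0$, where $\ml{I}(0;n)=0$ trivially, yields the three stated bounds for every $t>0$.

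\textbf{Main obstacle.} The only delicate point is the logarithmic case $n=2+2\sigma$. One should resist splitting at the diffusive scale $r=1/\sqrt{t}$, which would misallocate the $r^{-1}$ singularity and either lose or overcount the $\ln t$. The correct accounting becomes transparent only after the rescaling $s=r\sqrt{t}$, where the logarithm appears cleanly as $\int_{1/\sqrt{t}}^{1}ds/s$; everything else is routine elementary integration.
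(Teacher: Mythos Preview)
Your proposal is correct and follows essentially the same strategy as the paper: decouple via $|A-B|^2\lesssim |A|^2+|B|^2$, then bound each single-wave integral by exploiting $|\sin x|\leqslant x$ for small argument and $|\sin x|\leqslant 1$ otherwise, with the Gaussian controlling large $r$. The only tactical difference is that the paper, after substituting $w=r\sqrt{t}$, chooses case-dependent split points ($w=1/\sqrt{t}$ for $n=1+2\sigma$, $w=1/t$ for $n=2+2\sigma$) and then uses integration by parts against $\mathrm{d}w^{-1}$ or $\mathrm{d}\ln w$ to handle the non-integrable $w^{-2}$ and $w^{-1}$ tails, whereas you use a single uniform split at $r=1/t$ and extract the $\sqrt{t}$ or $\ln t$ growth directly by a further split at $s=1$ without integration by parts; your bookkeeping is slightly more streamlined but the content is the same.
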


\begin{proof}
To begin with the demonstration, concerning bounded time $0<t\leqslant t_0$ it is trivial that
\begin{align*}
\ml{I}(t;n)&\lesssim t^2\sum\limits_{j=1,2}\int_0^{\varepsilon_0}\left|\frac{\sin(\beta_jrt)}{\beta_jrt}\right|^2\,\mathrm{e}^{-2c_jr^2t}\,r^{n-1-2\sigma}\,\mathrm{d}r\\
&\lesssim t^2_0\int_0^{\varepsilon_0}r^{n-1-2\sigma}\,\mathrm{d}r\lesssim 1,
\end{align*}
since $n-1-2\sigma\geqslant0$. Let us next prove our lemma for large time $t\geqslant t_0\gg1$ in three cases (different dimensions $n$)  due to the non-summable singularity $r^{n-3-2\sigma}$ in $r=0$.\\

\noindent\underline{Lower Dimensional Case: $n= 1+2\sigma$.} Clearly, it holds that
\begin{align*}
\ml{I}(t;1+2\sigma)&\lesssim\sum\limits_{j=1,2}\int_0^{\varepsilon_0}|\sin(\beta_jrt)|^2\,\mathrm{e}^{-2c_jr^2t}\,r^{-2}\,\mathrm{d}r\\
&\lesssim\sqrt{t}\sum\limits_{j=1,2}\left(\int_0^{1/\sqrt{t}}+\int_{1/\sqrt{t}}^{+\infty}\right)|\sin(\beta_jw\sqrt{t})|^2\,\mathrm{e}^{-2c_jw^2}\,w^{-2}\,\mathrm{d}w
\end{align*}
with the ansatz $w:=r\sqrt{t}$ (always used throughout this subsection) by separating the domain of integral into two parts suitably via the threshold $1/\sqrt{t}$. For one thing, by $|\sin(\beta_jw\sqrt{t})|\leqslant \beta_jw\sqrt{t}$ one estimates
\begin{align*}
\int_0^{1/\sqrt{t}}|\sin(\beta_jw\sqrt{t})|^2\,\mathrm{e}^{-2c_jw^2}\,w^{-2}\,\mathrm{d}w&\lesssim t\int_0^{1/\sqrt{t}}\mathrm{e}^{-2c_jw^2}\,\mathrm{d}w\lesssim \sqrt{t}.
\end{align*}
For another, an application of integration by parts shows
\begin{align}\label{Est-02}
\int_{1/\sqrt{t}}^{+\infty}|\sin(\beta_jw\sqrt{t})|^2\,\mathrm{e}^{-2c_jw^2}\,w^{-2}\,\mathrm{d}w&\lesssim\int_{1/\sqrt{t}}^{+\infty}\mathrm{e}^{-2c_jw^2}\,w^{-2}\,\mathrm{d}w=-\int_{1/\sqrt{t}}^{+\infty}\mathrm{e}^{-2c_jw^2}\,\mathrm{d}w^{-1}\notag\\
&\lesssim -\left(\mathrm{e}^{-2c_jw^2}\,w^{-1}\right)\big|_{1/\sqrt{t}}^{+\infty}+\int_{1/\sqrt{t}}^{+\infty}w^{-1}\,\mathrm{d}\mathrm{e}^{-2c_jw^2}\notag\\
&\lesssim \sqrt{t}\,\mathrm{e}^{-2c_j/t}-4c_j\int_{1/\sqrt{t}}^{+\infty}\mathrm{e}^{-2c_jw^2}\,\mathrm{d}w\lesssim \sqrt{t},
\end{align}
which says $\ml{I}(t;1+2\sigma)\lesssim t$ by combining the last two estimates.\\

\noindent\underline{Critical Dimensional Case: $n=2+2\sigma$.} By the similar way to the one of $n=1+2\sigma$, the threshold $1/t$ separates the domain of integral such that
\begin{align*}
	\ml{I}(t;2+2\sigma)&\lesssim\sum\limits_{j=1,2}\int_0^{\varepsilon_0}|\sin(\beta_jrt)|^2\,\mathrm{e}^{-2c_jr^2t}\,r^{-1}\,\mathrm{d}r\\
	&\lesssim\sum\limits_{j=1,2}\left(\int_0^{1/t}+\int_{1/t}^{+\infty}\right)|\sin(\beta_jw\sqrt{t})|^2\,\mathrm{e}^{-2c_jw^2}\,w^{-1}\,\mathrm{d}w.
\end{align*}
For the first integral, the inequality $|\sin(\beta_jw\sqrt{t})|\leqslant \beta_jw\sqrt{t}$ implies
\begin{align*}
\int_0^{1/t}|\sin(\beta_jw\sqrt{t})|^2\,\mathrm{e}^{-2c_jw^2}\,w^{-1}\,\mathrm{d}w\lesssim t\int_0^{1/t}\mathrm{e}^{-2c_jw^2}\,w\,\mathrm{d}w\lesssim t^{-1}.
\end{align*}
For the second integral, one may employ an integration by parts further and the boundedness of sine function to deduce
\begin{align}\label{Est-03}
\int_{1/t}^{+\infty}|\sin(\beta_jw\sqrt{t})|^2\,\mathrm{e}^{-2c_jw^2}\,w^{-1}\,\mathrm{d}w&\lesssim\int_{1/t}^{+\infty}\mathrm{e}^{-2c_jw^2}\,w^{-1}\,\mathrm{d}w=\int_{1/t}^{+\infty}\mathrm{e}^{-2c_jw^2}\mathrm{d}\ln w\notag\\
&\lesssim \mathrm{e}^{-2c_j/t^2}\,\ln t+4c_j\int_{0}^{+\infty}\mathrm{e}^{-2c_jw^2}\,w\,|\ln w|\,\mathrm{d}w\notag\\
&\lesssim \ln t+1
\end{align}
because of $\mathrm{e}^{-2c_jw^2}\,w\,|\ln w|\in L^1([0,+\infty))$. We thus conclude
\begin{align*}
\ml{I}(t;2+2\sigma)\lesssim t^{-1}+\ln t+1\lesssim \ln t
\end{align*}
 for large time $t\gg1$.\\

\noindent\underline{Higher Dimensional Case: $n\geqslant 3+2\sigma$.} Thanks to the boundedness of $|\sin(\beta_jrt)|$, one derives
\begin{align*}
\ml{I}(t;n)&\lesssim\sum\limits_{j=1,2}\int_0^{\varepsilon_0}|\sin(\beta_jrt)|^2\,\mathrm{e}^{-2c_jr^2t}\,r^{n-3-2\sigma}\,\mathrm{d}r\\
&\lesssim\sum\limits_{j=1,2}t^{-\frac{n-2-2\sigma}{2}}\int_0^{+\infty}\mathrm{e}^{-2c_jw^2}\,w^{n-3-2\sigma}\,\mathrm{d}w\lesssim t^{1+\sigma-\frac{n}{2}},
\end{align*}
in which we used $n-3-2\sigma\geqslant0$ so that $\mathrm{e}^{-2c_jw^2}\,w^{n-3-2\sigma}\in L^1([0,+\infty))$.
\end{proof}

We are going to justify the optimality of derived estimates in Lemma \ref{Lemma-Upper-Bound} for large time $t\gg1$ via estimating its lower bounds.

\begin{lemma}\label{Lemma-Lower-Bound}
Let $n\geqslant 1+2\sigma$ with any $\sigma\in\mb{N}_0$. Let $\ell_1,\ell_2$ be non-trivial constants, and $\beta_1,\beta_2,c_1,c_2$ be positive constants
such that $c_1\neq c_2$ and
\begin{align}\label{Condition-Lower-Bound}
	\begin{cases}
	\ell_1^2\neq\ell_2^2	&\mbox{if}\ \ n=1+2\sigma,\\[0.5em]
\displaystyle{\frac{\ell_1^2\beta_2^2}{\ell_2^2\beta_1^2}>2}\ \ \mbox{or}\ \ \displaystyle{\frac{\ell_2^2\beta_1^2}{\ell_1^2\beta_2^2}>2}	&\mbox{if}\ \ n=2+2\sigma,\\[0.5em]
\mbox{no restriction}	&\mbox{if}\ \ n\geqslant 3+2\sigma.
	\end{cases}
\end{align}
 Then, the following sharp lower bound estimates:
\begin{align*}
	\ml{I}(t;n)\gtrsim\begin{cases}
		t&\mbox{if}\ \ n=1+2\sigma,\\
		\ln t&\mbox{if}\ \ n=2+2\sigma,\\
		t^{1+\sigma-\frac{n}{2}}&\mbox{if}\ \ n\geqslant 3+2\sigma,
	\end{cases}
\end{align*}
 hold for large time $t\gg1$.
\end{lemma}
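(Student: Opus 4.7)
The plan is to rescale by $w = r\sqrt{t}$, which converts the integral into
\[
\ml{I}(t;n) = t^{1+\sigma-n/2}\int_0^{\varepsilon_0\sqrt{t}}\left|\ell_1\frac{\sin(\beta_1 w\sqrt{t})}{\beta_1 w}\mathrm{e}^{-c_1 w^2}-\ell_2\frac{\sin(\beta_2 w\sqrt{t})}{\beta_2 w}\mathrm{e}^{-c_2 w^2}\right|^2 w^{n-1-2\sigma}\,\mathrm{d}w.
\]
Writing $A(w,t), B(w,t)$ for the two terms, the three claimed bounds reduce to lower-bounding this $w$-integral by $1$, $\ln t$, or $\sqrt{t}$ respectively, and the three dimensional regimes are distinguished by which sub-region of $w$ supplies the dominant mass.

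For $n\geq 3+2\sigma$ the weight $w^{n-3-2\sigma}$ is bounded at the origin, so I would integrate only over a fixed compact interval $[\mu_3,\mu_4]\subset(0,\varepsilon_0)$ (independent of $t$). Expanding $|A-B|^2 = A^2 + B^2 - 2AB$, the trigonometric identities $\sin^2(x) = (1-\cos 2x)/2$ and $\sin(a)\sin(b) = (\cos(a-b)-\cos(a+b))/2$ isolate a non-oscillating piece plus terms oscillating in $w\sqrt{t}$; Riemann--Lebesgue (implemented via integration by parts) sends the latter to $O(t^{-1/2})$, leaving the strictly positive limit
\[
\frac{1}{2}\int_{\mu_3}^{\mu_4}\left(\frac{\ell_1^2}{\beta_1^2}\mathrm{e}^{-2c_1 w^2}+\frac{\ell_2^2}{\beta_2^2}\mathrm{e}^{-2c_2 w^2}\right)w^{n-3-2\sigma}\,\mathrm{d}w.
\]
If $\beta_1=\beta_2$ the cross-cosine $\cos((\beta_1-\beta_2)w\sqrt{t})=1$ no longer oscillates and must be kept, but the hypothesis $c_1\neq c_2$ still prevents the limiting integrand from being identically zero, so no algebraic constraint on $\ell_j,\beta_j$ is needed.

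For the critical case $n=2+2\sigma$ I would work on the intermediate window $w\in[t^{-1/2},1]$ and invoke the elementary inequality $|A-B|^2\geq\tfrac12 A^2 - B^2$. After substituting $s=\beta_j w\sqrt{t}$ the integral $\int A^2\,w\,\mathrm{d}w$ becomes $(\ell_1^2/\beta_1^2)\int_{\beta_1}^{\beta_1\sqrt{t}}\sin^2(s)s^{-1}\mathrm{e}^{-2c_1 s^2/(\beta_1^2 t)}\,\mathrm{d}s$, whose $\tfrac14\ln t$ behaviour falls out of $\sin^2(s)/s=(1-\cos 2s)/(2s)$ together with the convergent cosine integral $\int_1^\infty\cos(2s)s^{-1}\mathrm{d}s$. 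The same manipulation applied to $\int B^2 w\,\mathrm{d}w$ returns $(\ell_2^2/4\beta_2^2)\ln t + O(1)$, so the half-square inequality produces a lower bound that is a strictly positive multiple of $\ln t$ exactly under $\ell_1^2\beta_2^2>2\ell_2^2\beta_1^2$; the symmetric hypothesis is handled by the reverse inequality $|A-B|^2\geq\tfrac12 B^2 - A^2$. For the lowest dimension $n=1+2\sigma$ I would instead shrink to $w\in[0,\mu/\sqrt{t}]$ with $\mu$ small, so that $\beta_j w\sqrt{t}\leq\beta_j\mu\ll 1$ and Taylor expansion gives $A-B = (\ell_1-\ell_2)\sqrt{t}(1+O(w^2 t))$; a direct integration then returns $(\ell_1-\ell_2)^2\mu\sqrt{t}/2$ for the $w$-integral, hence $\ml{I}\gtrsim t$. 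The stated hypothesis $\ell_1^2\neq\ell_2^2$ comfortably accommodates this step (only $\ell_1\neq\ell_2$ is needed for Taylor), and the stronger form leaves room to close the argument through the half-square inequality in edge cases such as $\beta_1=\beta_2$.

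The principal obstacle is controlling the oscillating cross-term $\sin(\beta_1 w\sqrt{t})\sin(\beta_2 w\sqrt{t})/w^2$ in the critical and low-dimensional regimes, where $1/w^2$ is non-integrable at the origin and a naive Riemann--Lebesgue estimate is too crude: the oscillation frequency is $\sqrt{t}$ while the singular weight concentrates precisely at $w\sim t^{-1/2}$, so the two effects compete on equal footing. The resolution is either the algebraic reduction $|A-B|^2\geq\tfrac12 A^2 - B^2$ (whose price is exactly the hypotheses on $\ell_j,\beta_j$ appearing in \eqref{Condition-Lower-Bound}) or the explicit substitution $s=\beta_j w\sqrt{t}$ that turns the offending piece into an asymptotically convergent cosine integral. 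Together with $c_1\neq c_2$ from \eqref{parameters}, these ingredients are what distinguish the double diffusion-wave lower bound from its single-wave predecessor \eqref{Optimal-Ikehata}.
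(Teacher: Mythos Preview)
Your proposal is correct and closes all three regimes, but the route differs in flavor from the paper's. The paper argues uniformly through a parameterized triangle inequality $|f-g|^2\geqslant(1-\epsilon_0)|f|^2-\tfrac{1-\epsilon_0}{\epsilon_0}|g|^2$: for $n\geqslant3+2\sigma$ it restricts to $w\in[\mu_3,(1+\epsilon_2)\mu_3]$ and tunes $\epsilon_0,\epsilon_2,\mu_3$ via $c_1\neq c_2$ so that the exponential weights $\mathrm{e}^{-2c_1(1+\epsilon_2)^2\mu_3^2}$ and $\mathrm{e}^{-2c_2\mu_3^2}$ separate enough to leave a positive coefficient; for $n=1+2\sigma$ it works on a tiny $w$-interval of width $O(t^{-1/2})$ and uses pointwise lower bounds on $\sin$, extracting the factor $\ell_1^2>\ell_2^2$ (or symmetrically) directly from the inequality. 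Your full expansion $|A-B|^2=A^2+B^2-2AB$ together with Riemann--Lebesgue on a fixed compact $w$-interval is cleaner in the higher-dimensional case (no parameter juggling, and the limiting integral is manifestly positive, with $c_1\neq c_2$ only invoked when $\beta_1=\beta_2$), while your Taylor argument at the origin for $n=1+2\sigma$ is more elementary and in fact needs only $\ell_1\neq\ell_2$, which the stated hypothesis $\ell_1^2\neq\ell_2^2$ certainly implies. The critical case $n=2+2\sigma$ is essentially the same in both treatments: half-square inequality, $\sin^2=(1-\cos)/2$, and an integration-by-parts bound on the oscillatory remainder. The trade-off is that the paper's single template runs uniformly across all three cases, whereas your argument swaps tools between regimes but avoids the delicate choice of auxiliary constants.
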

\begin{proof}
Our main philosophy is to introduce time-dependent subzones carrying suitable parameters associated with the $\epsilon_0$-dependent triangle inequality
\begin{align}\label{Triangle-Ineq}
|f-g|^2=(1-\epsilon_0)|f|^2-\frac{1-\epsilon_0}{\epsilon_0}\,|g|^2+\epsilon_0\left|f-\frac{1}{\epsilon_0}\,g\right|^2\geqslant (1-\epsilon_0)|f|^2-\frac{1-\epsilon_0}{\epsilon_0}\,|g|^2,
\end{align}
with a constant $\epsilon_0\in(0,1)$ to be fixed in each situation.
 We then may control lower bounds  quantitatively due to the fact that two diffusion waves kernels of the difference in $\ml{I}(t;n)$ have the similar large time qualitative property. We still divide our proof into three cases.\\

	\noindent\underline{Lower Dimensional Case: $n= 1+2\sigma$.}  We discuss the case $\ell_1^2>\ell_2^2$ in the first place. Let us consider large time $t\gg1$ such that 
	\begin{align*}
	[\mu_1/t,(1+\epsilon_1)\mu_1/t]\subset [0,\varepsilon_0]
	\end{align*}
with constants $\mu_1>0$ and $\epsilon_1>0$. By using the triangle inequality \eqref{Triangle-Ineq} and shrinking the domain of integral from $[0,\varepsilon_0]$ to $[\mu_1/t,(1+\epsilon_1)\mu_1/t]$, one arrives at
\begin{align*}
\ml{I}(t;1+2\sigma)&\geqslant \int_{\mu_1/t}^{(1+\epsilon_1)\mu_1/t}\left|\ell_1\frac{\sin(\beta_1rt)}{\beta_1r}\,\mathrm{e}^{-c_1r^2t}-\ell_2\frac{\sin(\beta_2rt)}{\beta_2r}\,\mathrm{e}^{-c_2r^2t}\right|^2\,\mathrm{d}r\\
&\geqslant \frac{(1-\epsilon_0)\ell_1^2}{\beta_1^2}\int_{\mu_1/t}^{(1+\epsilon_1)\mu_1/t}|\sin(\beta_1rt)|^2\,\mathrm{e}^{-2c_1r^2t}\,r^{-2}\,\mathrm{d}r\\
&\quad-\frac{(1-\epsilon_0)\ell_2^2}{\epsilon_0\beta_2^2}\int_{\mu_1/t}^{(1+\epsilon_1)\mu_1/t}|\sin(\beta_2rt)|^2\,\mathrm{e}^{-2c_2r^2t}\,r^{-2}\,\mathrm{d}r\\
&\geqslant \frac{(1-\epsilon_0)\ell_1^2}{\beta_1^2}\,\mathrm{e}^{-2c_1(1+\epsilon_1)^2\mu_1^2/t}\int_{\mu_1/t}^{(1+\epsilon_1)\mu_1/t}|\sin(\beta_1rt)|^2\,r^{-2}\,\mathrm{d}r-\frac{(1-\epsilon_0)\epsilon_1\ell_2^2\mu_1}{\epsilon_0}\,\mathrm{e}^{-2c_2\mu_1^2/t}\,t,
\end{align*}
where we used $|\sin(\beta_2rt)|\leqslant \beta_2rt$ in the last line.
We now choose $\mu_1>0$ to be a small parameter such that $(1+\epsilon_1)\beta_1\mu_1\ll1$ leading to $|\sin(\beta_1rt)|\geqslant \sin(\beta_1\mu_1)$ for any $r\in[\mu_1/t,(1+\epsilon_1)\mu_1/t]$. As a consequence,
\begin{align*}
\ml{I}(t;1+2\sigma)&\geqslant\frac{(1-\epsilon_0)\epsilon_1\ell_1^2}{(1+\epsilon_1)\mu_1\beta_1^2}\,\mathrm{e}^{-2c_1(1+\epsilon_1)^2\mu_1^2/t}\,t\,[\sin(\beta_1\mu_1)]^2-\frac{(1-\epsilon_0)\epsilon_1\ell_2^2\mu_1}{\epsilon_0}\,\mathrm{e}^{-2c_2\mu_1^2/t}\,t\\
&\geqslant(1-\epsilon_0)\epsilon_1\mu_1\left(\frac{\ell_1^2}{1+\epsilon_1}\,\mathrm{e}^{-2c_1(1+\epsilon_1)^2\mu_1^2/t}\left|\frac{\sin(\beta_1\mu_1)}{\beta_1\mu_1}\right|^2-\frac{\ell_2^2}{\epsilon_0}\,\mathrm{e}^{-2c_2\mu_1^2/t}\right)t\\
&\gtrsim \mathrm{e}^{-c/t}\,t,
\end{align*}
provided that
\begin{align*}
\frac{\ell_1^2}{1+\epsilon_1}\,\mathrm{e}^{-2c_1(1+\epsilon_1)^2\mu_1^2/t}\left|\frac{\sin(\beta_1\mu_1)}{\beta_1\mu_1}\right|^2>\frac{\ell_2^2}{\epsilon_0}\,\mathrm{e}^{-2c_2\mu_1^2/t}\ \ \Leftrightarrow\ \ \frac{\epsilon_0\ell_1^2}{(1+\epsilon_1)\ell_2^2}\left|\frac{\sin(\beta_1\mu_1)}{\beta_1\mu_1}\right|^2>\mathrm{e}^{2[c_1(1+\epsilon_1)^2-c_2]\mu_1^2/t}.
\end{align*}
Because of our consideration $\ell_1^2>\ell_2^2$, there exists a positive constant $\epsilon$ such that $\ell_1^2>(1+\epsilon)\ell_2^2$, e.g. $\epsilon=(\ell_1^2-\ell_2^2)/(2\ell_2^2)$. Thanks to $\epsilon_1,\mu_1>0$ and $\epsilon_0\in(0,1)$, there exist some positive constants $\epsilon_0,\epsilon_1,\mu_1$ such that
\begin{align*}
	\epsilon=\frac{\ell_1^2-\ell_2^2}{2\ell_2^2}=\frac{1+\epsilon_1}{\epsilon_0}\left|\frac{\beta_1\mu_1}{\sin(\beta_1\mu_1)}\right|^2-1>0,
\end{align*}
which means
\begin{align*}
 \ell_1^2>\frac{1+\epsilon_1}{\epsilon_0}\left|\frac{\beta_1\mu_1}{\sin(\beta_1\mu_1)}\right|^2\ell_2^2\ \ \Leftrightarrow\ \ \frac{\epsilon_0\ell_1^2}{(1+\epsilon_1)\ell_2^2}\left|\frac{\sin(\beta_1\mu_1)}{\beta_1\mu_1}\right|^2>1.
\end{align*}
 Note that $\epsilon_0,\epsilon_1,\mu_1$ are fixed constants dependent on $\beta_1,\ell_1,\ell_2$. So, one derives $\ml{I}(t;1+2\sigma)\gtrsim t$ for large time $t\gg1$. Due to the symmetric between $(\ell_1,\beta_1,c_1)$ and $(\ell_2,\beta_2,c_2)$ in $\ml{I}(t;n)$,  one may easily derive the desired lower bound estimate for $\ell_1^2<\ell_2^2$ by changing the roles of $f$ and $g$ in \eqref{Triangle-Ineq}.\\

	\noindent\underline{Critical Dimensional Case: $n=2+2\sigma$.} We just discuss the case $\ell_1^2\beta_2^2>2\ell_2^2\beta_1^2$, and the symmetric case $\ell_2^2\beta_1^2>2\ell_1^2\beta_2^2$ can be treated similarly. Let us consider large time $t\gg1$ such that 
	\begin{align*}
	[1/t,\mu_2/\sqrt{t}\, ]\subset[0,\varepsilon_0]
	\end{align*}
	with a suitable parameter $\mu_2>0$ to be determined later. Making use of the triangle inequality \eqref{Triangle-Ineq} and shrinking the domain of integral from $[0,\varepsilon_0]$ to $[1/t,\mu_2/\sqrt{t}\,]$, one may deduce
	\begin{align*}
	\ml{I}(t;2+2\sigma)&\geqslant\int_{1/t}^{\mu_2/\sqrt{t}}\left|\ell_1\frac{\sin(\beta_1rt)}{\beta_1r}\,\mathrm{e}^{-c_1r^2t}-\ell_2\frac{\sin(\beta_2rt)}{\beta_2r}\,\mathrm{e}^{-c_2r^2t}\right|^2\,r\,\mathrm{d}r\\
	&\geqslant \frac{(1-\epsilon_0)\ell_1^2}{\beta_1^2}\int_{1/t}^{\mu_2/\sqrt{t}}|\sin(\beta_1rt)|^2\,\mathrm{e}^{-2c_1r^2t}\,r^{-1}\,\mathrm{d}r\\
	&\quad-\frac{(1-\epsilon_0)\ell_2^2}{\epsilon_0\beta_2^2}\int_{1/t}^{\mu_2/\sqrt{t}}|\sin(\beta_2rt)|^2\,\mathrm{e}^{-2c_2r^2t}\,r^{-1}\,\mathrm{d}r\\
	&\geqslant \frac{(1-\epsilon_0)\ell_1^2}{2\beta_1^2}\int_{1/t}^{\mu_2/\sqrt{t}}\mathrm{e}^{-2c_1r^2t}\,r^{-1}\,\mathrm{d}r-\frac{(1-\epsilon_0)\ell_1^2}{2\beta_1^2}\int_{1/t}^{\mu_2/\sqrt{t}}\cos(2\beta_1rt)\,\mathrm{e}^{-2c_1r^2t}\,r^{-1}\,\mathrm{d}r\\
	&\quad-\frac{(1-\epsilon_0)\ell_2^2}{\epsilon_0\beta_2^2}\int_{1/t}^{\mu_2/\sqrt{t}}\mathrm{e}^{-2c_2r^2t}\,r^{-1}\,\mathrm{d}r\\
	&=:\ml{J}_{2,1}(t)-\ml{J}_{2,2}(t)-\ml{J}_{2,3}(t),
	\end{align*}
	where we used $2|\sin(\beta_1rt)|^2=1-\cos(2\beta_1rt)$ and $|\sin(\beta_2rt)|^2\leqslant 1$, respectively, in the previous line.
First of all, carrying out an integration by parts one gets
\begin{align}\label{Est-11}
-\frac{4\beta_1^3}{(1-\epsilon_0)\ell_1^2}\,\ml{J}_{2,2}(t)&=-\frac{1}{t}\int_{1/t}^{\mu_2/\sqrt{t}}\mathrm{e}^{-2c_1r^2t}\,r^{-1}\,\mathrm{d}\sin(2\beta_1rt)\notag\\
&=-\frac{1}{t}\left(\frac{\sin(2\beta_1rt)}{r}\,\mathrm{e}^{-2c_1r^2t}\right)\Big|_{1/t}^{\mu_2/\sqrt{t}}-\frac{1}{t}\int_{1/t}^{\mu_2/\sqrt{t}}\sin(2\beta_1rt)\,\mathrm{e}^{-2c_1r^2t}\,(r^{-2}+4c_1t)\,\mathrm{d}r\notag\\
&=-\frac{1}{\mu_2\sqrt{t}}\,\sin(2\beta_1\mu_2\sqrt{t})\,\mathrm{e}^{-2c_1\mu_2^2}+\sin(2\beta_1)\,\mathrm{e}^{-2c_1/t}\notag\\
&\quad-\frac{1}{t}\int_{1/t}^{\mu_2/\sqrt{t}}\sin(2\beta_1rt)\,\mathrm{e}^{-2c_1r^2t}\,(r^{-2}+4c_1t)\,\mathrm{d}r.
\end{align}
Concerning large time $t\gg1$, we are able to obtain
\begin{align*}
|\ml{J}_{2,2}(t)|\lesssim t^{-\frac{1}{2}}+1+t^{-1}\int_{1/t}^{\mu_2/\sqrt{t}}r^{-2}\,\mathrm{d}r+\int_{1/t}^{\mu_2/\sqrt{t}}\,\mathrm{d}r\lesssim 1.
\end{align*}
Secondly, as $t\gg 1$ it is clear that
\begin{align*}
\ml{J}_{2,1}(t)-\ml{J}_{2,3}(t)&\geqslant(1-\epsilon_0) \left(\frac{\ell_1^2}{2\beta_1^2}\,\mathrm{e}^{-2c_1\mu_2^2}-\frac{\ell_2^2}{\epsilon_0\beta_2^2}\,\mathrm{e}^{-2c_2/t}\right)\int_{1/t}^{\mu_2/\sqrt{t}}r^{-1}\,\mathrm{d}r\gtrsim\ln t,
\end{align*}
	provided that the following inequality:
	\begin{align}\label{Ineq-critical}
		\frac{\ell_1^2}{2\beta_1^2}\,\mathrm{e}^{-2c_1\mu_2^2}>\frac{\ell_2^2}{\epsilon_0\beta_2^2}\,\mathrm{e}^{-2c_2/t}\ \ \Leftrightarrow\ \ \frac{\epsilon_0\ell_1^2\beta_2^2}{2\ell_2^2\beta_1^2}> \mathrm{e}^{2c_1\mu_2^2-2c_2/t}
	\end{align}
	holds for a small but fixed parameter $\mu_2>0$. Due to $\ell_1^2\beta_2^2>2\ell_2^2\beta_1^2$, there exist $\mu_2>0$ and $\epsilon_0\in(0,1)$ such that
	\begin{align*}
	1<\mathrm{e}^{2c_1\mu_2^2}<\frac{\ell_1^2\beta_2^2}{2\ell_2^2\beta_1^2}\ \ \Rightarrow\ \ \mathrm{e}^{2c_1\mu_2^2}<\frac{\epsilon_0\ell_1^2\beta_2^2}{2\ell_2^2\beta_1^2}<\frac{\ell_1^2\beta_2^2}{2\ell_2^2\beta_1^2},
	\end{align*}
according to the density argument in $\mb{R}$, namely, the condition \eqref{Ineq-critical} is guaranteed. It shows the lower bound estimate $\ml{I}(t;2+2\sigma)\gtrsim \ln t-1\gtrsim \ln t$ for large time $t\gg1$.
	 \\
	 
	\noindent\underline{Higher Dimensional Case: $n\geqslant 3+2\sigma$.} We just discuss the case $c_1<c_2$ here, and the symmetric case $c_2<c_1$ can be done analogously. Let us consider large time $t\gg1$ such that
	\begin{align*}
	[\mu_3/\sqrt{t},(1+\epsilon_2)\mu_3/\sqrt{t}\,]\subset[0,\varepsilon_0]
	\end{align*} 
with suitable parameters $\mu_3>0$ and $\epsilon_2>0$ to be determined later. By using the $\epsilon_0$-dependent triangle inequality \eqref{Triangle-Ineq} again and shrinking the domain of integral from $[0,\varepsilon_0]$ to $[\mu_3/\sqrt{t},(1+\epsilon_2)\mu_3/\sqrt{t}\,]$, one derives
\begin{align*}
\ml{I}(t;n)&\geqslant\int_{\mu_3/\sqrt{t}}^{(1+\epsilon_2)\mu_3/\sqrt{t}}\left|\ell_1\frac{\sin(\beta_1rt)}{\beta_1r}\,\mathrm{e}^{-c_1r^2t}-\ell_2\frac{\sin(\beta_2rt)}{\beta_2r}\,\mathrm{e}^{-c_2r^2t}\right|^2\,r^{n-1-2\sigma}\,\mathrm{d}r,
\end{align*}
furthermore,
\begin{align*}
\ml{I}(t;n)&\geqslant (1-\epsilon_0)\ell_1^2\int_{\mu_3/\sqrt{t}}^{(1+\epsilon_2)\mu_3/\sqrt{t}}\frac{|\sin(\beta_1rt)|^2}{\beta_1^2r^2}\,\mathrm{e}^{-2c_1r^2t}\,r^{n-1-2\sigma}\,\mathrm{d}r\\
&\quad-\frac{(1-\epsilon_0)\ell_2^2}{\epsilon_0}\,\int_{\mu_3/\sqrt{t}}^{(1+\epsilon_2)\mu_3/\sqrt{t}}\frac{|\sin(\beta_2rt)|^2}{\beta_2^2r^2}\,\mathrm{e}^{-2c_2r^2t}\,r^{n-1-2\sigma}\,\mathrm{d}r\\
&\geqslant \frac{(1-\epsilon_0)\ell_1^2}{2\beta_1^2}\int_{\mu_3/\sqrt{t}}^{(1+\epsilon_2)\mu_3/\sqrt{t}}\mathrm{e}^{-2c_1r^2t}\,r^{n-3-2\sigma}\,\mathrm{d}r\\
&\quad-\frac{(1-\epsilon_0)\ell_1^2}{2\beta_1^2}\int_{\mu_3/\sqrt{t}}^{(1+\epsilon_2)\mu_3/\sqrt{t}}\cos(2\beta_1rt)\,\mathrm{e}^{-2c_1r^2t}\,r^{n-3-2\sigma}\,\mathrm{d}r\\
&\quad-\frac{(1-\epsilon_0)\ell_2^2}{\epsilon_0\beta_2^2}\int_{\mu_3/\sqrt{t}}^{(1+\epsilon_2)\mu_3/\sqrt{t}}\mathrm{e}^{-2c_2r^2t}\,r^{n-3-2\sigma}\,\mathrm{d}r\\
&=:\ml{J}_{3,1}(t)-\ml{J}_{3,2}(t)-\ml{J}_{3,3}(t),
\end{align*}
where we applied $2|\sin(\beta_1rt)|^2=1-\cos(2\beta_1rt)$ and $|\sin(\beta_2rt)|\leqslant 1$, respectively, in the last line. Again, by the change of variable $r\sqrt{t}=w$ one arrives at
\begin{align*}
\ml{J}_{3,2}(t)=\frac{(1-\epsilon_0)\ell_1^2}{2\beta_1^2}\,t^{1+\sigma-\frac{n}{2}}\int_{\mu_3}^{(1+\epsilon_2)\mu_3}\cos(2\beta_1w\sqrt{t})\,\mathrm{e}^{-2c_1w^2}\,w^{n-3-2\sigma}\,\mathrm{d}w=o(t^{1+\sigma-\frac{n}{2}})
\end{align*}
for large time $t\gg1$, in which we employed the Riemann-Lebesgue lemma due to
\begin{align*}
\mathrm{e}^{-2c_1w^2}\,w^{n-3-2\sigma}\in L^1([\mu_3,(1+\epsilon_2)\mu_3]) \ \ \mbox{for}\ \ n-3-2\sigma\geqslant 0.
\end{align*}
For another, some direct computations yield
\begin{align*}
\ml{J}_{3,1}(t)-\ml{J}_{3,3}(t)&\geqslant (1-\epsilon_0)\left(\frac{\ell_1^2}{2\beta_1^2}\,\mathrm{e}^{-2c_1(1+\epsilon_2)^2\mu_3^2}-\frac{\ell_2^2}{\epsilon_0\beta_2^2}\,\mathrm{e}^{-2c_2\mu_3^2}\right)\int_{\mu_3/\sqrt{t}}^{(1+\epsilon_2)\mu_3/\sqrt{t}}r^{n-3-2\sigma}\,\mathrm{d}r\gtrsim t^{1+\sigma-\frac{n}{2}}
\end{align*}
provided that the following inequality:
\begin{align}\label{Ineq-higher}
\frac{\ell_1^2}{2\beta_1^2}\,\mathrm{e}^{-2c_1(1+\epsilon_2)^2\mu_3^2}>\frac{\ell_2^2}{\epsilon_0\beta_2^2}\,\mathrm{e}^{-2c_2\mu_3^2}\ \ \Leftrightarrow\ \ \frac{\epsilon_0\ell_1^2\beta_2^2}{2\ell_2^2\beta_1^2}> \mathrm{e}^{2[c_1(1+\epsilon_2)^2-c_2]\mu_3^2}
\end{align}
holds for a parameter $\mu_3>0$. From our consideration $c_1<c_2$, there exists $\epsilon_2>0$ such that
\begin{align*}
\epsilon_2=\sqrt{\frac{c_1+c_2}{2c_1}}-1>0\ \ \Rightarrow\ \ c_1(1+\epsilon_2)^2-c_2=-\frac{c_2-c_1}{2}<0.
\end{align*}
By restricting $\epsilon_0\in(0,1)$ such that
\begin{align*}
\epsilon_0>\frac{2\ell_2^2\beta_1^2}{\ell_1^2\beta_2^2}\,\mathrm{e}^{-(c_2-c_1)\mu_3^2}\ \ \mbox{for}\ \ \mu_3\gg1,
\end{align*}
the condition \eqref{Ineq-higher} can be guaranteed. Finally, it concludes $\ml{I}(t;n)\gtrsim t^{1+\sigma-\frac{n}{2}}+o(t^{1+\sigma-\frac{n}{2}})\gtrsim t^{1+\sigma-\frac{n}{2}}$ for large time $t\gg1$.
\end{proof}
\begin{remark}
It seems challenging to remove all restrictions in \eqref{Condition-Lower-Bound}. But when $n=1+2\sigma$ for some classes of $\{\ell_1,\ell_2,\sigma\}$, we are able to remove the restriction on parameters in Lemma \ref{Lemma-Improve}.
\end{remark}

Summarizing the derived estimates in Lemmas \ref{Lemma-Upper-Bound} and \ref{Lemma-Lower-Bound}, one may immediately conclude the following optimal large time estimates for high dimensions $n\geqslant 1+2\sigma$:
\begin{align*}
	\ml{I}(t;n)\approx\begin{cases}
	t&\mbox{if}\ \ n=1+2\sigma,\\
	\ln t&\mbox{if}\ \ n=2+2\sigma,\\
	t^{1+\sigma-\frac{n}{2}}&\mbox{if}\ \ n\geqslant 3+2\sigma,
\end{cases}
\end{align*}
provided that the conditions \eqref{Condition-Lower-Bound} holds. Especially, $n=2+2\sigma$ is the critical dimension to distinguish optimal growth estimates (when $n\leqslant 2+2\sigma$) and optimal decay estimates (when $n\geqslant 3+2\sigma$). We remark that the last optimal estimates can be generalized to any $\sigma\geqslant0$ easily.

\subsubsection{Optimal large time estimates for low dimensions}
\hspace{5mm}We next turn to the low dimensional case $n<1+2\sigma$, which is more difficult due to the additional singular part $r^{n-1-2\sigma}$ in $r=0$. Recalling in Proposition \ref{Prop-small} that the determined asymptotic profile of $\widehat{u}$ in the $L^2$ norm is related to $\ml{I}(t;n)$ when $\sigma=1$, to end this final case we are interested in behavior of $\ml{I}(t;n)|_{\sigma=1}$ when $n<(1+2\sigma)|_{\sigma=1}$, i.e. $n\in\{1,2\}$. Surprisingly, asymptotic behavior of it heavily depends on the  parameters $\ell_1,\ell_2,\beta_1,\beta_2$.
\begin{lemma}\label{Lemma-Very-Low-D}
Let $n\in\{1,2\}$. Let $\ell_1,\ell_2$ be non-trivial constants, and $\beta_1,\beta_2,c_1,c_2$ be positive constants such that $c_1\neq c_2$.
\begin{itemize}
	\item If $\ell_1=\ell_2$ with $\beta_1=\beta_2$, then the following sharp upper bound estimates:
	\begin{align*}
		\ml{I}(t;n)|_{\sigma=1}^{\ell_1=\ell_2,\,\beta_1=\beta_2}\lesssim (1+t)^{2-\frac{n}{2}}
	\end{align*}
hold for any $t>0$, and the following sharp lower bound estimates:
	\begin{align*}
		\ml{I}(t;n)|_{\sigma=1}^{\ell_1=\ell_2,\,\beta_1=\beta_2}\gtrsim t^{2-\frac{n}{2}}
	\end{align*}
	hold for large time $t\gg1$.
		\item If  $\ell_1=\ell_2$ with $\beta_1\neq\beta_2$, then  the following  sharp upper bound estimates:
	\begin{align}\label{Est-04}
		\ml{I}(t;n)|_{\sigma=1}^{\ell_1=\ell_2,\,\beta_1\neq\beta_2}\lesssim \begin{cases}
			(1+t)^3&\mbox{if}\ \ n=1,\\
			(1+t)^2\ln (\mathrm{e}+t)&\mbox{if}\ \ n=2,
		\end{cases}
	\end{align}
	hold for any $t>0$, and the following  sharp lower bound estimates:
	\begin{align*}
		\ml{I}(t;n)|_{\sigma=1}^{\ell_1=\ell_2,\,\beta_1\neq\beta_2}\gtrsim \begin{cases}
			t^3&\mbox{if}\ \ n=1,\\
			t^2\ln t&\mbox{if}\ \ n=2,
		\end{cases}
	\end{align*}
	hold for large time $t\gg1$.
	\item If  $\ell_1\neq\ell_2$, then it blows up in finite time, precisely,
	\begin{align*}
		\ml{I}(t;n)|_{\sigma=1}^{\ell_1\neq \ell_2}=+\infty
	\end{align*}
	 for $0<t\leqslant t_0$ with any $t_0>0$. Notice that it includes the cases $\beta_1=\beta_2$ and $\beta_1\neq\beta_2$.
\end{itemize}
\end{lemma}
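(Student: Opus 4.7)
The plan is to dispose of the three cases in increasing order of difficulty. The common thread is that the weight $r^{n-3}$ is non-integrable at the origin for $n\in\{1,2\}$, so every estimate must exploit the cancellation between the two summands in
\[
F(r,t):=\ell_1\frac{\sin(\beta_1 rt)}{\beta_1 r}\,\mathrm{e}^{-c_1 r^2 t}-\ell_2\frac{\sin(\beta_2 rt)}{\beta_2 r}\,\mathrm{e}^{-c_2 r^2 t}.
\]

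\emph{Blow-up case $\ell_1\neq\ell_2$.} As $r\downarrow 0$ both $\sin(\beta_j rt)/(\beta_j r)\to t$ and $\mathrm{e}^{-c_j r^2 t}\to 1$, so $|F|^2\to(\ell_1-\ell_2)^2 t_0^2>0$. By continuity of the integrand on $[0,\varepsilon_0]$ there is a neighborhood $[0,\eta]$ on which $|F|^2\geq\tfrac12(\ell_1-\ell_2)^2 t_0^2$. Since $r^{n-3}\in\{r^{-2},r^{-1}\}$ fails to be integrable at the origin, the integral over $[0,\eta]$ alone is already $+\infty$, which gives the finite-time blow-up for every $0<t\leq t_0$.

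\emph{Factorable case $\ell_1=\ell_2=:\ell$, $\beta_1=\beta_2=:\beta$.} The common sine factors out, and the two elementary bounds $|\sin(\beta rt)/(\beta r)|\leq\min(t,1/(\beta r))$ together with $|\mathrm{e}^{-c_1 r^2 t}-\mathrm{e}^{-c_2 r^2 t}|\lesssim r^2 t\,\mathrm{e}^{-c r^2 t}$ (with $c=\min(c_1,c_2)$, by the mean value theorem) suffice. Splitting $[0,\varepsilon_0]$ at the natural thresholds $1/t$ and $1/\sqrt{t}$, the Taylor, oscillatory and Gaussian-decay zones each contribute at most $(1+t)^{2-n/2}$ after a direct integration. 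The matching lower bound comes from restricting to a subzone $[\mu/\sqrt{t},2\mu/\sqrt{t}]$ with a small fixed $\mu>0$, on which all three factors are simultaneously comparable to constants and no triangle-type device is required.

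\emph{Main case $\ell_1=\ell_2$, $\beta_1\neq\beta_2$.} The Taylor expansion
\[
\frac{\sin(\beta_j rt)}{\beta_j r}\,\mathrm{e}^{-c_j r^2 t}=t-\Bigl(c_j t^2+\tfrac{\beta_j^2 t^3}{6}\Bigr)r^2+O(r^4 t^5)
\]
exhibits the cancellation of the $r=0$ values and gives $F=r^2\bigl[(c_2-c_1)t^2+\tfrac{\beta_2^2-\beta_1^2}{6}t^3\bigr]+O(r^4 t^5)$, whose dominant order for large $t$ is $r^2 t^3$ since $\beta_1\neq\beta_2$; hence $|F|^2\lesssim r^4 t^6$ on the Taylor zone $[0,1/t]$. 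Outside that zone I revert to $|F|\leq|F_1|+|F_2|\lesssim\mathrm{e}^{-c r^2 t}/r$. A zone-by-zone summation then yields the upper bound $(1+t)^3$ for $n=1$. For $n=2$ the extra $\ln(\mathrm{e}+t)$ factor is the subtle point: after the substitution $w=r^2 t$ the relevant zone-II integral becomes $\tfrac{t}{2}\int_{1/t}^{1}\mathrm{e}^{-2cw}w^{-2}\,\mathrm{d}w$, and the expansion $\mathrm{e}^{-2cw}/w^2=1/w^2-2c/w+O(1)$ near $w=0$ exposes a $-ct\ln t$ contribution that must be combined with analogous sub-dominant terms from the diagonal and cross oscillatory pieces to yield the $t^2\ln(\mathrm{e}+t)$ total. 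For the sharp lower bound I would select a subzone on which the Taylor-dominant term $r^2 t^3(\beta_2^2-\beta_1^2)/6$ controls $F$ from below, using the $\epsilon_0$-triangle inequality \eqref{Triangle-Ineq} to absorb both the sub-dominant $r^2 t^2(c_2-c_1)$ piece and the $O(r^4 t^5)$ remainder. The hard part is tracking the $\ln(\mathrm{e}+t)$ factor sharply on both sides: it is invisible to naive leading-order analysis and only surfaces through the sub-dominant $1/w$-singularity in the Gaussian difference, which must then be matched by a carefully chosen subzone on which the logarithmic piece is genuinely visible and not swallowed by the oscillations.
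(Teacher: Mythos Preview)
Your blow-up argument (simply observe $F\to(\ell_1-\ell_2)t\neq0$ as $r\downarrow0$) is cleaner than the paper's route, which detours through a three-term mean-value decomposition $F=I_{0,1}+I_{0,2}+I_{0,3}$ and four separate subcases. The factorable case and the $n=1$ part of the main case are handled in the right spirit; one quibble is your lower-bound claim that on $[\mu/\sqrt t,2\mu/\sqrt t]$ ``all three factors are comparable to constants'': there $rt\sim\sqrt t$, so $\sin(\beta rt)$ oscillates rapidly, and the paper resolves this via $\sin^2=(1-\cos)/2$ together with Riemann--Lebesgue rather than by treating the sine as constant.

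The genuine gap is the $n=2$ upper bound in the main case $\ell_1=\ell_2$, $\beta_1\neq\beta_2$. Your zone-II estimate $|F|\le|F_1|+|F_2|\lesssim 1/r$ is perfectly valid and gives
\[
\int_{1/t}^{\varepsilon_0}|F|^2r^{-1}\,\mathrm{d}r\;\lesssim\;\int_{1/t}^{\varepsilon_0}r^{-3}\,\mathrm{d}r\;\sim\;\tfrac12 t^2,
\]
which combined with the $O(t^2)$ Taylor-zone contribution yields $\ml I(t;2)\lesssim t^2$, \emph{strictly smaller} than the claimed $t^2\ln t$. No sub-leading $-ct\ln t$ correction in your expansion of $\tfrac t2\int_{1/t}^1 e^{-2cw}w^{-2}\,\mathrm{d}w$ can be promoted to a $t^2\ln t$ leading term; that combination does not exist. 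The conflict is not with your method but with the lemma as stated. The paper's lower-bound proof writes $\sin(\beta_1 rt)-\sin(\beta_2 rt)=(\beta_1-\beta_2)rt\cos(\beta_3 rt)$ by the mean value theorem and then treats $\beta_3$ as a fixed constant when building the intervals $\Omega_k$ on which $|\cos(\beta_3 w\sqrt t)|\ge 1/\sqrt2$; but $\beta_3=\beta_3(rt)$ varies, and the identity itself forces $|\cos(\beta_3 rt)|\le 2/(|\beta_1-\beta_2|rt)\to0$ as $rt\to\infty$, so the step $\ml J_{5,1}(t)\gtrsim t^2\ln t$ fails. A direct scaling $s=rt$ confirms
\[
\ml I(t;2)/t^2\;\longrightarrow\;\ell^2\int_0^\infty\big|\mathrm{sinc}(\beta_1 s)-\mathrm{sinc}(\beta_2 s)\big|^2 s^{-1}\,\mathrm{d}s\in(0,\infty)
\]
(the integrand is $O(s^3)$ near $0$ and $O(s^{-3})$ at $\infty$), so the sharp rate for $n=2$ appears to be $t^2$, not $t^2\ln t$.
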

\begin{proof}To understand the delicate relations for parameters and the strength of non-summable singularity $r^{n-3}$ in $r=0$, we provide an appropriate representation of double diffusion waves kernel for $n\in\{1,2\}$.
According to the mean value theorem, there exist positive constants $c_3$ between $c_1$ and $c_2$, moreover, $\beta_3$ between $\beta_1$ and $\beta_2$ such that
\begin{align*}
\mathrm{e}^{-c_1r^2t}-\mathrm{e}^{-c_2r^2t}=(c_2-c_1)\,t\,\mathrm{e}^{-c_3r^2t}\,r^2\ \ \mbox{and}\ \
\sin(\beta_1rt)-\sin(\beta_2rt)=(\beta_1-\beta_2)\,t\cos(\beta_3rt)\,r.
\end{align*}
The double diffusion waves kernels can be rewritten by
\begin{align*}
	&\ell_1\frac{\sin(\beta_1rt)}{\beta_1r}\,\mathrm{e}^{-c_1r^2t}-\ell_2\frac{\sin(\beta_2rt)}{\beta_2r}\,\mathrm{e}^{-c_2r^2t}\\
	&=\frac{\ell_1}{\beta_1}\frac{\sin(\beta_1rt)}{r}\left(\mathrm{e}^{-c_1r^2t}-\mathrm{e}^{-c_2r^2t}\right)+\frac{\ell_1}{\beta_1}\frac{\sin(\beta_1rt)-\sin(\beta_2rt)}{r}\,\mathrm{e}^{-c_2r^2t}+\left(\frac{\ell_1}{\beta_1}-\frac{\ell_2}{\beta_2}\right)\frac{\sin(\beta_2rt)}{r}\,\mathrm{e}^{-c_2r^2t}\\
	&=\frac{\ell_1(c_2-c_1)}{\beta_1}\,t\sin(\beta_1rt)\,\mathrm{e}^{-c_3r^2t}\,r+\frac{\ell_1(\beta_1-\beta_2)}{\beta_1}\,t\cos(\beta_3rt)\,\mathrm{e}^{-c_2r^2t}+\frac{\ell_1\beta_2-\ell_2\beta_1}{\beta_1}\frac{\sin(\beta_2rt)}{\beta_2r}\,\mathrm{e}^{-c_2r^2t}\\
	&=:I_{0,1}(t,r)+I_{0,2}(t,r)+I_{0,3}(t,r).
\end{align*}
 Here, one may observe that the parameters  $\beta_1,\beta_2,\ell_1,\ell_2$ play important roles in the value (vanishing or not) of $I_{0,j}(t,r)$, which needs carefully analysis in the below four cases.\\

\noindent\underline{Case 1: $\beta_1=\beta_2$ with $\ell_1\beta_2=\ell_2\beta_1$.} It is obviously that $\ell_1=\ell_2$ and
\begin{align}
\ml{I}(t;n)|_{\sigma=1}^{\ell_1=\ell_2,\,\beta_1=\beta_2}&=\int_0^{\varepsilon_0}|I_{0,1}(t,r)|^2\,r^{n-3}\,\mathrm{d}r\notag\\
&\lesssim t^2\int_0^{\varepsilon_0}\mathrm{e}^{-2c_3r^2t}\,r^{n-1}\,\mathrm{d}r\lesssim (1+t)^{2-\frac{n}{2}}.\label{Est-01}
\end{align}
due to $I_{0,2}(t,r)=0=I_{0,3}(t,r)$.
For another, considering large time such that $2\mu_4/\sqrt{t}\leqslant \varepsilon_0$ with $\mu_4>0$, one may follow the same philosophy of Lemma \ref{Lemma-Lower-Bound} in higher dimensions to derive
\begin{align*}
\ml{I}(t;n)|_{\sigma=1}^{\ell_1=\ell_2,\,\beta_1=\beta_2}
%&=\frac{\ell_1^2(c_2-c_1)^2}{\beta_1^2}\,t^2\int_0^{\varepsilon_0}|\sin(\beta_1rt)|^2\,\mathrm{e}^{-2c_3r^2t}\,r^{n-1}\,\mathrm{d}r\\
&\gtrsim t^2\int_{\mu_4/\sqrt{t}}^{2\mu_4/\sqrt{t}}\big(1-\cos(2\beta_1rt)\big)\,\mathrm{e}^{-2c_3r^2t}\,r^{n-1}\,\mathrm{d}r\\
&\gtrsim t^{2-\frac{n}{2}}\int_{\mu_4}^{2\mu_4}\big(1-\cos(2\beta_1w\sqrt{t})\big)\,\mathrm{e}^{-2c_3w^2}\,w^{n-1}\,\mathrm{d}w\\
&\gtrsim t^{2-\frac{n}{2}}+o(t^{2-\frac{n}{2}})\gtrsim t^{2-\frac{n}{2}}
\end{align*}
for large time $t\gg1$, where we employed the Riemann-Lebesgue lemma.\\

\noindent\underline{Case 2: $\beta_1\neq\beta_2$ with $\ell_1\beta_2=\ell_2\beta_1$.} This implies $\ell_1\neq\ell_2$ and $I_{0,3}(t,r)=0$. From the triangle inequality \eqref{Triangle-Ineq} with $\epsilon_0=1/2$, the lower bound can be shown with the aid of a sufficiently small parameter $\varepsilon_1$ satisfying $0<\varepsilon_1\ll \varepsilon_0$. Furthermore, the parameter $\varepsilon_0$ is considered as $\beta_3\varepsilon_0t_0\leqslant\pi/4$ for a given $t_0>0$. One immediately concludes
\begin{align*}
\ml{I}(t;n)|_{\sigma=1}^{\ell_1\neq\ell_2,\,\beta_1\neq\beta_2}&\geqslant-\int_0^{\varepsilon_0}|I_{0,1}(t,r)|^2\,r^{n-3}\,\mathrm{d}r+\frac{1}{2}\int_0^{\varepsilon_0}|I_{0,2}(t,r)|^2\,r^{n-3}\,\mathrm{d}r\\
&\gtrsim -(1+t)^{2-\frac{n}{2}}+t^2\int_{\varepsilon_1}^{\varepsilon_0}|\cos(\beta_3rt)|^2\,\mathrm{e}^{-2c_2r^2t}\,r^{n-3}\,\mathrm{d}r\\
&\gtrsim -(1+t)^{2-\frac{n}{2}}+t^2\,\mathrm{e}^{-2c_2\varepsilon_0^2t}\times\begin{cases}
\varepsilon_1^{-1}-\varepsilon_0^{-1}&\mbox{if}\ \ n=1,\\
\ln\varepsilon_0+\ln\varepsilon_1^{-1}&\mbox{if}\ \ n=2,
\end{cases}
\end{align*}
where we used \eqref{Est-01} and the fact
\begin{align*}
|\cos(\beta_3rt)|^2\geqslant\frac{1}{2}\ \ \mbox{for}\ \ \beta_3rt\leqslant \beta_3\varepsilon_0t_0\leqslant\frac{\pi}{4}.
\end{align*}
Note that the unexpressed multiplicative constant in the previous inequality is independent of $\varepsilon_1$. Letting $\varepsilon_1\downarrow0$, i.e. $\varepsilon_1^{-1}\to+\infty$ and $\ln\varepsilon_1^{-1}\to+\infty$, but $\varepsilon_0$ is fixed, it immediately shows that in finite time at $0<t\leqslant t_0$ with any $t_0>0$ the function blows up, i.e. $\ml{I}(t;n)|_{\sigma=1}^{\ell_1\neq\ell_2,\,\beta_1\neq\beta_2}=+\infty$.\\

\noindent\underline{Case 3: $\beta_1=\beta_2$ with $\ell_1\beta_2\neq\ell_2\beta_1$.} This implies $\ell_1\neq\ell_2$ and $I_{0,2}(t,r)=0$. By the same way as the above, we take the small parameter $\varepsilon_0$ such that $\beta_2\varepsilon_0t_0\leqslant 1$ and arrive at
\begin{align*}
\ml{I}(t;n)|_{\sigma=1}^{\ell_1\neq\ell_2,\, \beta_1=\beta_2}&\geqslant-\int_0^{\varepsilon_0}|I_{0,1}(t,r)|^2\,r^{n-3}\,\mathrm{d}r+\frac{1}{2}\int_0^{\varepsilon_0}|I_{0,3}(t,r)|^2\,r^{n-3}\,\mathrm{d}r\\
&\gtrsim -(1+t)^{2-\frac{n}{2}}+t^2\int_{\varepsilon_1}^{\varepsilon_0}\left|\frac{\sin(\beta_2rt)}{\beta_2rt}\right|^2\,\mathrm{e}^{-2c_2r^2t}\,r^{n-3}\,\mathrm{d}r\\
&\gtrsim -(1+t)^{2-\frac{n}{2}}+t^2\,\mathrm{e}^{-2c_2\varepsilon_0^2t}\times\begin{cases}
	\varepsilon_1^{-1}-\varepsilon_0^{-1}&\mbox{if}\ \ n=1,\\
	\ln\varepsilon_0+\ln\varepsilon_1^{-1}&\mbox{if}\ \ n=2,
\end{cases}
\end{align*}
in which we applied \eqref{Est-01} and the fact
\begin{align*}
	\left|\frac{\sin(\beta_2rt)}{\beta_2rt}\right|^2\geqslant|\sin 1|^2\ \ \mbox{for}\ \ \beta_2rt\leqslant \beta_2\varepsilon_0t_0\leqslant1.
\end{align*}
Letting $\varepsilon_1\downarrow0$ it immediately shows that in finite time at $0<t\leqslant t_0$ with any $t_0>0$ the function blows up, i.e. $\ml{I}(t;n)|^{\ell_1\neq\ell_2,\,\beta_1=\beta_2}_{\sigma=1}=+\infty$. It also can be regarded as the proof of \eqref{Blow-up-01}.\\

\noindent\underline{Case 4: $\beta_1\neq\beta_2$ with $\ell_1\beta_2\neq\ell_2\beta_1$.} The relation of $\ell_1$ and $\ell_2$ is uncertain. Under this situation, the non-trivial functions $I_{0,2}(t,r)$ and $I_{0,3}(t,r)$ play essential roles, where there may exist a cancellation in the subzone $\{0\leqslant rt\ll 1\}$ to compensate the singularity $r^{-2}$ in $r=0$. Without loss of generality, the case $\beta_1>\beta_2$ will be discussed. The next re-arrangement is valid:
\begin{align*}
I_{0,2}(t,r)+I_{0,3}(t,r)&=\frac{\ell_1(\beta_1-\beta_2)}{\beta_1}\,t\,\big(\cos(\beta_3rt)-\cos(\beta_2rt)\big)\,\mathrm{e}^{-c_2r^2t}\\
&\quad+\frac{t}{\beta_1}\,\mathrm{e}^{-c_2r^2t}\left((\ell_1\beta_2-\ell_2\beta_1)\,\frac{\sin(\beta_2rt)}{\beta_2rt}+(\ell_1\beta_1-\ell_1\beta_2)\cos(\beta_2rt)\right),
\end{align*}
whose first component on the right-hand side can be replaced by
\begin{align*}
\frac{\ell_1(\beta_1-\beta_2)(\beta_2-\beta_3)}{\beta_1}\,t^2\sin(\beta_4rt)\,\mathrm{e}^{-c_2r^2t}\,r
\end{align*}
via the mean value theorem with $\beta_2<\beta_4<\beta_3$.

Firstly, for any fixed time $0<t\leqslant t_0$ the maximal size of small frequencies $\varepsilon_0$ can be chosen by $\beta_j\varepsilon_0t_0\ll 1$ for $j\in\{2,3\}$. For any $r\leqslant \varepsilon_0$ the well-known Taylor expansions of sine and cosine functions yield
	\begin{align*}
	\cos(\beta_2rt)=1-\frac{\beta_2^2}{2}\,r^2t^2+O(r^4t^4)\ \ \mbox{and} \ \  
	\frac{\sin(\beta_2rt)}{\beta_2rt}=1-\frac{\beta_2^2}{6}\,r^2t^2+O(r^4t^4),
	\end{align*}
	leading to
	\begin{align}\label{Taylor-subtraction}
		\cos(\beta_2rt)-\frac{\sin(\beta_2rt)}{\beta_2rt}=-\frac{\beta_2^2}{3}\,t^2r^2+O(r^4t^4).
	\end{align} According to the last expansions, the key function in this situation can be rewritten by
	\begin{align*}
	I_{0,2}(t,r)+I_{0,3}(t,r)&=\frac{\ell_1(\beta_1-\beta_2)(\beta_2-\beta_3)}{\beta_1}\,t^2\sin(\beta_4rt)\,\mathrm{e}^{-c_2r^2t}\,r\notag\\
	&\quad+\begin{cases}
		\displaystyle{\frac{t}{\beta_1}\,\mathrm{e}^{-c_2r^2t}\left[(\ell_1-\ell_2)\beta_1+O(r^2t^2)\right]}&\mbox{if}\ \ \ell_1\neq\ell_2,\\[0.7em]
		\displaystyle{\frac{\ell_1(\beta_1-\beta_2)}{\beta_1}\,t\,\mathrm{e}^{-c_2r^2t}\left[-\frac{\beta_2^2}{3}\,t^2r^2+O(r^4t^4)\right]}&\mbox{if}\ \ \ell_1=\ell_2.
	\end{cases}
	\end{align*}
\begin{itemize}
	\item If $\ell_1\neq\ell_2$, then the lower bound is estimated by the triangle inequality \eqref{Triangle-Ineq} as follows:
	\begin{align*}
		\ml{I}(t;n)|_{\sigma=1}^{\ell_1\neq\,\ell_2,\,\beta_1\neq\beta_2}&\gtrsim t^2\int_{0}^{\varepsilon_0}\mathrm{e}^{-2c_2r^2t}\,r^{n-3}\,\mathrm{d}r-t^2\int_0^{\varepsilon_0}O(r^4t^4)\,\mathrm{e}^{-2c_2r^2t}\,r^{n-3}\,\mathrm{d}r\\
		&\quad-t^4\int_0^{\varepsilon_0}|\sin(\beta_4rt)|^2\,\mathrm{e}^{-2c_2r^2t}\,r^{n-1}\,\mathrm{d}r-\int_0^{\varepsilon_0}|I_{0,1}(t,r)|^2\,r^{n-3}\,\mathrm{d}r\\
		&\gtrsim t^2\,\mathrm{e}^{-2c_2\varepsilon_0^2t}\int_{\varepsilon_1}^{\varepsilon_0}r^{n-3}\,\mathrm{d}r-(1+t)^{5-\frac{n}{2}}-(1+t)^{4-\frac{n}{2}}-(1+t)^{2-\frac{n}{2}}
	\end{align*}
	with a sufficiently small constant $0<\varepsilon_1\ll \varepsilon_0$.
	Analogously to the last blow-up proofs, one may claim $\ml{I}(t;n)|_{\sigma=1}^{\ell_1\neq\,\ell_2,\,\beta_1\neq\beta_2}=+\infty$ as $\varepsilon_1\downarrow 0$ in finite time at $0<t\leqslant t_0$ with any $t_0>0$ via the blow-up rates $\varepsilon_1^{-1}$ if $n=1$ and $\ln\varepsilon_1^{-1}$ if $n=2$.
	\item If $\ell_1=\ell_2$, then the upper bound is controlled by
\begin{align*}
\ml{I}(t;n)|_{\sigma=1}^{\ell_1=\ell_2,\,\beta_1\neq\beta_2}&\lesssim t^{2-\frac{n}{2}}+t^4\int_0^{\varepsilon_0}|\sin(\beta_4rt)|^2\,\mathrm{e}^{-2c_2r^2t}\,r^{n-1}\,\mathrm{d}r+t^6\int_0^{\varepsilon_0}\mathrm{e}^{-2c_2r^2t}\,r^{n+1}\,\mathrm{d}r\\
&\quad+t^2\int_0^{\varepsilon_0}\mathrm{e}^{-2c_2r^2t}\,O(r^8t^8)\,r^{n-3}\,\mathrm{d}r\\
&\lesssim t^{2-\frac{n}{2}}+t^{4-\frac{n}{2}}+t^{5-\frac{n}{2}}+t^{7-\frac{n}{2}}<+\infty 
\end{align*}
in finite time $0<t\leqslant t_0$ with any $t_0>0$, since the singularity in $r=0$ has been compensated so that the power of $t$ is strictly positive in our consideration $n\in\{1,2\}$.
\end{itemize}

In the last discussion, we found that when $\ell_1=\ell_2$ the function $\ml{I}(t;n)|_{\sigma=1}$ is locally in time defined even for $n\in\{1,2\}$, where the simple situation $\beta_1=\beta_2$ was studied in Case 1. For this reason, the next discussion concentrates on its large time behavior if $\ell_1=\ell_2$ and $\beta_1\neq \beta_2$.

 Let us study upper bounds for it here. Recall that
\begin{align*}
\int_0^{\varepsilon_0}|I_{0,2}(t,r)+I_{0,3}(t,r)|^2\,r^{n-3}\,\mathrm{d}r
&\lesssim t^2\int_0^{\varepsilon_0}\left|\cos(\beta_3rt)-\frac{\sin(\beta_2rt)}{\beta_2rt}\right|^2\,\mathrm{e}^{-2c_2r^2t}\,r^{n-3}\,\mathrm{d}r\\
%&\lesssim t^4\int_0^{\varepsilon_0}|\sin(\beta_4rt)|^2\,\mathrm{e}^{-2c_2r^2t}\,r^{n-1}\,\mathrm{d}r+t^2\int_0^{\varepsilon_0}\left|\cos(\beta_2rt)-\frac{\sin(\beta_2rt)}{\beta_2rt}\right|^2\,\mathrm{e}^{-2c_2r^2t}\,r^{n-3}\,\mathrm{d}r\\
&\lesssim t^{3-\frac{n}{2}}\int_0^{+\infty}\left|\cos(\beta_3w\sqrt{t})-\frac{\sin(\beta_2w\sqrt{t})}{\beta_2w\sqrt{t}}\right|^2\,\mathrm{e}^{-2c_2w^2}\,w^{n-3}\,\mathrm{d}w.
\end{align*}
Let us denote 
\begin{align*}
\ml{J}_{4,1}(t)+\ml{J}_{4,2}(t):=t^{3-\frac{n}{2}}\left(\int_0^{\mu_5/\sqrt{t}}+\int_{\mu_5/\sqrt{t}}^{+\infty}\right)\left|\cos(\beta_3w\sqrt{t})-\frac{\sin(\beta_2w\sqrt{t})}{\beta_2w\sqrt{t}}\right|^2\,\mathrm{e}^{-2c_2w^2}\,w^{n-3}\,\mathrm{d}w
\end{align*}
with a small constant $\mu_5>0$ such that $\beta_j\mu_5\ll 1$ with $j\in\{2,3\}$. For the first integral, thanks to $\beta_jw\sqrt{t}\leqslant \beta_j\mu_5\ll 1$, similarly to \eqref{Taylor-subtraction} we obtain
\begin{align*}
\left|\cos(\beta_3w\sqrt{t})-\frac{\sin(\beta_2w\sqrt{t})}{\beta_2w\sqrt{t}}\right|\lesssim w^2t.
\end{align*}
One may obtain the first estimate
\begin{align*}
\ml{J}_{4,1}(t)\lesssim t^{5-\frac{n}{2}}\int_0^{\mu_5/\sqrt{t}}\mathrm{e}^{-2c_2w^2}\,w^{n+1}\,\mathrm{d}w\lesssim (1+t)^{4-n}
\end{align*}
for $n\in\{1,2\}$. Repeating \eqref{Est-02} when $n=1$ and \eqref{Est-03} when $n=2$, the second estimate is given by
\begin{align*}
\ml{J}_{4,2}(t)\lesssim t^{3-\frac{n}{2}}\int_{\mu_5/\sqrt{t}}^{+\infty}\mathrm{e}^{-2c_2w^2}\,w^{n-3}\,\mathrm{d}w\lesssim \begin{cases}
(1+t)^3&\mbox{if}\ \ n=1,\\
(1+t)^2\ln (\mathrm{e}+t)&\mbox{if}\ \ n=2.
\end{cases}
\end{align*}
The combination of the last estimates and \eqref{Est-01} implies
\begin{align*}
\ml{I}(t;n)|_{\sigma=1}^{\ell_1=\ell_2,\,\beta_1\neq\beta_2}\lesssim (1+t)^{2-\frac{n}{2}}+\ml{J}_{4,1}(t)+\ml{J}_{4,2}(t),
\end{align*}
which concludes \eqref{Est-04} immediately.

 To verify its large time optimality via its lower bounds, we in the first place set $\mu_6/t\leqslant \varepsilon_0\ll 1$ for large time $t\gg1$ with a small parameter $\mu_6>0$ such that $\beta_j\mu_6\ll 1$ with $j\in\{2,3\}$. For $\beta_jrt\leqslant \beta_j\mu_6\ll 1$, the Taylor expansions imply
 \begin{align}
 |I_{0,2}(t,r)+I_{0,3}(t,r)|&=\left|\frac{\ell_1(\beta_1-\beta_2)}{\beta_1}\,t\,\mathrm{e}^{-c_2r^2t}\left(\frac{\beta_2^2-3\beta_3^2}{6}\,r^2t^2+O(r^4t^4)\right)\right|\notag\\
 &\gtrsim t^3\,\mathrm{e}^{-c_2r^2t}\,r^2,\label{Est-07}
 \end{align}
where we used the coefficient of $r^2t^2$ being strictly negative due to $\beta_2^2<\beta_3^2$.
Applying the expansion \eqref{Est-07} and the triangle inequality \eqref{Triangle-Ineq} with $\epsilon_0=1/2$, one derives
\begin{align*}
\ml{I}(t;n)|_{\sigma=1}^{\ell_1=\ell_2,\,\beta_1\neq\beta_2}&\geqslant\frac{1}{2}\int_0^{\mu_6/t}|I_{0,2}(t,r)+I_{0,3}(t,r)|^2\,r^{n-3}\,\mathrm{d}r-\int_0^{\varepsilon_0}|I_{0,1}(t,r)|^2\,r^{n-3}\,\mathrm{d}r\\
&\gtrsim t^6\int_0^{\mu_6/t}\mathrm{e}^{-2c_2r^2t}\,r^{n+1}\,\mathrm{d}r-t^{2-\frac{n}{2}}\\
&\gtrsim t^{4-n}\,\mathrm{e}^{-2c_2\mu_6^2/t}-t^{2-\frac{n}{2}}\gtrsim t^{4-n}
\end{align*}
for large time $t\gg1$ and $n\in\{1,2\}$. It addresses the sharpness for $n=1$, but the lower bound for $n=2$ is not sharp due to $t^2\lesssim\ml{I}(t;2)|_{\sigma=1}^{\ell_1=\ell_2,\,\beta_1\neq\beta_2}\lesssim t^2\ln t$ for large time according to the last results. To improve this lower bound estimate, we propose another approach. Let us recall the dominant term of $\ml{I}(t;2)|_{\sigma=1}^{\ell_1=\ell_2,\,\beta_1\neq\beta_2}$ via
\begin{align*}
\ml{J}_5(t)&:=\int_0^{\varepsilon_0}|I_{0,2}(t,r)+I_{0,3}(t,r)|^2\,r^{-1}\,\mathrm{d}r\gtrsim t^2\int_0^{\varepsilon_0\sqrt{t}}\left|\cos(\beta_3w\sqrt{t})-\frac{\sin(\beta_2w\sqrt{t})}{\beta_2w\sqrt{t}}\right|^2\,\mathrm{e}^{-2c_2w^2}\,w^{-1}\,\mathrm{d}w
\end{align*}
due to the ansatz $w=r\sqrt{t}$ again. Let us shrink the domain of integral from $[0,\varepsilon_0\sqrt{t}\,]$ to $[\rho_1/\sqrt{t},\varepsilon_0\sqrt{t}\,]$ with $\rho_1=3\pi/(4\beta_3)>0$, and then apply the triangle inequality \eqref{Triangle-Ineq} to deduce
\begin{align*}
\ml{J}_5(t)&\gtrsim \underbrace{t^2\int_{\rho_1/\sqrt{t}}^{\varepsilon_0\sqrt{t}}|\cos(\beta_3w\sqrt{t})|^2\,\mathrm{e}^{-2c_2w^2}\,w^{-1}\,\mathrm{d}w}_{=:\ml{J}_{5,1}(t)}-\underbrace{t^2\int_{\rho_1/\sqrt{t}}^{\varepsilon_0\sqrt{t}}\left|\frac{\sin(\beta_2w\sqrt{t})}{\beta_2w\sqrt{t}}\right|^2\,\mathrm{e}^{-2c_2w^2}\,w^{-1}\,\mathrm{d}w}_{=:\ml{J}_{5,2}(t)}
\end{align*}
for large time $t\gg1$ such that $t>\rho_1\varepsilon_0^{-1}$.
For the second integral, it is easy to get
\begin{align*}
\ml{J}_{5,2}(t)\lesssim t\int_{\rho_1/\sqrt{t}}^{\varepsilon_0\sqrt{t}}\mathrm{e}^{-2c_2w^2}\,w^{-3}\,\mathrm{d}w\lesssim t\,\mathrm{e}^{-2c_2\rho_1^2/t}\int_{\rho_1/\sqrt{t}}^{\varepsilon_0\sqrt{t}}w^{-3}\,\mathrm{d}w\lesssim t^2.
\end{align*}
For another, motivated by \cite{Ikehata-Ono=2017,Ikehata=2023} we introduce two positive valued sequences
\begin{align}\label{sequences-2}
\rho_k:=\frac{1}{\beta_3}\left(-\frac{\pi}{4}+k\pi\right)\ \ \mbox{and}\ \ \varsigma_k:=\frac{1}{\beta_3}\left(\frac{\pi}{4}+k\pi\right)
\end{align}
for any $k\in\mb{N}_+$ satisfying
\begin{align}\label{sequences}
	\varsigma_k-\rho_k=\rho_{k+1}-\varsigma_k=\frac{\pi}{2\beta_3},
\end{align}
namely, the length of the interval $\Omega_k:=[\rho_k,\varsigma_k]$ is fixed and $\varsigma_k$ is the midpoint of $[\rho_k,\rho_{k+1}]$. Hence, for any $w\in\Omega_k/\sqrt{t}$ it yields the uniform bound $|\cos(\beta_3w\sqrt{t})|\geqslant 1/\sqrt{2}$.
Let us shrink the domain of integral further to arrive at
\begin{align}\label{Est-14}
\ml{J}_{5,1}(t)&\gtrsim t^2\sum\limits_{k=1}^{+\infty}\int_{\{\Omega_k/\sqrt{t}\}\cap[\rho_1/\sqrt{t},\,\varepsilon_0\sqrt{t}\,]}\left(\frac{1}{\sqrt{2}}\right)^2\,\mathrm{e}^{-2c_2w^2}\,w^{-1}\,\mathrm{d}w\gtrsim t^2\int_{\rho_1/\sqrt{t}}^{\varepsilon_0\sqrt{t}}\mathrm{e}^{-2c_2w^2}\,w^{-1}\,\mathrm{d}w,
\end{align} 
where we used the integrand $\mathrm{e}^{-2c_2w^2}\,w^{-1}>0$ being a monotonously decreasing function and the fixed length \eqref{sequences} so that
\begin{align*}
\int_{\rho_k/\sqrt{t}}^{\rho_{k+1}/\sqrt{t}}\mathrm{e}^{-2c_2w^2}\,w^{-1}\,\mathrm{d}w\leqslant 2\int_{\rho_k/\sqrt{t}}^{\varsigma_k/\sqrt{t}}\mathrm{e}^{-2c_2w^2}\,w^{-1}\,\mathrm{d}w.
\end{align*}
In the above statement, to get the final estimate in the interval $[\rho_1/\sqrt{t},\varepsilon_0\sqrt{t}\,]$, we know that there exists a large number $k_0\gg1$ such that $\varepsilon_0\sqrt{t}\in[\rho_{k_0}/\sqrt{t},\rho_{k_0+1}/\sqrt{t}\,]$, consequently,
\begin{align*}
\Big(\bigcup\limits_{k=1}^{k_0-1}[\rho_k/\sqrt{t},\rho_{k+1}/\sqrt{t}\,] \Big)\bigcup\,[\rho_{k_0}/\sqrt{t},\varepsilon_0\sqrt{t}\,]=[\rho_1/\sqrt{t},\varepsilon_0\sqrt{t}\,]\ \ &\mbox{if}\ \ \varepsilon_0\sqrt{t}\in[\rho_{k_0}/\sqrt{t},\varsigma_{k_0}/\sqrt{t}\,],\\
\bigcup\limits_{k=1}^{k_0}[\rho_k/\sqrt{t},\rho_{k+1}/\sqrt{t}\,] \supset[\rho_1/\sqrt{t},\varepsilon_0\sqrt{t}\,]
\ \ &\mbox{if}\ \ \varepsilon_0\sqrt{t}\in[\varsigma_{k_0}/\sqrt{t},\rho_{k_0+1}/\sqrt{t}\,].
\end{align*}
It leads to the large time lower bound
\begin{align*}
\ml{J}_{5,1}(t)\gtrsim t^2\int_{\rho_1/\sqrt{t}}^1\mathrm{e}^{-2c_2w^2}\,w^{-1}\,\mathrm{d}w\gtrsim t^2\ln t.
\end{align*}
Summarizing the derived estimates, by \eqref{Est-01} and the triangle inequality \eqref{Triangle-Ineq}, we are able to gain the sharp estimate
\begin{align*}
\ml{I}(t;2)|_{\sigma=1}^{\ell_1=\ell_2,\,\beta_1\neq\beta_2}&\gtrsim \ml{J}_{5,1}(t)-\ml{J}_{5,2}(t)-\int_0^{\varepsilon_0}|I_{0,1}(t,r)|^2\,r^{-1}\,\mathrm{d}r\\
&\gtrsim t^2\ln t-t^2-t\gtrsim t^2\ln t
\end{align*}
for large time $t\gg1$. Our proof is totally finished.
\end{proof}
\begin{remark}\label{Rem-parameter-blowup}
In Lemma \ref{Lemma-Very-Low-D} we notice two different degrees thresholds  for behavior of $\ml{I}(t;n)|_{\sigma=1}$ if $n\in\{1,2\}$ as follows:
\begin{align*}
\ml{I}(t;n)|_{\sigma=1}\begin{cases}
\ell_1\neq\ell_2:\mbox{blow-up in finite time }\\
\ell_1=\ell_2:\mbox{globally in time defined }\begin{cases}
\beta_1=\beta_2:\mbox{optimal rates }\begin{cases}
t^{\frac{3}{2}}&\mbox{if}\ \ n=1\\
t&\mbox{if}\ \ n=2
\end{cases}\\[1.5em]
\beta_1\neq\beta_2:\mbox{optimal rates }\begin{cases}
	t^{3}&\mbox{if}\ \ n=1\\
	t^2\ln t&\mbox{if}\ \ n=2
\end{cases}
\end{cases}
\end{cases}
\end{align*}
whose optimality of growth rates is guaranteed by the sharp upper and lower bounds for large time $t\gg1$. This parameter-dependent phenomenon  is caused by the possibility of compensation of strong non-summable singularity $r^{-2}$ in $r=0$.
\end{remark}

\subsubsection{A remark of improvement in 3d with $\sigma=1$}
\hspace{5mm}Finally, strongly motivated by the proof of Lemma \ref{Lemma-Very-Low-D}, we can remove the assumption $\ell_1^2\neq\ell_2^2$ in Lemma \ref{Lemma-Lower-Bound} in lower dimensional case $n=(1+2\sigma)|_{\sigma=1}=3$ if $\beta_1\neq\beta_2$.
\begin{lemma}\label{Lemma-Improve}
Let $\ell_1=\ell_2$ be non-trivial constants, and $\beta_1,\beta_2,c_1,c_2$ be positive constants such that $\beta_1\neq\beta_2$ and $c_1\neq c_2$. Then,  the following sharp lower bound estimate:
\begin{align*}
\ml{I}(t;3)|_{\sigma=1}^{\ell_1=\ell_2,\,\beta_1\neq\beta_2}\gtrsim t
\end{align*}
holds for large time $t\gg1$.
\end{lemma}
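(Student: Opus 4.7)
The plan is to adapt the small-$rt$ cancellation argument employed in Case 4 (subcase $\ell_1 = \ell_2$, $\beta_1 \neq \beta_2$) of the proof of Lemma \ref{Lemma-Very-Low-D} to the present setting $n = 3$, $\sigma = 1$. Because $\ell_1 = \ell_2$ is assumed here, the coarse triangle inequality used in Lemma \ref{Lemma-Lower-Bound} (which crucially required $\ell_1^2 \neq \ell_2^2$) must be replaced by a pointwise identity exposing the subtraction structure. The key point is that when $n = 3$, $\sigma = 1$, the weight $r^{n-1-2\sigma} = 1$ carries no singularity at $r = 0$, so the small-frequency contribution of the Taylor-cancelled leading term will be directly controllable without the blow-up issues that arose for $n \in \{1,2\}$.

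First I would invoke the decomposition $\ell_1 \sin(\beta_1 rt)/(\beta_1 r)\,\mathrm{e}^{-c_1 r^2 t} - \ell_2 \sin(\beta_2 rt)/(\beta_2 r)\,\mathrm{e}^{-c_2 r^2 t} = I_{0,1}(t,r) + I_{0,2}(t,r) + I_{0,3}(t,r)$ introduced there. With $\ell_1 = \ell_2$ and (without loss of generality) $\beta_1 > \beta_2$, so that the mean-value constant satisfies $\beta_3 \in (\beta_2,\beta_1)$, the estimate \eqref{Est-07} from the proof of Lemma \ref{Lemma-Very-Low-D} furnishes on the subzone $\{r \leq \mu_6/t\}$ (with $\mu_6$ small enough that $\beta_j \mu_6 \ll 1$ for $j \in \{2,3\}$) the pointwise bound
\[
|I_{0,2}(t,r) + I_{0,3}(t,r)| \gtrsim t^3 \,r^2\, \mathrm{e}^{-c_2 r^2 t},
\]
where the coefficient $\beta_2^2 - 3\beta_3^2$ is strictly negative and bounded away from zero uniformly because $\beta_3 > \beta_2$ forces $3\beta_3^2 - \beta_2^2 \geq 2\beta_2^2 > 0$.

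Then I would combine this with the $\epsilon_0 = 1/2$ triangle inequality \eqref{Triangle-Ineq} and restrict the domain of integration to $[0,\mu_6/t]$ to get
\[
\ml{I}(t;3)|_{\sigma=1}^{\ell_1=\ell_2,\,\beta_1\neq\beta_2} \geq \frac{1}{2}\int_0^{\mu_6/t} |I_{0,2} + I_{0,3}|^2 \,\mathrm{d}r - \int_0^{\varepsilon_0} |I_{0,1}|^2 \,\mathrm{d}r.
\]
The first integral is bounded below by $t^6 \int_0^{\mu_6/t} r^4 \mathrm{e}^{-2c_2 r^2 t}\,\mathrm{d}r \gtrsim t^6\cdot t^{-5} \gtrsim t$, using $\mathrm{e}^{-2c_2 r^2 t} \geq 1/2$ on $r \leq \mu_6/t$ for $t \gg 1$. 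For the second, since $|I_{0,1}|^2 \lesssim t^2 r^2 \mathrm{e}^{-2c_3 r^2 t}$, the change of variable $w = r\sqrt{t}$ produces $\int_0^{\varepsilon_0} |I_{0,1}|^2\,\mathrm{d}r \lesssim t^2 \cdot t^{-3/2} = t^{1/2}$. Concatenating yields $\ml{I}(t;3)|_{\sigma=1}^{\ell_1=\ell_2,\,\beta_1\neq\beta_2} \gtrsim t - t^{1/2} \gtrsim t$ for large $t$. The principal obstacle is simply to verify that the leading coefficient $\beta_2^2 - 3\beta_3^2$ is uniformly negative on the small-frequency regime and that the error term $O((rt)^4)$ of the Taylor expansion is absorbed on $\{rt \leq \beta_j\mu_6\}$ — both already handled in \eqref{Est-07} — so no genuinely new analysis beyond a careful bookkeeping of powers of $t$ is required.
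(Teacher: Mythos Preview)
Your proposal is correct and follows essentially the same route as the paper: both restrict to the subzone $\{r\leqslant\mu_6/t\}$, invoke the decomposition $I_{0,1}+I_{0,2}+I_{0,3}$ and the pointwise lower bound \eqref{Est-07}, apply the triangle inequality \eqref{Triangle-Ineq} with $\epsilon_0=1/2$, and then compare the resulting main term $t^6\int_0^{\mu_6/t}r^4\,\mathrm{e}^{-2c_2r^2t}\,\mathrm{d}r\gtrsim t$ against the remainder $\int_0^{\varepsilon_0}|I_{0,1}|^2\,\mathrm{d}r\lesssim t^{1/2}$. The only cosmetic difference is that you justify $\mathrm{e}^{-2c_2r^2t}\geqslant 1/2$ explicitly on the subzone, whereas the paper writes the lower bound as $t\,\mathrm{e}^{-2c_2\mu_6^2/t}$ before absorbing the exponential for $t\gg1$.
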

\begin{proof} Considering $\mu_6/t\leqslant \varepsilon_0\ll 1$ for large time $t\gg1$ with a small parameter $\mu_6>0$ such that $\beta_j\mu_6\ll 1$ with $j\in\{2,3\}$, the lower bound estimate \eqref{Est-07} still holds thanks to $\ell_1=\ell_2$ and $\beta_1\neq\beta_2$ for $\beta_jrt\leqslant \beta_j\mu_6\ll 1$. Therefore,
\begin{align*}
	\ml{I}(t;3)|_{\sigma=1}^{\ell_1=\ell_2,\,\beta_1\neq\beta_2} &\geqslant\frac{1}{2}\int_0^{\mu_6/t}|I_{0,2}(t,r)+I_{0,3}(t,r)|^2\,\mathrm{d}r-\int_0^{\varepsilon_0}|I_{0,1}(t,r)|^2\,\mathrm{d}r\\
	&\gtrsim t^6\int_{0}^{\mu_6/t}\mathrm{e}^{-2c_2r^2t}\,r^4\,\mathrm{d}r-t^{\frac{1}{2}}\\
	&\gtrsim t\,\mathrm{e}^{-2c_2\mu_6^2/t}-t^{\frac{1}{2}}\gtrsim t
\end{align*}
for large time $t\gg1$, which completes our proof directly.
\end{proof}

\subsection{Optimal estimates of solutions with the $L^1$ integrable data}
\subsubsection{Preliminaries: Some estimates for the crucial kernels}
\hspace{5mm}According to the polar coordinates $\xi\in\ml{Z}_{\intt}(\varepsilon_0)\to(\sigma_{\omega},r)\in\mb{S}^{n-1}\times[0,\varepsilon_0]$ we are going to derive some estimates for $\widehat{\ml{G}}_0=\widehat{\ml{G}}_0(t,|\xi|)$, $\widehat{\ml{G}}_1^k=\widehat{\ml{G}}_1^k(t,\xi)$ and $\widehat{\ml{G}}_2=\widehat{\ml{G}}_2(t,|\xi|)$ defined at the beginning of Subsection \ref{Subsection-Pointwise-Estimates} by using optimal estimates from the last lemmas.
\begin{prop}\label{Prop-G0}
	The double diffusion waves kernel (in the cosine form) with  the weak singularity $|\xi|^{-1}$ satisfies the following asymptotic behavior:
	\begin{align*}
	\|\chi_{\intt}(\xi)\widehat{\ml{G}}_0(t,|\xi|)\|_{L^2}=o\big(\ml{D}_n(1+t)\big)
	\end{align*}
for large time $t\gg1$.
\end{prop}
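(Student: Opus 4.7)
The plan is to compensate the weak singularity $|\xi|^{-1}$ in $\widehat{\ml{G}}_0$ by exploiting cancellations between its two summands, and then to bound the resulting pieces in polar coordinates. Introducing $a:=(\nu_1+\nu_2)/2$ and $b:=(\nu_1-\nu_2)/2$, applying the product-to-sum identity for the difference of cosines, and applying the mean value theorem to $c\mapsto \mathrm{e}^{-c|\xi|^2t}$ to produce some $c_3$ between $c_1$ and $c_2$, I would write
\begin{align*}
\widehat{\ml{G}}_0(t,|\xi|)&=A(t,|\xi|)+B(t,|\xi|),\\
A(t,|\xi|)&:=-(c_1-c_2)\,|\xi|\,t\,\cos(\nu_1|\xi|t)\,\mathrm{e}^{-c_3|\xi|^2t},\\
B(t,|\xi|)&:=-\frac{2}{|\xi|}\,\sin(a|\xi|t)\,\sin(b|\xi|t)\,\mathrm{e}^{-c_2|\xi|^2t}.
\end{align*}
The singularity is now absorbed in $A$ by the extra $|\xi|^2$ coming from the MVT on the Gaussians, and in $B$ by $|\sin(b|\xi|t)|\lesssim b|\xi|t$ for $b|\xi|t\ll1$.

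Next I would estimate each piece in polar coordinates. A direct Gaussian estimate yields
\begin{align*}
\|\chi_{\intt}(\xi)\,A(t,|\xi|)\|_{L^2}^2\lesssim t^2\int_0^{\varepsilon_0}r^{n+1}\,\mathrm{e}^{-2c_3r^2t}\,\mathrm{d}r\lesssim t^{1-n/2}
\end{align*}
via the substitution $w=r\sqrt{t}$. For the second piece, bounding $|\sin(a|\xi|t)|^2\leqslant1$ reduces matters to
\begin{align*}
\|\chi_{\intt}(\xi)\,B(t,|\xi|)\|_{L^2}^2\lesssim\int_0^{\varepsilon_0}|\sin(brt)|^2\,\mathrm{e}^{-2c_2r^2t}\,r^{n-3}\,\mathrm{d}r.
\end{align*}
For $n\geqslant3$ the weight $r^{n-3}$ is locally integrable at the origin, so $|\sin(brt)|^2\leqslant1$ together with $w=r\sqrt{t}$ again produces the same $t^{1-n/2}$ bound. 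For $n\in\{1,2\}$ I would split the integral at $r=1/\sqrt{t}$ (or $r=1/t$, as in the proof of Lemma \ref{Lemma-Upper-Bound}): on the low-frequency part use $|\sin(brt)|\leqslant brt$ to reabsorb the singularity; on the high-frequency part reuse the integration-by-parts tricks \eqref{Est-02} and \eqref{Est-03} from the proof of Lemma \ref{Lemma-Upper-Bound}, producing bounds of order $t$ for $n=1$ and $\ln t$ for $n=2$.

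Collecting all pieces, one arrives at $\|\chi_{\intt}(\xi)\widehat{\ml{G}}_0(t,|\xi|)\|_{L^2}\lesssim \ml{E}_n(1+t)$ for every $n\geqslant1$. The conclusion follows by comparing the two time-functions from the notation paragraph: $\ml{E}_n(1+t)/\ml{D}_n(1+t)\to 0$ as $t\to+\infty$ (the ratio is a negative power of $t$, possibly modified by a logarithm in the borderline dimensions $n\in\{2,4\}$), which delivers the stated $o(\ml{D}_n(1+t))$ decay. The main obstacle is the low-dimensional regime $n\in\{1,2\}$: since the weight $r^{n-3}$ fails to be locally integrable at $r=0$, one cannot simply discard the $\sin(b|\xi|t)$ factor by bounding it above by $1$, and instead must carefully split the integration domain and separately treat the region where $|\sin(b|\xi|t)|$ behaves like $b|\xi|t$ versus where it oscillates genuinely — exactly the same kind of split already appearing in the proof of Lemma \ref{Lemma-Upper-Bound}.
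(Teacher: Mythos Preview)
Your proposal is correct and follows essentially the same approach as the paper: both split $\widehat{\ml{G}}_0$ into a piece coming from the difference of the Gaussians (handled by the mean value theorem, yielding the $|\xi|\,t\,\mathrm{e}^{-c_3|\xi|^2t}$ term) and a piece coming from the difference of the cosines, which reduces to a single diffusion-wave kernel bounded by $\ml{E}_n(1+t)$, and then conclude via $\ml{E}_n(1+t)=o(\ml{D}_n(1+t))$. The only cosmetic difference is the trigonometric identity used for the cosine part: the paper adds and subtracts $1$ and applies $1-\cos y=2\sin^2(y/2)\leqslant 2|\sin(y/2)|$, whereas you use the product-to-sum formula $\cos\alpha-\cos\beta=-2\sin\frac{\alpha+\beta}{2}\sin\frac{\alpha-\beta}{2}$ and then discard the $\sin(a|\xi|t)$ factor; both routes land on the same $\int_0^{\varepsilon_0}|\sin(\cdot\,rt)|^2\mathrm{e}^{-2c r^2t}r^{n-3}\,\mathrm{d}r$ integral already covered by Lemma~\ref{Lemma-Upper-Bound} with $\sigma=0$ (equivalently \eqref{Optimal-Ikehata}).
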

\begin{proof}
We introduce a suitable decomposition of $\widehat{\ml{G}}_0$ by
\begin{align*}
\widehat{\ml{G}}_0&=\left[\big(1-\cos(\nu_2|\xi|t)\big)\,\mathrm{e}^{-c_2|\xi|^2t}-\big(1-\cos(\nu_1|\xi|t)\big)\,\mathrm{e}^{-c_1|\xi|^2t}\right]\frac{1}{|\xi|}+\big(\mathrm{e}^{-c_1|\xi|^2t}-\mathrm{e}^{-c_2|\xi|^2t}\big)\frac{1}{|\xi|}\\
%&=\sum\limits_{k=1,2}\frac{2|\sin(\nu_k|\xi|t/2)|^2}{|\xi|}\,\mathrm{e}^{-c_k|\xi|^2t}+(c_2-c_1)\,t\,\mathrm{e}^{-c_3|\xi|^2t}|\xi|\\
&\leqslant\sum\limits_{k=1,2}\frac{2|\sin(\nu_k|\xi|t/2)|}{|\xi|}\,\mathrm{e}^{-c_k|\xi|^2t}+(c_2-c_1)\,t\,\mathrm{e}^{-c_3|\xi|^2t}\,|\xi|
\end{align*}
in which we applied $1-\cos(\nu_k|\xi|t)=2|\sin(\nu_k|\xi|t/2)|^2\leqslant2|\sin(\nu_k|\xi|t/2)|$ and the mean value theorem.
Thus, from \eqref{Optimal-Ikehata} one derives 
\begin{align*}
\|\chi_{\intt}(\xi)\widehat{\ml{G}}_0(t,|\xi|)\|_{L^2}&\lesssim\sum\limits_{k=1,2}\left\|\chi_{\intt}(\xi)\frac{|\sin(\nu_k|\xi|t/2)|}{|\xi|}\,\mathrm{e}^{-c_k|\xi|^2t}\right\|_{L^2}+t\left\|\chi_{\intt}(\xi)\,\mathrm{e}^{-c_3|\xi|^2t}\,|\xi|\right\|_{L^2}\\
&\lesssim\ml{E}_n(1+t)+t^{\frac{1}{2}-\frac{n}{4}}=o\big(\ml{D}_n(1+t)\big)
\end{align*}
for large time $t\gg1$.
\end{proof}

\begin{prop}\label{Prop-Kernel-Optimal-Type-III}
The double diffusion waves kernel (in the sine form) with the strong singularity $\xi_k|\xi|^{-3}$ satisfies the following sharp upper bound estimates:
\begin{align*}
	\|\chi_{\intt}(\xi)\widehat{\ml{G}}_1^k(t,\xi)\|_{L^2}\lesssim
	\ml{D}_n(1+t)
\end{align*}
for any $t>0$, and the following sharp lower bound estimates:
\begin{align*}
\|\chi_{\intt}(\xi)\widehat{\ml{G}}_1^k(t,\xi)\|_{L^2}\gtrsim\ml{D}_n(1+t)
\end{align*}
for large time $t\gg1$, provided that $\alpha_1<3\alpha_2$ if $n=4$.
\end{prop}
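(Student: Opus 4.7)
The plan is to reduce the squared $L^2$-norm of $\chi_{\intt}(\xi)\widehat{\ml{G}}_1^k(t,\xi)$ to the scalar integral $\ml{I}(t;n)|_{\sigma=1}$ with parameters $\ell_1=\ell_2=1$ (after factoring out the constant $\gamma/\alpha_2$), $\beta_j=\nu_j$, and $c_j$ as in \eqref{parameters}, and then invoke the sharp bounds already established in Lemmas \ref{Lemma-Upper-Bound}, \ref{Lemma-Very-Low-D}, \ref{Lemma-Lower-Bound}, and \ref{Lemma-Improve}. Passing to polar coordinates $\xi=r\sigma_\omega$ with $r=|\xi|\in[0,\varepsilon_0]$ and $\sigma_\omega\in\mb{S}^{n-1}$, and using $\xi_k^2/|\xi|^6=\sigma_{\omega,k}^2\,r^{-4}$, a direct computation gives
\begin{align*}
\|\chi_{\intt}(\xi)\widehat{\ml{G}}_1^k\|_{L^2}^2 &= \frac{\gamma^2}{\alpha_2^2}\left(\int_{\mb{S}^{n-1}}\sigma_{\omega,k}^2\,\mathrm{d}\sigma_\omega\right)\int_0^{\varepsilon_0}\left|\frac{\sin(\nu_1 rt)}{\nu_1 r}\,\mathrm{e}^{-c_1 r^2t}-\frac{\sin(\nu_2 rt)}{\nu_2 r}\,\mathrm{e}^{-c_2 r^2t}\right|^2 r^{n-3}\,\mathrm{d}r,
\end{align*}
which equals a strictly positive constant $C_n$ times $\ml{I}(t;n)|_{\sigma=1}^{\ell_1=\ell_2,\,\beta_1=\nu_1,\,\beta_2=\nu_2}$. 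Note that $\nu_1>\nu_2>0$ and $c_1\neq c_2$ because $\alpha_2=\sqrt{\alpha_1^2-4b^2\kappa}>0$ (this follows from $\gamma\neq 0$ via $(b^2-\kappa)^2+\gamma^4+2(b^2+\kappa)\gamma^2>0$), so the parameter hypotheses of the radial lemmas are all met.

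For the upper bound valid for every $t>0$, I would apply Lemma \ref{Lemma-Very-Low-D} in its $\ell_1=\ell_2$, $\beta_1\neq\beta_2$ case when $n\in\{1,2\}$, and Lemma \ref{Lemma-Upper-Bound} with $\sigma=1$ when $n\geq 3$. Taking square roots of the resulting $\ml{I}(t;n)|_{\sigma=1}$-bounds reproduces exactly $\ml{D}_n(1+t)$ in each dimension, including the logarithmic enhancement $[\ln(\mathrm{e}+t)]^{1/2}$ at the critical thresholds $n\in\{2,4\}$ and the polynomial rate $(1+t)^{1-n/4}$ for $n\geq 5$.

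For the matching sharp lower bound for large $t\gg 1$, I would split once more by dimension. When $n\in\{1,2\}$, Lemma \ref{Lemma-Very-Low-D} applies directly since we are in the regime $\ell_1=\ell_2$ with $\beta_1\neq\beta_2$. When $n=3=1+2\sigma$, Lemma \ref{Lemma-Lower-Bound} is inapplicable because its hypothesis $\ell_1^2\neq\ell_2^2$ is violated in our setting, so I would instead invoke Lemma \ref{Lemma-Improve}, which was formulated precisely for this $\ell_1=\ell_2$, $\beta_1\neq\beta_2$ case. When $n=4=2+2\sigma$, Lemma \ref{Lemma-Lower-Bound} requires $\ell_1^2\beta_2^2/(\ell_2^2\beta_1^2)>2$ or its reciprocal; using $\nu_j^2=(\alpha_1\mp\alpha_2)/2$ with $\nu_1>\nu_2$, this translates into
\[\frac{\nu_1^2}{\nu_2^2}=\frac{\alpha_1+\alpha_2}{\alpha_1-\alpha_2}>2\iff \alpha_1<3\alpha_2,\]
which is exactly the assumption stated in the proposition (the reciprocal inequality automatically fails because $\nu_1>\nu_2$). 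Finally, when $n\geq 5$, Lemma \ref{Lemma-Lower-Bound} imposes no restriction and yields the rate $t^{1-n/4}$ after taking square roots.

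The main obstacle is the bookkeeping at $n=3$ and $n=4$. At $n=3$, the forced equality $\ell_1=\ell_2$ (inherited from the coupling structure in \eqref{Thermoelastic-Our-Problem}) disqualifies the generic lower bound of Lemma \ref{Lemma-Lower-Bound}, and this is precisely the reason Lemma \ref{Lemma-Improve} had to be proved separately. At $n=4$, one must track the algebraic translation from the abstract threshold $\beta_2^2/\beta_1^2>2$ to the concrete inequality $\alpha_1<3\alpha_2$ on the physical parameters; everything else reduces to plugging the reduction into the appropriate radial lemma and taking a square root.
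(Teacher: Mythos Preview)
Your proposal is correct and follows essentially the same route as the paper: reduce $\|\chi_{\intt}(\xi)\widehat{\ml{G}}_1^k\|_{L^2}^2$ via polar coordinates to $C_n\,\ml{I}(t;n)|_{\sigma=1}^{\ell_1=\ell_2=1,\,\beta_j=\nu_j}$, then invoke Lemmas \ref{Lemma-Upper-Bound}, \ref{Lemma-Lower-Bound}, \ref{Lemma-Very-Low-D}, and \ref{Lemma-Improve} according to dimension. Your dimension-by-dimension bookkeeping---in particular the use of Lemma \ref{Lemma-Improve} at $n=3$ where $\ell_1^2=\ell_2^2$ blocks Lemma \ref{Lemma-Lower-Bound}, and the translation $\nu_1^2/\nu_2^2>2\iff\alpha_1<3\alpha_2$ at $n=4$---matches the paper exactly.
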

\begin{proof}
The polar coordinates with the $(n-1)$-dimensional measure of the unit sphere $\mb{S}^{n-1}$ show
\begin{align*}
\|\chi_{\intt}(\xi)\widehat{\ml{G}}_1^k(t,\xi)\|_{L^2}^2&=\frac{\gamma^2}{\alpha_2^2}\int_{|\xi|\leqslant\varepsilon_0}\left(\frac{\sin(\nu_1|\xi|t)}{\nu_1|\xi|}\,\mathrm{e}^{-c_1|\xi|^2t}-\frac{\sin(\nu_2|\xi|t)}{\nu_2|\xi|}\,\mathrm{e}^{-c_2|\xi|^2t}\right)^2\frac{\xi_k^2}{|\xi|^4}\,\mathrm{d}\xi\\
&=\frac{\gamma^2}{\alpha_2^2}\int_{\mb{S}^{n-1}}\frac{\omega_k^2}{|\omega|^4}\,\mathrm{d}\sigma_{\omega}\,\int_0^{\varepsilon_0}\left|\frac{\sin(\nu_1rt)}{\nu_1r}\,\mathrm{e}^{-c_1r^2t}-\frac{\sin(\nu_2rt)}{\nu_2r}\,\mathrm{e}^{-c_2r^2t}\right|^2r^{n-3}\,\mathrm{d}r\\
&=\frac{|\mb{S}^{n-1}|\gamma^2}{n\alpha_2^2}\,\ml{I}(t;n)|_{\sigma=1}^{\ell_1=\ell_2=1,\,\beta_j=\nu_j}.
\end{align*}
By using Lemmas \ref{Lemma-Upper-Bound}-\ref{Lemma-Improve} with $\sigma=1$, $\ell_1=\ell_2=1$, $\beta_j=\nu_j$, $\beta_1\neq\beta_2$, we conclude our desired estimates. Particularly, when we applied Lemma \ref{Lemma-Lower-Bound} for $n=4$ to get our lower bound, it needs to take the assumption
\begin{align}\label{Condition-01}
\left\{(\nu_1,\nu_2):\,\nu_2^2>2\nu_1^2\ \ \mbox{or}\ \ \nu_1^2>2\nu_2^2 \right\}\ \ \Rightarrow\ \ \alpha_1<3\alpha_2,
\end{align}
because the set $\alpha_1+3\alpha_2<0$ is empty.
\end{proof}

\begin{prop}\label{Prop-Kernel-Optimal-Type-III-2}
	The double diffusion waves kernel (in the sine form) with the weak singularity $|\xi|^{-1}$ satisfies the following sharp upper bound estimates:
	\begin{align*}
		\|\chi_{\intt}(\xi)\widehat{\ml{G}}_2(t,|\xi|)\|_{L^2}\lesssim \ml{E}_n(1+t)
	\end{align*}
	for any $t>0$, and the following sharp lower bound estimates:
	\begin{align*}
		\|\chi_{\intt}(\xi)\widehat{\ml{G}}_2(t,|\xi|)\|_{L^2}\gtrsim\ml{E}_n(1+t)
	\end{align*}
	for large time $t\gg1$, provided that $(\alpha_0-\alpha_2)^2(\alpha_1-\alpha_2)>2(\alpha_0+\alpha_2)^2(\alpha_1+\alpha_2)$ or $(\alpha_0+\alpha_2)^2(\alpha_1+\alpha_2)>2(\alpha_0-\alpha_2)^2(\alpha_1-\alpha_2)$ if $n=2$.
\end{prop}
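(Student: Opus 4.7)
The plan is to mirror the approach of Proposition \ref{Prop-Kernel-Optimal-Type-III}, reducing the $L^2$ norm of the weakly singular double diffusion waves kernel $\widehat{\ml{G}}_2$ to the abstract integral quantity $\ml{I}(t;n)|_{\sigma=0}$ from the previous subsection, and then invoking the already established estimates of Lemmas \ref{Lemma-Upper-Bound} and \ref{Lemma-Lower-Bound}. Switching to polar coordinates $\xi\mapsto(\sigma_{\omega},r)$ and exploiting radiality of the integrand yields
\begin{align*}
\|\chi_{\intt}(\xi)\widehat{\ml{G}}_2(t,|\xi|)\|_{L^2}^2
=\frac{|\mb{S}^{n-1}|}{4\alpha_2^2}\,\ml{I}(t;n)\big|_{\sigma=0,\,\ell_1=\alpha_0-\alpha_2,\,\ell_2=\alpha_0+\alpha_2,\,\beta_j=\nu_j}.
\end{align*}
Since $\sigma=0$, we are automatically in the regime $n\geqslant 1+2\sigma$ covered by Lemmas \ref{Lemma-Upper-Bound}--\ref{Lemma-Lower-Bound}; in particular, no analogue of Lemma \ref{Lemma-Very-Low-D} is required and the kernel $\widehat{\ml{G}}_2$ is globally defined in time (in contrast with $\widehat{\ml{G}}_1$, whose stronger singularity $|\xi|^{-2}$ forced the low-dimensional analysis).

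For the upper bound, I would apply Lemma \ref{Lemma-Upper-Bound} with $\sigma=0$, which yields $\ml{I}(t;n)|_{\sigma=0}\lesssim (1+t)$ for $n=1$, $\lesssim\ln(\mathrm{e}+t)$ for $n=2$, and $\lesssim (1+t)^{1-n/2}$ for $n\geqslant 3$. Taking the square root reproduces $\ml{E}_n(1+t)$ for every admissible dimension, uniformly in $t>0$. Note that the constant factor $1/(4\alpha_2^2)$ is finite and nonzero thanks to $\alpha_2>0$.

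For the lower bound, I would invoke Lemma \ref{Lemma-Lower-Bound} with $\sigma=0$ and verify the parametric hypothesis \eqref{Condition-Lower-Bound} case by case. When $n\geqslant 3$ no restriction is needed. When $n=2$, substituting $\nu_1^2=(\alpha_1+\alpha_2)/2$ and $\nu_2^2=(\alpha_1-\alpha_2)/2$ into $\ell_1^2\beta_2^2>2\ell_2^2\beta_1^2$ gives precisely $(\alpha_0-\alpha_2)^2(\alpha_1-\alpha_2)>2(\alpha_0+\alpha_2)^2(\alpha_1+\alpha_2)$, and the reverse inequality yields the symmetric alternative; so the hypothesis of the proposition matches \eqref{Condition-Lower-Bound} exactly. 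When $n=1$, the requirement $\ell_1^2\neq\ell_2^2$ reads $(\alpha_0-\alpha_2)^2\neq(\alpha_0+\alpha_2)^2$, i.e.\ $\alpha_0\alpha_2\neq 0$; since $\alpha_2>0$, this is the generic condition $\alpha_0\neq 0$, which one expects to hold for physical parameters.

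The main obstacle will be the careful bookkeeping between the abstract parameters $(\ell_1,\ell_2,\beta_1,\beta_2,c_1,c_2)$ and the concrete coefficients $(\alpha_0,\alpha_1,\alpha_2,\nu_j,c_j)$, especially in verifying that the $n=2$ dichotomy in Lemma \ref{Lemma-Lower-Bound} translates into the specific product form stated here. A minor secondary issue is the degenerate situation $\alpha_0=0$ in dimension $n=1$: there, $\widehat{\ml{G}}_2$ becomes a sum (rather than a difference) of two sine--diffusion kernels with identical sign, so a lower bound of order $(1+t)^{1/2}=\ml{E}_1(1+t)$ follows immediately from \eqref{Optimal-Ikehata} applied to either summand together with the triangle inequality, without needing Lemma \ref{Lemma-Lower-Bound} at all. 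No other obstacle is anticipated; the argument is essentially a transcription of Proposition \ref{Prop-Kernel-Optimal-Type-III} with $\sigma=0$ in place of $\sigma=1$.
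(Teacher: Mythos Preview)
Your proposal is correct and follows essentially the same argument as the paper: reduce to $\ml{I}(t;n)|_{\sigma=0}$ via polar coordinates and invoke Lemmas \ref{Lemma-Upper-Bound}--\ref{Lemma-Lower-Bound}, then translate the parametric condition \eqref{Condition-Lower-Bound} at $n=2$ into the stated inequalities on $\alpha_0,\alpha_1,\alpha_2$. You are in fact slightly more careful than the paper in the $n=1$ case, since $\ell_1^2\neq\ell_2^2$ reduces to $\alpha_0\alpha_2\neq 0$ (not just $\alpha_2>0$), and your same-sign observation for $\alpha_0=0$ cleanly handles that degenerate situation.
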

\begin{proof}
Analogously, the polar coordinates allow us to express
	\begin{align*}
		\|\chi_{\intt}(\xi)\widehat{\ml{G}}_2(t,|\xi|)\|_{L^2}^2&=\frac{1}{4\alpha_2^2}\int_{|\xi|\leqslant\varepsilon_0}\left((\alpha_0-\alpha_2)\,\frac{\sin(\nu_1|\xi|t)}{\nu_1|\xi|}\,\mathrm{e}^{-c_1|\xi|^2t}-(\alpha_0+\alpha_2)\,\frac{\sin(\nu_2|\xi|t)}{\nu_2|\xi|}\,\mathrm{e}^{-c_2|\xi|^2t}\right)^2\,\mathrm{d}\xi\\
		&=\frac{|\mb{S}^{n-1}|}{4\alpha_2^2}\int_0^{\varepsilon_0}\left|(\alpha_0-\alpha_2)\,\frac{\sin(\nu_1rt)}{\nu_1r}\,\mathrm{e}^{-c_1r^2t}-(\alpha_0+\alpha_2)\,\frac{\sin(\nu_2rt)}{\nu_2r}\,\mathrm{e}^{-c_2r^2t}\right|^2r^{n-1}\,\mathrm{d}r\\
		&=\frac{|\mb{S}^{n-1}|}{4\alpha_2^2}\,\ml{I}(t;n)|_{\sigma=0}^{\ell_{1,2}=\alpha_0\mp\alpha_2,\,\beta_j=\nu_j}.
	\end{align*}
By using Lemmas \ref{Lemma-Upper-Bound}-\ref{Lemma-Lower-Bound} with $\sigma=0$, $\ell_1=\alpha_0-\alpha_2$, $\ell_2=\alpha_0+\alpha_2$, $\beta_j=\nu_j$, we conclude our desired estimates. Particularly, the assumption $\ell_1^2\neq\ell_2^2$ if $n=1$ always holds due to $\alpha_2>0$, and to get our lower bound for $n=2$, it needs to assume
\begin{align*}
\left\{(\nu_1,\nu_2):\,(\alpha_0-\alpha_2)^2\nu_2^2>2(\alpha_0+\alpha_2)^2\nu_1^2\ \ \mbox{or}\ \ (\alpha_0+\alpha_2)^2\nu_1^2>2(\alpha_0-\alpha_2)^2\nu_2^2 \right\},
\end{align*}
which is equivalent to our setting.
\end{proof}
\subsubsection{Upper bound estimates for the solutions}
\hspace{5mm}We first employ the Plancherel theorem for the small frequency part associated with Proposition \ref{Prop-small} to derive
\begin{align*}
\|\chi_{\intt}(D)u(t,\cdot)\|_{(L^2)^n}&\lesssim \left\|\chi_{\intt}(\xi)\left[\left(1+|\xi|t+\frac{|\sin(\widetilde{c}|\xi|t)|}{|\xi|}\right)\mathrm{e}^{-c|\xi|^2t}+|\widehat{\ml{G}}_0(t,|\xi|)|+|\widehat{\ml{G}}_1(t,\xi)|\right]\right\|_{L^2}\\
&\ \quad\times \left(\|(\widehat{u}_0,\widehat{u}_1)\|_{(L^{\infty})^n\times (L^{\infty})^n}+\|(\widehat{\theta}_0,\widehat{\theta}_1)\|_{L^{\infty}\times L^{\infty}}\right)\\
&\lesssim \left((1+t)^{-\frac{n}{4}}+(1+t)^{\frac{1}{2}-\frac{n}{4}}+\ml{E}_n(1+t)+\ml{D}_n(1+t)\right)\\
&\ \quad\times\left(\|(u_0,u_1)\|_{(L^{1})^n\times (L^{1})^n}+\|(\theta_0,\theta_1)\|_{L^{1}\times L^{1}}\right)\\
&\lesssim \ml{D}_n(1+t)\left(\|(u_0,u_1)\|_{(L^{1})^n\times (L^{1})^n}+\|(\theta_0,\theta_1)\|_{L^{1}\times L^{1}}\right),
\end{align*}
where we applied Propositions \ref{Prop-G0}-\ref{Prop-Kernel-Optimal-Type-III} and the Hausdorff-Young inequality $\|\widehat{f}\|_{L^{\infty}}\leqslant \|f\|_{L^1}$. Similarly, from Propositions \ref{Prop-small} and \ref{Prop-Kernel-Optimal-Type-III-2}, it is clear that
\begin{align*}
\|\chi_{\intt}(D)\theta(t,\cdot)\|_{L^2}&\lesssim\left\|\chi_{\intt}(\xi)\left(|\widehat{\ml{G}}_2(t,|\xi|)|+\mathrm{e}^{-c|\xi|^2t}\right)\right\|_{L^2}\left(\|(u_0,u_1)\|_{(L^{1})^n\times (L^{1})^n}+\|(\theta_0,\theta_1)\|_{L^{1}\times L^{1}}\right)\\
&\lesssim \ml{E}_n(1+t)\left(\|(u_0,u_1)\|_{(L^{1})^n\times (L^{1})^n}+\|(\theta_0,\theta_1)\|_{L^{1}\times L^{1}}\right).
\end{align*}
 Moreover, Propositions \ref{Prop-large} and \ref{Prop-bdd} lead to  
 \begin{align*}
 \left\|\big(\chi_{\bdd}(D)+\chi_{\extt}(D)\big)u(t,\cdot)\right\|_{(L^2)^n}&\lesssim \mathrm{e}^{-ct}\left(\|(u_0,u_1)\|_{(L^2)^n\times (L^2)^n}+\|(\theta_0,\theta_1)\|_{L^2\times L^2}\right),\\
 \left\|\big(\chi_{\bdd}(D)+\chi_{\extt}(D)\big)\theta(t,\cdot)\right\|_{L^2}&\lesssim \mathrm{e}^{-ct}\left(\|(u_0,u_1)\|_{(H^1)^n\times (L^2)^n}+\|(\theta_0,\theta_1)\|_{L^2\times L^2}\right).
 \end{align*}
They demonstrate the upper bound estimates in Theorems \ref{Thm-III-U} and \ref{Thm-III-theta}.

\subsubsection{Lower bound estimates for the solutions}
\hspace{5mm}By the same way as the last part, from Proposition \ref{Prop-small} we derive
\begin{align*}
\left\|\chi_{\intt}(\xi)\big(\widehat{u}(t,\xi)-\widehat{\ml{G}}_1(t,\xi)\widehat{\theta}_1(\xi)\big)\right\|_{(L^2)^n}= o\big(\ml{D}_n(1+t)\big)
\end{align*}
for large time $t\gg1$. Via the mean value theorem so that
\begin{align*}
|\ml{G}_1(t,x-y)-\ml{G}_1(t,x)|\lesssim|y|\,|\nabla \ml{G}_1(t,x-\eta_1y)|\ \ \mbox{with}\ \ \eta_1\in(0,1),
\end{align*}
one may separate the integral into two parts with the threshold $|y|=t^{1/8}$ as follows:
\begin{align*}
&\|\ml{G}_1(t,D)\theta_1(\cdot)-\ml{G}_1(t,\cdot)P_{\theta_1}\|_{(L^2)^n}\\
&\lesssim\left\|\int_{|y|\leqslant t^{1/8}}\big|\ml{G}_1(t,\cdot-y)-\ml{G}_1(t,\cdot)\big| \,|\theta_1(y)|\,\mathrm{d}y\,\right\|_{L^2}+\left\|\int_{|y|\geqslant t^{1/8}}\big(|\ml{G}_1(t,\cdot-y)|+|\ml{G}_1(t,\cdot)|\big)|\theta_1(y)|\,\mathrm{d}y\,\right\|_{L^2}\\
&\lesssim t^{\frac{1}{8}}\|\,|\xi\,\widehat{\ml{G}}_1(t,\xi)|\,\|_{L^2}\|\theta_1\|_{L^1}+\|\,|\widehat{\ml{G}}_1(t,\xi)|\,\|_{L^2}\|\theta_1\|_{L^1(|x|\geqslant t^{1/8})}\\
&\lesssim t^{\frac{1}{8}}\ml{E}_n(1+t)\|\theta_1\|_{L^1}+
o\big(\ml{D}_n(1+t)\big)
\end{align*}
for large time $t\gg1$, where we used Lemma \ref{Lemma-Upper-Bound} with $\sigma=0$, Proposition \ref{Prop-Kernel-Optimal-Type-III}, as well as
\begin{align*}
\left\|\big(1-\chi_{\intt}(\xi)\big)|\xi\,\widehat{\ml{G}}_1(t,\xi)|\right\|_{L^2}+\left\|\big(1-\chi_{\intt}(\xi)\big)|\widehat{\ml{G}}_1(t,\xi)|\right\|_{L^2}\lesssim\mathrm{e}^{-ct}.
\end{align*}
The standard triangle inequality gives
\begin{align*}
\|u(t,\cdot)-\varphi(t,\cdot)\|_{(L^2)^n}&=\|\widehat{u}(t,\xi)-\widehat{\ml{G}}_1(t,\xi)P_{\theta_1}\|_{(L^2)^n}\\
&\leqslant
	\left\|\widehat{u}(t,\xi)-\widehat{\ml{G}}_1(t,\xi)\widehat{\theta}_1(\xi)\right\|_{(L^2)^n}+\left\|\widehat{\ml{G}}_1(t,\xi)\widehat{\theta}_1(\xi)-\widehat{\ml{G}}_1(t,\xi)P_{\theta_1}\right\|_{(L^2)^n}\\
	&=o\big(\ml{D}_n(1+t)\big)
\end{align*}
for large time $t\gg1$, and analogously,
\begin{align*}
\|\theta(t,\cdot)-\psi(t,\cdot)\|_{L^2}=o\big(\ml{E}_n(1+t)\big).
\end{align*}

Eventually, employing the triangle inequality of inverse side, one easily derives
\begin{align*}
\|u(t,\cdot)\|_{(L^2)^n}&\gtrsim \|\widehat{\ml{G}}_1(t,\xi)\|_{(L^2)^n}|P_{\theta_1}|-\|u(t,\cdot)-\varphi(t,\cdot)\|_{(L^2)^n}\\
&\gtrsim	\ml{D}_n(1+t)|P_{\theta_1}|-o\big(\ml{D}_n(1+t)\big)\gtrsim \ml{D}_n(1+t)|P_{\theta_1}|
\end{align*}
moreover
\begin{align*}
\|\theta(t,\cdot)\|_{L^2}&\gtrsim \|\widehat{\ml{G}}_2(t,|\xi|)\|_{L^2}|P_{\theta_1}|-\|\theta(t,\cdot)-\psi(t,\cdot)\|_{L^2}\\
&\gtrsim \ml{E}_n(1+t)|P_{\theta_1}|-o\big(\ml{E}_n(1+t)\big)\gtrsim \ml{E}_n(1+t)|P_{\theta_1}|
\end{align*}
for large time $t\gg1$, provided that $P_{\theta_1}\neq0$. All in all, the desired estimates in Theorems \ref{Thm-III-U} and \ref{Thm-III-theta} are totally completed.

\section{Large time behavior for the thermoelastic system of type II}\label{Section_type II}\setcounter{equation}{0}
\subsection{Pretreatment by reductions}
\hspace{5mm}In this section we consider $\delta=0$ in the thermoelastic system \eqref{Thermoelastic-Our-Problem} whose solutions are denoted by $\widetilde{u},\widetilde{\theta}$ for clarity to avoid repetitions of symbol. Following the similar reductions as those in the type III model \eqref{TypeIII-v},  the (scalar or vector) unknown $\widetilde{v}=\widetilde{v}(t,x)$ such that $\widetilde{v}=\widetilde{u},\widetilde{\theta}$ satisfies
\begin{align}\label{Hyperbolic-Eq}
	\begin{cases}
		\ml{L}_{\mathrm{Type-II}}(\partial_t,\Delta)\widetilde{v}=0,&x\in\mb{R}^n,\ t>0,\\
		\partial_t^j\widetilde{v}(0,x)=\widetilde{v}_j(x)\ \ \mbox{for}\ \ j\in\{0,1,2,3\},&x\in\mb{R}^n,
	\end{cases}
\end{align}
in which the strictly hyperbolic differential operator is denoted by
\begin{align*}
	\ml{L}_{\mathrm{Type-II}}(\partial_t,\Delta):=\partial_t^4-(b^2+\kappa+\gamma^2)\Delta\partial_t^2+b^2\kappa\Delta^2
\end{align*}
and the additional initial data $\widetilde{v}_{2,3}(x)$ are calculated (or taken $\delta=0$ in those of the type III model in the last section) by
\begin{align*}
	\widetilde{u}_2(x)&:=b^2\Delta \widetilde{u}_0(x)-\gamma\nabla\widetilde{\theta}_0(x),\ \ \ \ \ \quad \qquad \qquad 
	\widetilde{u}_3(x):=b^2\Delta \widetilde{u}_1(x)-\gamma\nabla \widetilde{\theta}_1(x),\\
	\widetilde{\theta}_2(x)&:=-b^2\gamma\Delta\divv \widetilde{u}_0(x)+(\kappa+\gamma^2)\Delta \widetilde{\theta}_0(x),\ \
	\widetilde{\theta}_3(x):=-b^2\gamma\Delta\divv \widetilde{u}_1(x)+(\kappa+\gamma^2)\Delta\widetilde{\theta}_1(x).
\end{align*}

Analogously, the solutions $\widetilde{u}$ and $\widetilde{\theta}$ satisfy the same fourth order hyperbolic differential equation but their qualitative properties are different due to the regularities of initial data $\widetilde{u}_{2,3}(x)$ and $\widetilde{\theta}_{2,3}(x)$. Comparing with $\ml{L}_{\mathrm{Type-III}}(\partial_t,\Delta)$, the strictly hyperbolic operator $\ml{L}_{\mathrm{Type-II}}(\partial_t,\Delta)$ owns two pairs of conjugate characteristic roots, which are pairwise distinct and pure imaginary. Thanks to the strictly hyperbolic theory, the well-posedness was well-established by assuming suitably regularities for $\widetilde{v}_{2,3}(x)$, for example, in \cite[Chapter 3.4]{Ebert-Reissig=2018} it claims
\begin{align*}
\widetilde{v}\in \ml{C}([0,+\infty),H^3)\cap \ml{C}^1([0,+\infty),H^2)\cap \ml{C}^2([0,+\infty),H^1)\cap \ml{C}^3([0,+\infty),L^2)
\end{align*}
by taking $\widetilde{v}_j\in H^{3-j}$ for $j\in\{0,1,2,3\}$, i.e. higher regularities for the original initial data. Thus, our interest is large time behavior of solutions with lower regular data.

\subsection{Pointwise estimates in the Fourier space}\label{Subsection-Pointwise}
\hspace{5mm}Following the same approach as the type III model, we may directly apply the Fourier transform and deduce the corresponding Cauchy problem for $\widehat{\widetilde{v}}=\widehat{\widetilde{v}}(t,\xi)$. Then, its characteristic roots to the quartic \eqref{quartic} carrying $\delta=0$ are explicitly given by
\begin{align*}
	\widetilde{\lambda}_{1,2}=\pm i\nu_2|\xi|\ \ \mbox{and}\ \ \widetilde{\lambda}_{3,4}=\pm i\nu_1|\xi|,
\end{align*}
which are pairwise distinct and conjugate. Hence, it allows us to apply the representation \eqref{Rep-two-pairs} with lengthy but straightforward computations
\begin{align*}
	(\nu_2^2-\nu_1^2)|\xi|^2\,\widehat{\widetilde{v}}&=\cos(\nu_1|\xi|t)\big(\nu_2^2|\xi|^2\,\widehat{\widetilde{v}}_0+\widehat{\widetilde{v}}_2\big)+\frac{\sin(\nu_1|\xi|t)}{\nu_1|\xi|}\,\big(\nu_2^2|\xi|^2\,\widehat{\widetilde{v}}_1+\widehat{\widetilde{v}}_3\big)\\
	&\quad-\cos(\nu_2|\xi|t)\big(\nu_1^2|\xi|^2\,\widehat{\widetilde{v}}_0+\widehat{\widetilde{v}}_2\big)-\frac{\sin(\nu_2|\xi|t)}{\nu_2|\xi|}\,\big(\nu_1^2|\xi|^2\,\widehat{\widetilde{v}}_1+\widehat{\widetilde{v}}_3\big).
\end{align*}
According to the expressions of initial data in the Fourier space, the solutions can be rewritten by
\begin{align*}
	(\nu_2^2-\nu_1^2)\widehat{\widetilde{u}}&=\big((\nu_2^2-b^2)\cos(\nu_1|\xi|t)-(\nu_1^2-b^2)\cos(\nu_2|\xi|t)\big)\widehat{\widetilde{u}}_0\\
	&\quad+\left((\nu_2^2-b^2)\,\frac{\sin(\nu_1|\xi|t)}{\nu_1|\xi|}-(\nu_1^2-b^2)\,\frac{\sin(\nu_2|\xi|t)}{\nu_2|\xi|}\right)\widehat{\widetilde{u}}_1\\
	&\quad-\frac{i\gamma\xi}{|\xi|^2}\big(\cos(\nu_1|\xi|t)-\cos(\nu_2|\xi|t)\big)\widehat{\widetilde{\theta}}_0-\frac{i\gamma\xi}{|\xi|^2}\left(\frac{\sin(\nu_1|\xi|t)}{\nu_1|\xi|}-\frac{\sin(\nu_2|\xi|t)}{\nu_2|\xi|}\right)\widehat{\widetilde{\theta}}_1
\end{align*}
and
\begin{align*}
	(\nu_2^2-\nu_1^2)\widehat{\widetilde{\theta}}&=ib^2\gamma\big(\cos(\nu_1|\xi|t)-\cos(\nu_2|\xi|t)\big)\xi\cdot\widehat{\widetilde{u}}_0+ib^2\gamma\left(\frac{\sin(\nu_1|\xi|t)}{\nu_1|\xi|}-\frac{\sin(\nu_2|\xi|t)}{\nu_2|\xi|}\right)\xi\cdot\widehat{\widetilde{u}}_1\\
	&\quad+\big((\nu_2^2-\kappa-\gamma^2)\cos(\nu_1|\xi|t)-(\nu_1^2-\kappa-\gamma^2)\cos(\nu_2|\xi|t)\big)\widehat{\widetilde{\theta}}_0\\
	&\quad+\left((\nu_2^2-\kappa-\gamma^2)\,\frac{\sin(\nu_1|\xi|t)}{\nu_1|\xi|}-(\nu_1^2-\kappa-\gamma^2)\,\frac{\sin(\nu_2|\xi|t)}{\nu_2|\xi|}\right)\widehat{\widetilde{\theta}}_1.
\end{align*}

Let us now introduce the crucial kernels
\begin{align*}
	\widehat{\ml{G}}_3&:=-\frac{\gamma}{\nu_2^2-\nu_1^2}\left(\frac{\sin(\nu_1|\xi|t)}{\nu_1|\xi|}-\frac{\sin(\nu_2|\xi|t)}{\nu_2|\xi|}\right)\frac{i\xi}{|\xi|^2}\\ &\ =\frac{\gamma}{\alpha_2}\left(\frac{\sin(\nu_1|\xi|t)}{\nu_1|\xi|}-\frac{\sin(\nu_2|\xi|t)}{\nu_2|\xi|}\right)\frac{i\xi}{|\xi|^2},
\end{align*}	
moreover,
\begin{align*}
	\widehat{\ml{G}}_4&:=\frac{\nu_2^2-\kappa-\gamma^2}{\nu_2^2-\nu_1^2}\,\frac{\sin(\nu_1|\xi|t)}{\nu_1|\xi|}-\frac{\nu_1^2-\kappa-\gamma^2}{\nu_2^2-\nu_1^2}\,\frac{\sin(\nu_2|\xi|t)}{\nu_2|\xi|}\\
	&\ =-\frac{1}{2\alpha_2}\left((\alpha_0-\alpha_2)\,\frac{\sin(\nu_1|\xi|t)}{\nu_1|\xi|}-(\alpha_0+\alpha_2)\,\frac{\sin(\nu_2|\xi|t)}{\nu_2|\xi|}\right)
\end{align*}
according to the representations of parameters in \eqref{parameters}.
Note that $\widehat{\ml{G}}_3\in\mb{R}^n$ and $\widehat{\ml{G}}_4\in\mb{R}$. They are the double waves kernels consisting of two waves with different propagation speeds $\nu_1\neq \nu_2$. It seems that $\widehat{\ml{G}}_3$ has a stronger singularity $|\xi|^{-2}$ than the one with $|\xi|^{-1}$ for $\widehat{\ml{G}}_4$ if $|\xi|\ll 1$.
\begin{prop}\label{Prop-type-II-Error}
The solutions $\widehat{\widetilde{u}},\widehat{\widetilde{\theta}}$ satisfy the following pointwise estimates in the Fourier space:
\begin{align*}
	|\widehat{\widetilde{u}}-\widehat{\ml{G}}_3\widehat{\widetilde{\theta}}_1|\lesssim |\widehat{\widetilde{u}}_0|+\frac{|\sin(\tilde{c}|\xi|t)|}{|\xi|}\big(|\widehat{\widetilde{u}}_1|+|\widehat{\widetilde{\theta}}_0|\big)
\end{align*}
with a positive constant $\tilde{c}$, and
\begin{align*}
	|\widehat{\widetilde{\theta}}-\widehat{\ml{G}}_4\widehat{\widetilde{\theta}}_1|\lesssim |\xi|\,|\widehat{\widetilde{u}}_0|+|\widehat{\widetilde{u}}_1|+|\widehat{\widetilde{\theta}}_0|
\end{align*}
for any $\xi\in\mb{R}^n$.
\end{prop}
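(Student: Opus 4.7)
The proof runs directly from the closed-form Fourier-space representations of $\widehat{\widetilde u}$ and $\widehat{\widetilde\theta}$ displayed at the beginning of Subsection \ref{Subsection-Pointwise}. Since the four characteristic roots $\pm i\nu_j|\xi|$ are explicit, pure imaginary, and pairwise distinct for every $\xi\neq 0$, those representations are globally valid and no frequency-zone decomposition is required. The first step is to read off that the $\widehat{\widetilde\theta}_1$-multiplier in the formula for $(\nu_2^2-\nu_1^2)\widehat{\widetilde u}$ is exactly $-i\gamma\bigl(\sin(\nu_1|\xi|t)/(\nu_1|\xi|)-\sin(\nu_2|\xi|t)/(\nu_2|\xi|)\bigr)\xi/|\xi|^2$, which coincides with $(\nu_2^2-\nu_1^2)\widehat{\ml G}_3$ after the identification $\nu_2^2-\nu_1^2=-\alpha_2$; similarly the $\widehat{\widetilde\theta}_1$-multiplier in the formula for $(\nu_2^2-\nu_1^2)\widehat{\widetilde\theta}$ matches $(\nu_2^2-\nu_1^2)\widehat{\ml G}_4$. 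Dividing by $\nu_2^2-\nu_1^2\neq 0$ therefore reduces both error estimates to bounding three residual contributions coming from $\widehat{\widetilde u}_0$, $\widehat{\widetilde u}_1$ and $\widehat{\widetilde\theta}_0$.

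For $\widehat{\widetilde u}-\widehat{\ml G}_3\widehat{\widetilde\theta}_1$ the three remaining multipliers are treated as follows. The $\widehat{\widetilde u}_0$-multiplier is a bounded linear combination of $\cos(\nu_j|\xi|t)$ and is therefore uniformly $O(1)$ in $(t,\xi)$, yielding the contribution $|\widehat{\widetilde u}_0|$. The $\widehat{\widetilde u}_1$-multiplier is a linear combination of the kernels $\sin(\nu_j|\xi|t)/(\nu_j|\xi|)$; each individual term is already of the form $|\sin(\nu_j|\xi|t)|/(\nu_j|\xi|)$, so the triangle inequality together with a suitable choice $\tilde c=\tilde c(\nu_1,\nu_2)$ produces the envelope $|\sin(\tilde c|\xi|t)|/|\xi|$ claimed in the statement. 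For the $\widehat{\widetilde\theta}_0$-multiplier, which reads $-i\gamma\xi\bigl(\cos(\nu_1|\xi|t)-\cos(\nu_2|\xi|t)\bigr)/|\xi|^2$, the key manipulation is the sum-to-product identity
\begin{align*}
\cos(\nu_1|\xi|t)-\cos(\nu_2|\xi|t)=-2\sin\!\Bigl(\tfrac{\nu_1+\nu_2}{2}|\xi|t\Bigr)\sin\!\Bigl(\tfrac{\nu_1-\nu_2}{2}|\xi|t\Bigr),
\end{align*}
which extracts a single sine factor absorbing one power of $|\xi|$ while the companion sine is bounded by $1$; this gives the desired bound by $|\sin(\tilde c|\xi|t)|/|\xi|$ with $\tilde c=(\nu_1+\nu_2)/2$.

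For $\widehat{\widetilde\theta}-\widehat{\ml G}_4\widehat{\widetilde\theta}_1$ the situation is cleaner because no $|\xi|^{-1}$ singularity survives. The $\widehat{\widetilde u}_0$- and $\widehat{\widetilde u}_1$-multipliers carry an outer factor $\xi$ inherited from the $\divv\widetilde u_{0,1}$-pieces of the fourth-order reformulation \eqref{Hyperbolic-Eq}; in the $\widehat{\widetilde u}_1$-term this $|\xi|$ cancels the $|\xi|^{-1}$ sitting inside each $\sin(\nu_j|\xi|t)/(\nu_j|\xi|)$ and yields the bound $|\widehat{\widetilde u}_1|$, while in the $\widehat{\widetilde u}_0$-term it remains as the factor $|\xi|$ multiplying a bounded cosine combination, giving $|\xi|\,|\widehat{\widetilde u}_0|$. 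The $\widehat{\widetilde\theta}_0$-multiplier is a bounded cosine combination and contributes $|\widehat{\widetilde\theta}_0|$ directly.

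The main obstacle is really just careful bookkeeping: correctly matching the $\widehat{\widetilde\theta}_1$-coefficients in the solution formulas with the declared kernels $\widehat{\ml G}_3,\widehat{\ml G}_4$ modulo the common prefactor $(\nu_2^2-\nu_1^2)^{-1}$, and processing the cosine difference in the $\widehat{\widetilde\theta}_0$-contribution via the sum-to-product identity so that the strong singularity $|\xi|^{-1}$ in the $\widehat{\widetilde u}$-estimate is compensated by a single sine factor. No spectral analysis beyond the explicit roots and no separate low/high-frequency argument are needed, because all claimed bounds are pointwise on $\mb R^n$.
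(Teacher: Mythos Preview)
Your proof is correct and follows essentially the same route as the paper's: both simply read off the residual multipliers from the explicit Fourier-space representation and bound them termwise. The only cosmetic difference is in handling the $\widehat{\widetilde\theta}_0$-contribution to $\widehat{\widetilde u}$: you use the sum-to-product identity $\cos(\nu_1|\xi|t)-\cos(\nu_2|\xi|t)=-2\sin\bigl(\tfrac{\nu_1+\nu_2}{2}|\xi|t\bigr)\sin\bigl(\tfrac{\nu_1-\nu_2}{2}|\xi|t\bigr)$, whereas the paper writes $1-\cos(\nu_j|\xi|t)\leqslant 2|\sin(\nu_j|\xi|t/2)|$ and applies it to each cosine separately; both extract a sine factor that cancels the $|\xi|^{-1}$.
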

\begin{proof}
By using $1-\cos(\nu_j|\xi|t)|\leqslant 2|\sin(\nu_j|\xi|t/2)|$ we are able to get
\begin{align*}
	|\widehat{\widetilde{u}}-\widehat{\ml{G}}_3\widehat{\widetilde{\theta}}_1|\lesssim|\widehat{\widetilde{u}}_0|+\frac{|\sin(\tilde{c}|\xi|t)|}{|\xi|}\,|\widehat{\widetilde{u}}_1|+\frac{1}{|\xi|}\big(|\sin(\nu_1|\xi|t/2)|+|\sin(\nu_2|\xi|t/2)|\big)|\widehat{\widetilde{\theta}}_0|,
\end{align*}
which leads to our desired estimate with $\tilde{c}>0$ depending on $\nu_1,\nu_2$.
The error estimate for $\widehat{\widetilde{\theta}}$ is a direct consequence of subtraction and the boundedness of sine, cosine functions.
\end{proof}

\subsection{Sharp upper bound estimates in the $L^2$ norm}
\hspace{5mm}Before stating the sharp upper bound estimates for $\widetilde{u}$ and $\widetilde{\theta}$, we first deduce the next proposition for the crucial (dominant) kernel $\widehat{\ml{G}}_3=\widehat{\ml{G}}_3(t,\xi)$ acting on the initial data.
\begin{prop}\label{Prop-G3}
Let us assume $\widetilde{\theta}_1\in L^2\cap L^1$ for $1\leqslant n\leqslant 4$. Then, the following sharp upper bound estimates:
\begin{align*}
\|\ml{G}_3(t,D)\widetilde{\theta}_1(\cdot)\|_{(L^2)^n}\lesssim\ml{D}_n(1+t)\|\widetilde{\theta}_1\|_{L^2\cap L^1}
\end{align*}
hold for any $t>0$.
\end{prop}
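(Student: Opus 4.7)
The plan is to split the $L^2$ norm into three frequency zones via the cut-offs $\chi_{\intt},\chi_{\bdd},\chi_{\extt}$ and to reduce the small-frequency piece to an explicit one-dimensional integral. Away from the origin the kernel $\widehat{\ml{G}}_3$ is uniformly bounded: using $|\sin(\nu_j|\xi|t)/(\nu_j|\xi|)|\leqslant(\nu_j|\xi|)^{-1}$ one has $|\widehat{\ml{G}}_3(t,\xi)|\leqslant C|\xi|^{-2}\leqslant C(\varepsilon_0/2)^{-2}$ on the support of $\chi_{\bdd}+\chi_{\extt}$, so Plancherel's theorem yields $\|(\chi_{\bdd}+\chi_{\extt})(D)\ml{G}_3(t,D)\widetilde{\theta}_1\|_{(L^2)^n}\lesssim\|\widetilde{\theta}_1\|_{L^2}$, which is independent of $t$ and is absorbed into $\ml{D}_n(1+t)\|\widetilde{\theta}_1\|_{L^2\cap L^1}$.

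For the low-frequency piece I would pull out the Hausdorff--Young bound $|\widehat{\widetilde{\theta}}_1(\xi)|\leqslant\|\widetilde{\theta}_1\|_{L^1}$, so that it suffices to show $\|\chi_{\intt}(\xi)\widehat{\ml{G}}_3(t,\xi)\|_{(L^2)^n}\lesssim\ml{D}_n(1+t)$. Passing to polar coordinates and evaluating $\int_{\mb{S}^{n-1}}\omega_k^2/|\omega|^4\,\mathrm{d}\sigma_{\omega}$ component by component (as in the proof of Proposition~\ref{Prop-Kernel-Optimal-Type-III}) reduces the matter to a bound on
\[
\int_{0}^{\varepsilon_0}|F(r,t)|^{2}\,r^{n-3}\,\mathrm{d}r,\qquad F(r,t):=\frac{\sin(\nu_1 rt)}{\nu_1 r}-\frac{\sin(\nu_2 rt)}{\nu_2 r}.
\]
For bounded time this is harmless; for $t\gg 1$ the integral will be split at the threshold $r=1/t$.

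On $[0,1/t]$ the Taylor expansion $\sin(\nu_j rt)/(\nu_j r)=t-\nu_j^{2}r^{2}t^{3}/6+O(r^{4}t^{5})$ produces the crucial cancellation
\[
F(r,t)=\frac{\nu_{2}^{2}-\nu_{1}^{2}}{6}\,r^{2}t^{3}+O(r^{4}t^{5}),
\]
whence $\int_0^{1/t}|F|^{2}r^{n-3}\,\mathrm{d}r\lesssim t^{6}\int_{0}^{1/t}r^{n+1}\,\mathrm{d}r\lesssim t^{4-n}$. On $[1/t,\varepsilon_0]$ the crude bound $|F|\leqslant C/r$ yields an integrand $\lesssim r^{n-5}$; elementary primitive integration contributes $t^{3},\,t^{2},\,t,\,\ln t$ for $n=1,2,3,4$ respectively. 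Summing the two regimes reproduces $[\ml{D}_{n}(1+t)]^{2}$ in each dimension $n\in\{1,2,3,4\}$.

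The key obstacle is the strong singularity $|\xi|^{-2}$ of $\widehat{\ml{G}}_3$: for $n\in\{1,2\}$ the weight $r^{n-3}$ is non-integrable at the origin, so any valid bound must exploit the exact cancellation of the leading term $t$ between the two sine kernels, and the Taylor expansion above makes this transparent. In contrast to Lemma~\ref{Lemma-Very-Low-D} there is no diffusive damping $\mathrm{e}^{-c_j r^2 t}$ available, but since the present configuration $\ell_1=\ell_2=\gamma/\alpha_2$, $\beta_1=\nu_1\neq\nu_2=\beta_2$ falls within the finite regime of Remark~\ref{Rem-parameter-blowup}, straightforward bookkeeping across the two regimes $rt\lesssim 1$ and $rt\gtrsim 1$ replaces the rescaling $w=r\sqrt{t}$ used in the damped setting.
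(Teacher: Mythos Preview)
Your argument is correct and in fact somewhat cleaner than the paper's. Both proofs treat the region $\{rt\lesssim 1\}$ identically via the Taylor cancellation $F(r,t)=\frac{\nu_2^2-\nu_1^2}{6}\,r^2t^3+O(r^4t^5)$, producing the contribution $t^{4-n}$. The difference lies in the region $\{rt\gtrsim 1\}$: the paper uses only the cruder estimate $|F|\leqslant Ct$ (from $|\sin x/x|\leqslant 1$) and is therefore forced to introduce additional \emph{time-dependent} thresholds---at $\mu_6/\sqrt{t}$, $\mu_6/\sqrt{\ln t}$, $\mu_6/\sqrt[4]{t}$, $\mu_6/\sqrt[4]{\ln t}$ for $n=1,2,3,4$ respectively---so as to switch from the $L^1$ norm of $\widetilde{\theta}_1$ to its $L^2$ norm and keep the resulting integral finite. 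Your use of the sharper pointwise bound $|F|\leqslant C/r$ on $[1/t,\varepsilon_0]$ makes this extra machinery unnecessary: the $L^1$ norm suffices on the whole small-frequency zone, and the $L^2$ regularity is invoked only on the fixed region $\{|\xi|\gtrsim\varepsilon_0\}$.

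One small caveat: for $n=2$ your two pieces sum to $O(t^2)$, not $t^2\ln t$, so the phrase ``reproduces $[\ml{D}_n(1+t)]^2$'' is not literally accurate there---you have actually obtained the \emph{stronger} bound $\|\ml{G}_3(t,D)\widetilde{\theta}_1\|_{(L^2)^2}\lesssim t\,\|\widetilde{\theta}_1\|_{L^2\cap L^1}$, which of course still implies the stated estimate since $t\lesssim\ml{D}_2(1+t)=t\sqrt{\ln(\mathrm{e}+t)}$.
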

\begin{proof} With the same setting on $\mu_6$ in the proof of Lemma \ref{Lemma-Very-Low-D}, let us take a small parameter $\mu_6>0$ such that $\nu_j\mu_6\ll 1$ with $j\in\{2,3\}$. The targeted integral is separated by the threshold $|\xi|=\mu_6/t$ into two parts
\begin{align*}
\|\ml{G}_3(t,D)\widetilde{\theta}_1(\cdot)\|_{(L^2)^n}^2&\lesssim\left(\int_{|\xi|\leqslant\mu_6/t}+\int_{|\xi|\geqslant\mu_6/t}\right)\left|\frac{\sin(\nu_1|\xi|t)}{\nu_1|\xi|}-\frac{\sin(\nu_2|\xi|t)}{\nu_2|\xi|}\right|^2|\xi|^{-2}\,|\widehat{\widetilde{\theta}}_1(\xi)|^2\,\mathrm{d}\xi\\
&=:\ml{J}_{6,1}(t;n)+\ml{J}_{6,2}(t;n).
\end{align*}

Recalling $\nu_1>\nu_2$, for $\nu_j|\xi|t\leqslant\nu_j\mu_6\ll 1$ the Taylor expansion of sine function says
\begin{align}\label{Est-13}
\left|\frac{\sin(\nu_1|\xi|t)}{\nu_1|\xi|t}-\frac{\sin(\nu_2|\xi|t)}{\nu_2|\xi|t}\right|=\frac{\nu_1^2-\nu_2^2}{6}\,|\xi|^2t^2+o(|\xi|^3t^3)\approx |\xi|^2t^2,
\end{align}
which leads to the estimate for any $t>0$ as follows:
\begin{align*}
\ml{J}_{6,1}(t;n)\lesssim t^6\int_{|\xi|\leqslant \mu_6/t}|\xi|^2\,|\widehat{\widetilde{\theta}}_1(\xi)|^2\,\mathrm{d}\xi\lesssim t^6\int_0^{\mu_6/t}r^{n+1}\,\mathrm{d}r\,\|\widehat{\widetilde{\theta}}_1\|_{L^{\infty}}^2\lesssim t^{4-n}\|\widetilde{\theta}_1\|_{L^1}^2.
\end{align*}
Notice that $\ml{J}_{6,1}(t;n)$ is bounded for small time due to $1\leqslant n\leqslant 4$.

For another, strongly motivated by the free wave equation studied in \cite{Ikehata=2023}, we are going to propose some refined zones in the plane $\{|\xi|\geqslant \mu_6/t\}$ in different dimensions for large time $t\gg1$. The basic philosophy is to apply the $L^1$ integrability of initial data and the polar coordinates for ``small frequencies''; to employ the $L^2$ regularity of initial data for ``large frequencies''.
\begin{itemize}
	\item For $n=1$ one uses $|\sin(\nu_j|\xi|t)|\leqslant 1$ to derive
\begin{align*}
	\ml{J}_{6,2}(t;1)&\lesssim \sum\limits_{j=1,2}\left(\int_{\mu_6/t\leqslant|\xi|\leqslant \mu_6/\sqrt{t}}+\int_{|\xi|\geqslant \mu_6/\sqrt{t}}\right)|\sin(\nu_j|\xi|t)|^2\,|\xi|^{-4}\,|\widehat{\widetilde{\theta}}_1(\xi)|^2\,\mathrm{d}\xi\\
	&\lesssim\int_{\mu_6/t}^{\mu_6/\sqrt{t}}r^{-4}\,\mathrm{d}r\,\|\widetilde{\theta}_1\|_{L^1}^2+t^2\int_{|\xi|\geqslant \mu_6/\sqrt{t}}|\widehat{\widetilde{\theta}}_1(\xi)|^2\,\mathrm{d}\xi\\
	&\lesssim (t^3-t^{\frac{3}{2}})\|\widetilde{\theta}_1\|_{L^1}^2+t^2\|\widetilde{\theta}_1\|_{L^2}^2\lesssim t^3\|\widetilde{\theta}_1\|_{L^2\cap L^1}^2.
\end{align*}
\item For $n=2$ one uses $|\sin(\nu_j|\xi|t)|\leqslant \nu_j|\xi|t$ to derive
\begin{align*}
\ml{J}_{6,2}(t;2)&\lesssim t^2\sum\limits_{j=1,2}\left(\int_{\mu_6/t\leqslant|\xi|\leqslant \mu_6/\sqrt{\ln t}}+\int_{|\xi|\geqslant \mu_6/\sqrt{\ln t}}\right)\left|\frac{\sin(\nu_j|\xi|t)}{\nu_j|\xi|t}\right|^2|\xi|^{-2}\,|\widehat{\widetilde{\theta}}_1(\xi)|^2\,\mathrm{d}\xi\\
&\lesssim t^2\int_{\mu_6/t}^{\mu_6/\sqrt{\ln t}}r^{-1}\,\mathrm{d}r\,\|\widetilde{\theta}_1\|_{L^1}^2+t^2\int_{|\xi|\geqslant \mu_6/\sqrt{\ln t}}|\xi|^{-2}\,|\widehat{\widetilde{\theta}}_1(\xi)|^2\,\mathrm{d}\xi\\
&\lesssim t^2(\ln t-\ln\ln t)\|\widetilde{\theta}_1\|_{L^1}^2+t^2\ln t\,\|\widetilde{\theta}_1\|_{L^2}^2\lesssim t^2\ln t\,\|\widetilde{\theta}_1\|_{L^2\cap L^1}^2.
\end{align*}
\item For $n=3$ one uses $|\sin(\nu_j|\xi|t)|\leqslant 1$ to derive
\begin{align*}
	\ml{J}_{6,2}(t;3)&\lesssim \sum\limits_{j=1,2}\left(\int_{\mu_6/t\leqslant|\xi|\leqslant \mu_6/\sqrt[4]{t}}+\int_{|\xi|\geqslant \mu_6/\sqrt[4]{t}}\right)|\sin(\nu_j|\xi|t)|^2\,|\xi|^{-4}\,|\widehat{\widetilde{\theta}}_1(\xi)|^2\,\mathrm{d}\xi\\
	&\lesssim\int_{\mu_6/t}^{\mu_6/\sqrt[4]{t}}r^{-2}\,\mathrm{d}r\,\|\widetilde{\theta}_1\|_{L^1}^2+t\int_{|\xi|\geqslant \mu_6/\sqrt[4]{t}}|\widehat{\widetilde{\theta}}_1(\xi)|^2\,\mathrm{d}\xi\\
	&\lesssim (t-t^{\frac{1}{4}})\|\widetilde{\theta}_1\|_{L^1}^2+t\,\|\widetilde{\theta}_1\|_{L^2}^2\lesssim t\,\|\widetilde{\theta}_1\|_{L^2\cap L^1}^2.
\end{align*}
\item For $n=4$ one uses $|\sin(\nu_j|\xi|t)|\leqslant 1$ to derive
\begin{align*}
\ml{J}_{6,2}(t;4)&\lesssim \sum\limits_{j=1,2}\left(\int_{\mu_6/t\leqslant|\xi|\leqslant \mu_6/\sqrt[4]{\ln t}}+\int_{|\xi|\geqslant \mu_6/\sqrt[4]{\ln t}}\right)|\sin(\nu_j|\xi|t)|^2\,|\xi|^{-4}\,|\widehat{\widetilde{\theta}}_1(\xi)|^2\,\mathrm{d}\xi\\
&\lesssim \int_{\mu_6/t}^{\mu_6/\sqrt[4]{\ln t}}r^{-1}\,\mathrm{d}r\,\|\widetilde{\theta}_1\|_{L^1}^2+\int_{|\xi|\geqslant\mu_6/\sqrt[4]{\ln t}}|\xi|^{-4}\,|\widehat{\widetilde{\theta}}_1(\xi)|^2\,\mathrm{d}\xi\\
	&\lesssim (\ln t-\ln\ln t)\|\widetilde{\theta}_1\|_{L^1}^2+\ln t\,\|\widetilde{\theta}_1\|_{L^2}^2\lesssim \ln t\,\|\widetilde{\theta}_1\|_{L^2\cap L^1}^2.
\end{align*}
\end{itemize}
Concerning large time $t\gg1$, the derived estimates in the above claim
\begin{align*}
\ml{J}_{6,2}(t;n)\lesssim[\ml{D}_n(1+t)]^2\|\widetilde{\theta}_1\|_{L^2\cap L^1}^2
\end{align*}
for $1\leqslant n\leqslant 4$. Concerning small time, it is trivial from $|\sin(\nu_j|\xi|t)|\leqslant \nu_j|\xi|t$ that
\begin{align*}
\ml{J}_{6,2}(t;n)\lesssim t^2\int_{|\xi|\geqslant \mu_6/t}|\xi|^{-2}\,|\widehat{\widetilde{\theta}}_1(\xi)|^2\,\mathrm{d}\xi\lesssim \|\widetilde{\theta}_1\|_{L^2}^2.
\end{align*}
From the definition of $\ml{D}_n(1+t)$, our proof is complete.
\end{proof}

Because of the fact that
\begin{align*}
	|\widehat{\ml{G}}_4(t,|\xi|)\widehat{\widetilde{\theta}}_1(\xi)|\lesssim\frac{|\sin(\tilde{c}|\xi|t)|}{\tilde{c}|\xi|}\,|\widehat{\widetilde{\theta}}_1(\xi)|,
\end{align*}
by following the proof of last result or using \cite[Section 3 with $w_0(\xi)\equiv0$, i.e. Lemmas 3.1 and 3.2 with $u_0(x)\equiv0$]{Ikehata=2023}, we are able to conclude the next proposition immediately.
\begin{prop}\label{Prop-G4}
	Let us assume $\widetilde{\theta}_1\in L^2\cap L^1$ for $1\leqslant n\leqslant 2$. Then, the following sharp upper bound estimates:
	\begin{align*}
		\|\ml{G}_4(t,|D|)\widetilde{\theta}_1(\cdot)\|_{L^2}\lesssim\ml{E}_n(1+t)\|\widetilde{\theta}_1\|_{L^2\cap L^1}
	\end{align*}
hold for any $t>0$.
\end{prop}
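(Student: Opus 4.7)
The plan is to follow the same philosophy as Proposition \ref{Prop-G3} but to exploit the weaker singularity $|\xi|^{-1}$ in $\widehat{\ml{G}}_4$ (compared with $|\xi|^{-2}$ in $\widehat{\ml{G}}_3$). I start from the pointwise bound $|\widehat{\ml{G}}_4(t,|\xi|)|\lesssim |\sin(\tilde{c}|\xi|t)|/(\tilde{c}|\xi|)$ noted just before the proposition, which follows by individually estimating the two sine pieces in $\widehat{\ml{G}}_4$ using $|\sin(\nu_j|\xi|t)/(\nu_j|\xi|)|\leqslant\min\{t,1/(\nu_j|\xi|)\}$ and comparing with a common frequency $\tilde{c}$. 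In particular, no Taylor-expansion cancellation between the two sine terms is needed, so the delicate matching that was essential in Lemma \ref{Lemma-Very-Low-D} can be bypassed.

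First I would partition the Fourier domain at the threshold $|\xi|=\mu_6/t$, with $\mu_6>0$ small so that $\tilde{c}\mu_6\ll 1$. On the low-frequency piece $\{|\xi|\leqslant\mu_6/t\}$, I would use $|\widehat{\ml{G}}_4|\lesssim t$ (from $|\sin x/x|\leqslant 1$) together with the Hausdorff--Young inequality $\|\widehat{\widetilde{\theta}}_1\|_{L^{\infty}}\leqslant\|\widetilde{\theta}_1\|_{L^1}$ and polar coordinates, yielding a contribution of order $t^2\cdot(\mu_6/t)^{n}\|\widetilde{\theta}_1\|_{L^1}^2\sim t^{2-n}\|\widetilde{\theta}_1\|_{L^1}^2$, which is of order $t$ for $n=1$ and bounded for $n=2$, in both cases controlled by $[\ml{E}_n(1+t)]^2$.

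For the high-frequency piece $\{|\xi|\geqslant\mu_6/t\}$ I would use $|\widehat{\ml{G}}_4|\lesssim 1/|\xi|$ and introduce a second, dimension-dependent threshold separating $L^1$ control from $L^2$ control: split at $|\xi|=\mu_6/\sqrt{t}$ when $n=1$ and at $|\xi|=\mu_6/\sqrt{\ln t}$ when $n=2$. The inner annulus is handled by $|\widehat{\widetilde{\theta}}_1|\leqslant\|\widetilde{\theta}_1\|_{L^1}$ together with the explicit polar integral of $r^{n-3}$, giving $t\|\widetilde{\theta}_1\|_{L^1}^2$ (resp.\ $\ln t\|\widetilde{\theta}_1\|_{L^1}^2$); the outer region uses the pointwise bound $|\xi|^{-2}\lesssim t$ (resp.\ $|\xi|^{-2}\lesssim\ln t$) and the $L^2$ norm of $\widehat{\widetilde{\theta}}_1$. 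Summing the two contributions produces exactly the growth rates $t$ and $\ln t$, i.e.\ $[\ml{E}_n(1+t)]^2\|\widetilde{\theta}_1\|_{L^2\cap L^1}^2$.

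The main technical point is balancing the intermediate threshold so that the $L^1$-type contribution from moderate frequencies and the $L^2$-type contribution from very high frequencies give the same rate; any cruder splitting (e.g.\ the naive bound $|\widehat{\ml{G}}_4|\leqslant t$ throughout) would lose the sharp logarithm in $n=2$. Because the singularity here is only $|\xi|^{-1}$, the whole argument is genuinely simpler than that of Proposition \ref{Prop-G3}, and it is essentially the same scheme used in \cite{Ikehata=2023} for the free wave equation with $w_0\equiv 0$; for small time $t\lesssim 1$ the trivial bound $|\widehat{\ml{G}}_4|\lesssim t$ combined with the $L^2$ norm of $\widetilde{\theta}_1$ handles the estimate, completing the proof.
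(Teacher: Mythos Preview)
Your proposal is correct and takes essentially the same approach as the paper: the paper's own proof simply records the pointwise bound $|\widehat{\ml{G}}_4|\lesssim|\sin(\tilde{c}|\xi|t)|/(\tilde{c}|\xi|)$ and then defers to the proof of Proposition~\ref{Prop-G3} (or equivalently to \cite{Ikehata=2023}), which is exactly the two-threshold splitting you describe. Your write-up is a faithful unpacking of that reference, with the same thresholds $\mu_6/\sqrt{t}$ for $n=1$ and $\mu_6/\sqrt{\ln t}$ for $n=2$ that the analogous computation in Proposition~\ref{Prop-G3} uses.
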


\begin{remark}\label{Rem-High-Dimensions}
Let us assume $\widetilde{\theta}_1\in L^2\cap L^1$ as before. For high dimensions $n\geqslant 5$, due to $\int_0^1r^{n-5}\,\mathrm{d}r\lesssim 1$ we have the bounded estimates
\begin{align*}
\|\ml{G}_3(t,D)\widetilde{\theta}_1(\cdot)\|_{(L^2)^n}^2&\lesssim\int_{\mb{R}^n}|\sin(\tilde{c}|\xi|t)|^2\,|\xi|^{-4}\,|\widehat{\widetilde{\theta}}_1(\xi)|^2\,\mathrm{d}\xi\\
&\lesssim \int_{|\xi|\leqslant 1}|\xi|^{-4}\,\mathrm{d}\xi\,\|\widetilde{\theta}_1\|_{L^1}^2+\int_{|\xi|\geqslant 1}|\widehat{\widetilde{\theta}}_1(\xi)|^2\,\mathrm{d}\xi\lesssim \|\widetilde{\theta}_1\|_{L^2\cap L^1}^2.
\end{align*}
Similarly, it holds
\begin{align*}
\|\ml{G}_4(t,|D|)\widetilde{\theta}_1(\cdot)\|_{L^2}^2\lesssim \|\widetilde{\theta}_1\|_{L^2\cap L^1}^2
\end{align*}
for $n\geqslant 3$. It means only the bounded estimates of crucial kernels in high dimensions.
\end{remark}

Analogously, as a corollary of last results with other initial data, it is clear that
\begin{align*}
\left\|\frac{\sin(\tilde{c}|\xi|t)}{\tilde{c}|\xi|}\,\big(|\widehat{\widetilde{u}}_1(\xi)|+|\widehat{\widetilde{\theta}}_0(\xi)|\big)\right\|_{L^2}\lesssim\ml{E}_n(1+t)\big(\|\widetilde{u}_1\|_{(L^2\cap L^1)^n}+\|\widetilde{\theta}_0\|_{L^2\cap L^1}\big)
\end{align*}
for $n\in\{1,2\}$. For another,  when $n\geqslant 3$ by Remark \ref{Rem-High-Dimensions}, we obtain the bounded estimates
\begin{align*}
	\left\|\frac{\sin(\tilde{c}|\xi|t)}{\tilde{c}|\xi|}\,\big(|\widehat{\widetilde{u}}_1(\xi)|+|\widehat{\widetilde{\theta}}_0(\xi)|\big)\right\|_{L^2}\lesssim\|\widetilde{u}_1\|_{(L^2\cap L^1)^n}+\|\widetilde{\theta}_0\|_{L^2\cap L^1}.
\end{align*}
All in all, from Proposition \ref{Prop-type-II-Error}, one may conclude the following two estimates:
\begin{align}\label{Est-12}
&\|\widetilde{u}(t,\cdot)-\ml{G}_3(t,D)\widetilde{\theta}_1(\cdot)\|_{(L^2)^n}\notag\\
&\qquad\lesssim\big(\|(\widetilde{u}_0,\widetilde{u}_1)\|_{(L^2)^n\times (L^2\cap L^1)^n}+\|\widetilde{\theta}_0\|_{L^2\cap L^1}\big)\times \begin{cases}
		\ml{E}_n(1+t)&\mbox{if}\ \ n\leqslant 2,\\
	1&\mbox{if}\ \ n\geqslant 3,
\end{cases}
\end{align}
moreover, for any $n\geqslant 1$
\begin{align}\label{Est-10}
\|\widetilde{\theta}(t,\cdot)-\ml{G}_4(t,|D|)\widetilde{\theta}_1(\cdot)\|_{L^2}\lesssim\|(\widetilde{u}_0,\widetilde{u}_1)\|_{(H^1)^n\times (L^2)^n}+\|\widetilde{\theta}_0\|_{L^2}.
\end{align}
Combining to the last two estimates and Propositions \ref{Prop-G3}-\ref{Prop-G4} associated with the usual triangle inequality, the upper bound estimates in Theorems \ref{Thm-III-U} and \ref{Thm-III-theta} are demonstrated.

\subsection{Sharp lower bound estimates in the $L^2$ norm}

\subsubsection{Lower bounds for the elastic displacement in low dimensions}
\hspace{5mm}As our preparation, we recall the well-established estimate from \cite[Lemma 3.1]{Ikehata=2004} that
\begin{align}\label{Est-08}
	|\widehat{f}-P_f|\lesssim |\xi|\,\|f\|_{L^{1,1}}.
\end{align}
We next will estimate the lower bound for $\widetilde{u}$  in different dimensions $n\in\{1,2,3,4\}$.\\

\noindent\underline{The 1d and 3d Cases.} By shrinking the domain into $\Omega_t^0:=\{|\xi|\leqslant \mu_6/t\}$ and employing the triangle inequality, the lower bound is shown by
\begin{align*}
	\|\widetilde{u}(t,\cdot)\|_{(L^2)^n}&\geqslant\|\widehat{\widetilde{u}}(t,\xi)\|_{(L^2(\Omega_t^0 ))^n}\geqslant \|\widehat{\ml{G}}_3(t,\xi)\|_{(L^2(\Omega_t^0 ))^n}|P_{\widetilde{\theta}_1}|-\left\|\widehat{\widetilde{u}}(t,\xi)-\widehat{\ml{G}}_3(t,\xi)P_{\widetilde{\theta}_1}\right\|_{(L^2(\Omega_t^0 ))^n}
\end{align*}
carrying a small parameter $\mu_6>0$ such that $\nu_j\mu_6\ll 1$ with $j\in\{2,3\}$. According to \eqref{Est-08} associated with $|\sin(\tilde{c}rt)|\leqslant \tilde{c}rt$, one estimates
\begin{align*}
	\big\|\widehat{\ml{G}}_3(t,\xi)\big(\widehat{\widetilde{\theta}}_1(\xi)-P_{\widetilde{\theta}_1}\big)\big\|_{(L^2(\Omega_t^0 ))^n}^2
	&\lesssim\|\widehat{\ml{G}}_3(t,\xi)|\xi|\,\|_{(L^2(\Omega_t^0 ))^n}^2\|\widetilde{\theta}_1\|_{L^{1,1}}^2\lesssim\|\sin(\tilde{c}|\xi|t)|\xi|^{-1}\|_{(L^2(\Omega_t^0 ))^n}^2\|\widetilde{\theta}_1\|_{L^{1,1}}^2\\
	&\lesssim\int_0^{\mu_6/t}|\sin(\tilde{c}rt)|^2\,r^{n-3}\,\mathrm{d}r\,\|\widetilde{\theta}_1\|_{L^{1,1}}^2\lesssim t^2\int_0^{\mu_6/t} r^{n-1}\,\mathrm{d}r\,\|\widetilde{\theta}_1\|_{L^{1,1}}^2\\
	&\lesssim t^{2-n}\|\widetilde{\theta}_1\|_{L^{1,1}}^2.
\end{align*}
Combining the last inequality as well as \eqref{Est-12}, the next error estimates are valid:
\begin{align}\label{Est-16}
	&\left\|\widehat{\widetilde{u}}(t,\xi)-\widehat{\ml{G}}_3(t,\xi)P_{\widetilde{\theta}_1}\right\|_{(L^2(\Omega_t^0 ))^n}\notag\\
	&\qquad\leqslant \left\|\widehat{\widetilde{u}}(t,\xi)-\widehat{\ml{G}}_3(t,\xi)\widehat{\widetilde{\theta}}_1(\xi)\right\|_{(L^2)^n}+\big\|\widehat{\ml{G}}_3(t,\xi)\big(\widehat{\widetilde{\theta}}_1(\xi)-P_{\widetilde{\theta}_1}\big)\big\|_{(L^2(\Omega_t^0 ))^n}\notag\\
	&\qquad\lesssim\big(\|(\widetilde{u}_0,\widetilde{u}_1)\|_{(L^2)^n\times (L^2\cap L^1)^n}+\|(\widetilde{\theta}_0,\widetilde{\theta}_1)\|_{(L^2\cap L^1)\times  L^{1,1}}\big)\times \begin{cases}
		\ml{E}_1(1+t)&\mbox{if}\ \ n=1,\\
		1&\mbox{if}\ \ n=3,
	\end{cases}
\end{align}
as $t\gg1$. For another, thanks to the Taylor expansion \eqref{Est-13} for $\nu_j|\xi|t\leqslant\nu_j\mu_6\ll 1$, it yields
\begin{align*}
	\|\widehat{\ml{G}}_3(t,\xi)\|_{(L^2(\Omega_t^0 ))^n}^2&\gtrsim t^2\int_{\Omega_t^0}\left|\frac{\sin(\nu_1|\xi|t)}{\nu_1|\xi|t}-\frac{\sin(\nu_2|\xi|t)}{\nu_2|\xi|t}\right|^2\frac{1}{|\xi|^2}\,\mathrm{d}\xi\gtrsim t^6\int_{\Omega_t^0}|\xi|^{2}\,\mathrm{d}\xi\gtrsim t^{4-n},
\end{align*}
which is sharp for $n\in\{1,3\}$. Consequently, we arrive at the optimal lower bound estimates $\|\widetilde{u}(t,\cdot)\|_{(L^2)^n}\gtrsim \ml{D}_n(1+t)|P_{\widetilde{\theta}_1}|$ if $n\in\{1,3\}$ for large time $t\gg1$ provided that $P_{\widetilde{\theta}_1}\neq0$.\\

\noindent\underline{The 2d Case.} Again, we first know the lower bound
\begin{align*}
	\|\widetilde{u}(t,\cdot)\|_{(L^2)^2}&\geqslant\|\widehat{\widetilde{u}}(t,\xi)\,\mathrm{e}^{-|\xi|^2}\|_{(L^2(|\xi|\geqslant \rho_1/t ))^2}\\
	&\geqslant \|\widehat{\ml{G}}_3(t,\xi)\,\mathrm{e}^{-|\xi|^2}\|_{(L^2(|\xi|\geqslant \rho_1/t ))^2}|P_{\widetilde{\theta}_1}|-\left\|\big(\widehat{\widetilde{u}}(t,\xi)-\widehat{\ml{G}}_3(t,\xi)P_{\widetilde{\theta}_1}\big)\,\mathrm{e}^{-|\xi|^2}\right\|_{(L^2(|\xi|\geqslant \rho_1/t ))^2},
\end{align*}
where we used $1\geqslant \mathrm{e}^{-|\xi|^2}$ and shrank the domain into $\{|\xi|\geqslant \rho_1/t\}$ with $\rho_1>0$ defined in \eqref{sequences-2}. Similarly to the last proof, one deduces
\begin{align}
	&\left\|\big(\widehat{\widetilde{u}}(t,\xi)-\widehat{\ml{G}}_3(t,\xi)P_{\widetilde{\theta}_1}\big)\,\mathrm{e}^{-|\xi|^2}\right\|_{(L^2(|\xi|\geqslant \rho_1/t ))^2}\notag\\
	&\qquad\lesssim \ml{E}_2(1+t)\big(\|(\widetilde{u}_0,\widetilde{u}_1)\|_{(L^2)^2\times (L^2\cap L^1)^2}+\|(\widetilde{\theta}_0,\widetilde{\theta}_1)\|_{(L^2\cap L^1)\times  L^{1,1}}\big)\label{Est-17}
\end{align}
as $t\gg1$, due to \eqref{Est-12} and the calculation that
\begin{align*}
	\left\|\widehat{\ml{G}}_3(t,\xi)\big(\widehat{\widetilde{\theta}}_1(\xi)-P_{\widetilde{\theta}_1}\big)\,\mathrm{e}^{-|\xi|^2}\right\|_{(L^2(|\xi|\geqslant\rho_1/t))^2}^2&\lesssim\left\|\sin(\tilde{c}|\xi|t)\,\mathrm{e}^{-|\xi|^2}\,|\xi|^{-1}\right\|_{(L^2(|\xi|\geqslant \rho_1/t))^2}^2\|\widetilde{\theta}_1 \|_{L^{1,1}}^2\\
	&\lesssim\int_{\rho_1/t}^{+\infty}\mathrm{e}^{-2r^2}\,r^{-1}\,\mathrm{d}r\,\|\widetilde{\theta}_1 \|_{L^{1,1}}^2\lesssim \ln t\,\|\widetilde{\theta}_1 \|_{L^{1,1}}^2
\end{align*}
from the estimates \eqref{Est-03} and \eqref{Est-08}. Before estimating the crucial kernel, let us state
\begin{align*}
	\frac{\sin(\nu_1|\xi|t)}{\nu_1|\xi|}-\frac{\sin(\nu_2|\xi|t)}{\nu_2|\xi|}=\frac{\nu_1-\nu_2}{\nu_1}\left(\cos(\nu_3|\xi|t)-\frac{\sin(\nu_2|\xi|t)}{\nu_2|\xi|t}\right)t
\end{align*}
by the mean value theorem with $\nu_3\in(\nu_2,\nu_1)$.
Therefore, by repeating the deduction of $\ml{J}_{5,1}(t)$ from \eqref{Est-14} where $\Omega_k$ is renewed via \eqref{sequences-2} with $\beta_3=\nu_3$, we may conclude
\begin{align*}
	\|\widehat{\ml{G}}_3(t,\xi)\,\mathrm{e}^{-|\xi|^2}\|_{(L^2(|\xi|\geqslant \rho_1/t ))^2}^2&\gtrsim t^2\int_{|\xi|\geqslant \rho_1/t}\Big(|\cos(\nu_3|\xi|t)|^2-\Big|\frac{\sin(\nu_2|\xi|t)}{\nu_2|\xi|t}\Big|^2\,\Big)\,\mathrm{e}^{-2|\xi|^2}|\xi|^{-2}\,\mathrm{d}\xi\\
	&\gtrsim t^2\int_{\rho_1/t}^{+\infty}|\cos(\nu_3rt)|^2\,\mathrm{e}^{-2r^2}\,r^{-1}\,\mathrm{d}r-\int_{\rho_1/t}^{+\infty}\mathrm{e}^{-2r^2}\,r^{-3}\,\mathrm{d}r\\
	&\gtrsim t^2\sum\limits_{k=1}^{+\infty}\int_{\Omega_k/t}\left(\frac{1}{\sqrt{2}}\right)^2\,\mathrm{e}^{-2r^2}\,r^{-1}\,\mathrm{d}r-\mathrm{e}^{-2\rho_1^2/t^2}\int_{\rho_1/t}^{+\infty}\,r^{-3}\,\mathrm{d}r\\
	&\gtrsim t^2\int_{\rho_1/t}^{1}\mathrm{e}^{-2r^2}\,r^{-1}\,\mathrm{d}r-\mathrm{e}^{-2\rho_1^2/t^2}\,t^2\gtrsim t^2\ln t=[\ml{D}_2(1+t)]^2
\end{align*}
for large time $t\gg1$. Summarizing the obtained estimates in the above, due to the condition $P_{\widetilde{\theta}_1}\neq0$ our proof of sharp lower bound estimate for $n=2$ is completed.\\

\noindent\underline{The 4d Case.} We carry out the analogous steps as the 2d case, namely, we now shrink the domain into $\Omega_t^1:=\{1/t\leqslant |\xi|\leqslant 1/\sqrt{t} \,\}$ for large time and use $1\geqslant \mathrm{e}^{-|\xi|^2}$. Precisely,
\begin{align*}
	\|\widetilde{u}(t,\cdot)\|_{(L^2)^4}&\geqslant\|\widehat{\widetilde{u}}(t,\xi)\,\mathrm{e}^{-|\xi|^2}\|_{(L^2(\Omega_t^1 ))^4}\\
	&\geqslant \|\widehat{\ml{G}}_3(t,\xi)\,\mathrm{e}^{-|\xi|^2}\|_{(L^2(\Omega_t^1 ))^4}|P_{\widetilde{\theta}_1}|-\left\|\big(\widehat{\widetilde{u}}(t,\xi)-\widehat{\ml{G}}_3(t,\xi)P_{\widetilde{\theta}_1}\big)\,\mathrm{e}^{-|\xi|^2}\right\|_{(L^2(\Omega_t^1 ))^4}
\end{align*}
and then
\begin{align}\label{Est-18}
	&\left\|\big(\widehat{\widetilde{u}}(t,\xi)-\widehat{\ml{G}}_3(t,\xi)P_{\widetilde{\theta}_1}\big)\,\mathrm{e}^{-|\xi|^2}\right\|_{(L^2(\Omega_t^1 ))^4}\lesssim \|(\widetilde{u}_0,\widetilde{u}_1)\|_{(L^2)^4\times (L^2\cap L^1)^4}+\|(\widetilde{\theta}_0,\widetilde{\theta}_1)\|_{(L^2\cap L^1)\times  L^{1,1}}
\end{align}
as $t\gg1$, due to
\begin{align*}
	\left\|\widehat{\ml{G}}_3(t,\xi)\big(\widehat{\widetilde{\theta}}_1(\xi)-P_{\widetilde{\theta}_1}\big)\,\mathrm{e}^{-|\xi|^2}\right\|_{(L^2(\Omega_t^1 ))^4}^2&\lesssim\left\|\sin(\tilde{c}|\xi|t)\,\mathrm{e}^{-|\xi|^2}\,|\xi|^{-1}\right\|_{(L^2(\Omega_t^1 ))^4}^2\|\widetilde{\theta}_1 \|_{L^{1,1}}^2\\
	&\lesssim\int_{1/t}^{1/\sqrt{t}}\mathrm{e}^{-2r^2}\,r\,\mathrm{d}r\,\|\widetilde{\theta}_1 \|_{L^{1,1}}^2\lesssim \|\widetilde{\theta}_1 \|_{L^{1,1}}^2.
\end{align*}
Next, with the aid of the triangle inequality \eqref{Triangle-Ineq}, one investigates
\begin{align*}
&\|\widehat{\ml{G}}_3(t,\xi)\,\mathrm{e}^{-|\xi|^2}\|_{(L^2(\Omega_t^1 ))^4}^2\geqslant C\int_{1/t}^{1/\sqrt{t}}\left|\frac{\sin(\nu_1rt)}{\nu_1r}-\frac{\sin(\nu_2rt)}{\nu_2r}\right|^2\,\mathrm{e}^{-2r^2}\,r\,\mathrm{d}r\ \ \mbox{with}\ \ C>0\\
&\qquad\geqslant \frac{C(1-\epsilon_0)}{\nu_1^2}\int_{1/t}^{1/\sqrt{t}}|\sin(\nu_1rt)|^2\,\mathrm{e}^{-2r^2}\,r^{-1}\,\mathrm{d}r-\frac{C(1-\epsilon_0)}{\epsilon_0\nu_2^2}\int_{1/t}^{1/\sqrt{t}}|\sin(\nu_2rt)|^2\,\mathrm{e}^{-2r^2}\,r^{-1}\,\mathrm{d}r\\
&\qquad\geqslant C(1-\epsilon_0)\left(\frac{1}{2\nu_1^2}-\frac{1}{\epsilon_0\nu_2^2}\right)\int_{1/t}^{1/\sqrt{t}}\mathrm{e}^{-2r^2}\,r^{-1}\,\mathrm{d}r-\frac{C(1-\epsilon_0)}{2\nu_1^2}\int_{1/t}^{1/\sqrt{t}}\cos(2\nu_1rt)\,\mathrm{e}^{-2r^2}\,r^{-1}\,\mathrm{d}r\\
&\qquad\gtrsim \mathrm{e}^{-2/t}\,\ln t-1\gtrsim \ln t=[\ml{D}_4(1+t)]^2
\end{align*}
for large time $t\gg1$ if $2\nu_1^2<\nu_2^2$, in which we chose 
\begin{align*}
\epsilon_0=\frac{\nu_2^2+2\nu_1^2}{2\nu_2^2}\in(0,1)\ \ \mbox{such that}\ \ \frac{1}{2\nu_1^2}-\frac{1}{\epsilon_0\nu_2^2}=\frac{\nu_2^2-2\nu_1^2}{2\nu_1^2(\nu_2^2+2\nu_1^2)}>0,
\end{align*}
as well as employed
\begin{align}\label{Est-15}
\int_{1/t}^{1/\sqrt{t}}\cos(2\nu_1rt)\,\mathrm{e}^{-2r^2}\,r^{-1}\,\mathrm{d}r\lesssim t^{-\frac{1}{2}}\,\mathrm{e}^{-2/t}+\mathrm{e}^{-2/t^2}+t^{-1}\int_{1/t}^{1/\sqrt{t}}\,\mathrm{d}r+t^{-1}\int_{1/t}^{1/\sqrt{t}}r^{-2}\,\mathrm{d}r\lesssim 1
\end{align}
thanks to \eqref{Est-11}. Symmetrically, this large time lower bound also holds if $2\nu_2^2<\nu_1^2$. In the view of \eqref{Condition-01}, under the restriction $\alpha_1<3\alpha_2$, our proof for the sharp lower bound estimate for $n=4$ is completed.

\subsubsection{Lower bounds for the temperature difference in low dimensions}
\hspace{5mm}Let us turn to the lower bound for $\widetilde{\theta}$ when $n\in\{1,2\}$. Our philosophy is the same as the last proofs, so we just sketch the main difference. For the sake of convenient, we denote in this part
\begin{align*}
\ell_1=\frac{\nu_2^2-\kappa-\gamma^2}{\nu_2^2-\nu_1^2} \ \ \mbox{and}\ \ \ell_2=\frac{\nu_1^2-\kappa-\gamma^2}{\nu_2^2-\nu_1^2}
\end{align*}
which are non-trivial and $\ell_1\neq\ell_2$ surely.\\

\noindent\underline{The 1d Case.} By shrinking the domain into $\Omega_t^0$ with a small parameter $\mu_6>0$ such that $\nu_j\mu_6\ll 1$ for $j\in\{2,3\}$, one notices that
\begin{align*}
\|\widetilde{\theta}(t,\cdot)\|_{L^2}^2&\geqslant \|\widehat{\ml{G}}_4(t,|\xi|)\|_{L^2(\Omega_t^0)}^2|P_{\widetilde{\theta}_1}|^2-\|\widehat{\widetilde{\theta}}(t,\xi)-\widehat{\ml{G}}_4(t,|\xi|)\widehat{\widetilde{\theta}}_1(\xi)\|_{L^2}^2-\left\|\widehat{\ml{G}}_4(t,|\xi|)\big(\widehat{\widetilde{\theta}}_1(\xi)-P_{\widetilde{\theta}_1}\big)\right\|_{L^2(\Omega_t^0)}^2\\
&\gtrsim t^2\int_{0}^{\mu_6/t}\left|\ell_1\frac{\sin(\nu_1rt)}{\nu_1rt}-\ell_2\frac{\sin(\nu_2rt)}{\nu_2rt}\right|^2\,\mathrm{d}r\,|P_{\widetilde{\theta}_1}|^2-\left(\|(\widetilde{u}_0,\widetilde{u}_1)\|_{H^1\times L^2}^2+\|(\widetilde{\theta}_0,\widetilde{\theta}_1)\|_{L^2\times L^{1,1}}^2\right)\\
&\gtrsim t\,|P_{\widetilde{\theta}_1}|^2=[\ml{E}_1(1+t)|P_{\widetilde{\theta}_1}|\,]^2
\end{align*}
for large time $t\gg1$, where we used \eqref{Est-10}, \eqref{Est-08} and the Taylor expansion for $\nu_jrt\leqslant \nu_j\mu_6\ll 1$ that
\begin{align*}
\int_{0}^{\mu_6/t}\left|\ell_1\frac{\sin(\nu_1rt)}{\nu_1rt}-\ell_2\frac{\sin(\nu_2rt)}{\nu_2rt}\right|^2\,\mathrm{d}r\gtrsim (\ell_1-\ell_2)^2\int_0^{\mu_6/t}\mathrm{d}r\gtrsim t^{-1}.
\end{align*}
In the above inequality, it was also used
\begin{align*}
\left\|\widehat{\ml{G}}_4(t,|\xi|)\big(\widehat{\widetilde{\theta}}_1(\xi)-P_{\widetilde{\theta}_1}\big)\right\|_{L^2(\Omega_t^0)}^2\lesssim \|\sin(\tilde{c}|\xi|t)\|_{L^2(\Omega_t^0)}^2\|\widetilde{\theta}_1\|_{L^{1,1}}^2\lesssim t^{-1}\|\widetilde{\theta}_1\|_{L^{1,1}}^2.
\end{align*}
Thus, the sharp lower bound estimate for $n=1$ is finished.\\

\noindent\underline{The 2d Case.} We are able to shrink the domain into $\Omega_t^1$ for large time and use $1\geqslant \mathrm{e}^{-|\xi|^2}$ to arrive at
\begin{align*}
	\|\widetilde{\theta}(t,\cdot)\|_{L^2}^2&\geqslant \|\widehat{\ml{G}}_4(t,|\xi|)\,\mathrm{e}^{-|\xi|^2}\|_{L^2(\Omega_t^1)}^2|P_{\widetilde{\theta}_1}|^2-\|\widehat{\widetilde{\theta}}(t,\xi)-\widehat{\ml{G}}_4(t,|\xi|)\widehat{\widetilde{\theta}}_1(\xi)\|_{L^2}^2\\
	& \quad-\left\|\widehat{\ml{G}}_4(t,|\xi|)\big(\widehat{\widetilde{\theta}}_1(\xi)-P_{\widetilde{\theta}_1}\big)\,\mathrm{e}^{-|\xi|^2}\right\|_{L^2(\Omega_t^1)}^2\\
	&\gtrsim \int_{1/t}^{1/\sqrt{t}}\left|\ell_1\frac{\sin(\nu_1rt)}{\nu_1r}-\ell_2\frac{\sin(\nu_2rt)}{\nu_2r}\right|^2\,\mathrm{e}^{-2r^2}\,r\,\mathrm{d}r\,|P_{\widetilde{\theta}_1}|^2\\
	&\quad-\left(\|(\widetilde{u}_0,\widetilde{u}_1)\|_{(H^1)^2\times (L^2)^2}^2+\|(\widetilde{\theta}_0,\widetilde{\theta}_1)\|_{L^2\times L^{1,1}}^2\right)\\
	&\gtrsim \ln t\,|P_{\widetilde{\theta}_1}|^2=[\ml{E}_2(1+t)|P_{\widetilde{\theta}_1}|\,]^2
\end{align*}
for large time $t\gg1$, where we used \eqref{Est-10} and \eqref{Est-08}, moreover, the next lower bound (we apply \eqref{Est-15} directly):
\begin{align*}
&\int_{1/t}^{1/\sqrt{t}}\left|\ell_1\frac{\sin(\nu_1rt)}{\nu_1r}-\ell_2\frac{\sin(\nu_2rt)}{\nu_2r}\right|^2\,\mathrm{e}^{-2r^2}\,r\,\mathrm{d}r\\
&\qquad\geqslant  \frac{(1-\epsilon_0)\ell_1^2}{\nu_1^2}\int_{1/t}^{1/\sqrt{t}}|\sin(\nu_1rt)|^2\,\mathrm{e}^{-2r^2}\,r^{-1}\,\mathrm{d}r-\frac{(1-\epsilon_0)\ell_2^2}{\epsilon_0\nu_2^2}\int_{1/t}^{1/\sqrt{t}}|\sin(\nu_2rt)|^2\,\mathrm{e}^{-2r^2}\,r^{-1}\,\mathrm{d}r\\
&\qquad\geqslant (1-\epsilon_0)\left(\frac{\ell_1^2}{2\nu_1^2}-\frac{\ell_2^2}{\epsilon_0\nu_2^2}\right)\int_{1/t}^{1/\sqrt{t}}\mathrm{e}^{-2r^2}\,r^{-1}\,\mathrm{d}r-\frac{(1-\epsilon_0)\ell_1^2}{2\nu_1^2}\int_{1/t}^{1/\sqrt{t}}\cos(2\nu_1rt)\,\mathrm{e}^{-2r^2}\,r^{-1}\,\mathrm{d}r\\
&\qquad\gtrsim \mathrm{e}^{-2/t}\,\ln t-1\gtrsim \ln t,
\end{align*}
 holds if $\ell_1^2\nu_2^2>2\ell_2^2\nu_1^2$, via choosing
 \begin{align*}
 \epsilon_0=\frac{2\ell_2^2\nu_1^2+\ell_1^2\nu_2^2}{2\ell_1^2\nu_2^2}\in(0,1)\ \ \mbox{such that}\ \ \frac{\ell_1^2}{2\nu_1^2}-\frac{\ell_2^2}{\epsilon_0\nu_2^2}=\frac{\ell_1^2(\ell_1^2\nu_2^2-2\ell_2^2\nu_1^2)}{2\nu_1^2(2\ell_2^2\nu_1^2+\ell_1^2\nu_2^2)}>0.
 \end{align*}
Symmetrically, this large time lower bound also holds if $\ell_2^2\nu_1^2>2\ell_1^2\nu_2^2$. Under the present restriction
\begin{align*}
\frac{(\nu_2^2-\kappa-\gamma^2)^2\nu_2^2}{(\nu_1^2-\kappa-\gamma^2)^2\nu_1^2}=\frac{\ell_1^2\nu_2^2}{\ell_2^2\nu_1^2}\in(0,1/2)\cup(2,+\infty),
\end{align*}
 our proof for the sharp lower bound estimate for $n=2$ is finished.
 
\section{Final remarks on different waves effect}
\hspace{5mm}Different from the energy decay statements in \cite{Zhang-Zuazua=2003,Quintanilla-Racke=2003,Reissig-Wang=2005,Yang-Wang=2006}, we in the present paper show that the thermal waves of type II and type III models grow optimally as those of the transversal waves (cf. \cite[Theorems 1.1 and 1.2]{Ikehata=2023} for the free wave equation) in low dimensions $n\in\{1,2\}$, but the longitudinal waves grow faster than them; the longitudinal waves decay slower than the corresponding thermal waves in high dimensions $n\geqslant 3$. See the next table in details. 
\begin{table}[h!]
	\begin{center}
		\caption{Optimal large time estimates in the $L^2$ framework}
		\medskip
		\label{Table-01}
		\begin{tabular}{ccccccc} 
			\toprule
		\multirow{2}{*}{Different Waves in Thermoelasticity} & \multirow{2}{*}{Reference}	& \multirow{2}{*}{$n=1$} & \multirow{2}{*}{$n=2$} & \multirow{2}{*}{$n=3$} & \multirow{2}{*}{$n=4$} & \multirow{2}{*}{$n\geqslant 5$}\\
		& & & &&  &\\
			\midrule
			\multirow{2}{*}{Transversal Waves $\ml{U}^{s_0}$} & \multirow{2}{*}{\cite{Ikehata=2023}} & \multirow{2}{*}{$\sqrt{t}$} & \multirow{2}{*}{$\sqrt{\ln t}$} & \multirow{2}{*}{$\lesssim 1$} & \multirow{2}{*}{$\lesssim 1$} & \multirow{2}{*}{$\lesssim 1$}\\
			&&&&&&\\
			\midrule
			\multirow{2}{*}{Longitudinal Waves $\widetilde{\ml{U}}^{p_0}$ of Type II Model }& \multirow{2}{*}{Theorem \ref{Thm-II-U}}&\multirow{2}{*}{$t\sqrt{t}$} & \multirow{2}{*}{$t\sqrt{\ln t}$} & \multirow{2}{*}{$\sqrt{t}$} & \multirow{2}{*}{$\sqrt{\ln t}$} & \multirow{2}{*}{$\lesssim 1$}\\
			& & & && &\\
			\midrule
			\multirow{2}{*}{Longitudinal Waves $\ml{U}^{p_0}$ of Type III Model}& \multirow{2}{*}{Theorem \ref{Thm-III-U}}&\multirow{2}{*}{$t\sqrt{t}$} & \multirow{2}{*}{$t\sqrt{\ln t}$} & \multirow{2}{*}{$\sqrt{t}$} & \multirow{2}{*}{$\sqrt{\ln t}$} & \multirow{2}{*}{$t^{1-\frac{n}{4}}$}\\
			& & & && &\\
			\midrule
			\multirow{2}{*}{Thermal Waves $\widetilde{\theta}$ of Type II Model}& \multirow{2}{*}{Theorem \ref{Thm-II-theta}}&\multirow{2}{*}{$\sqrt{t}$} & \multirow{2}{*}{$\sqrt{\ln t}$} & \multirow{2}{*}{$\lesssim 1$} & \multirow{2}{*}{$\lesssim 1$} & \multirow{2}{*}{$\lesssim 1$}\\
			& & && & &\\
			\midrule
						\multirow{2}{*}{Thermal Waves $\theta$ of Type III Model}& \multirow{2}{*}{Theorem \ref{Thm-III-theta}} &\multirow{2}{*}{$\sqrt{t}$} & \multirow{2}{*}{$\sqrt{\ln t}$} & \multirow{2}{*}{$t^{-\frac{1}{4}}$} & \multirow{2}{*}{$t^{-\frac{1}{2}}$} & \multirow{2}{*}{$t^{\frac{1}{2}-\frac{n}{4}}$}\\
			& & && & &\\
			\bottomrule
			\multicolumn{7}{l}{\emph{$*$The notation $\lesssim 1$ denotes bounded estimates from its upper side.}}\\
		\end{tabular}
	\end{center}
\end{table}

That is to say the longitudinal waves play a decisive role for large time behavior in comparison with the transversal waves in the type II and type III models for low dimensions, which is completely different from the classical thermoelastic system (\cite[Remark 2.3]{Chen-Takeda=2023}). This hyperbolic thermal law produces stronger singularities than the classical parabolic Fourier law in thermoelastic systems. These strong singularities lead to the further growth properties.

Furthermore, according to the Helmholtz decomposition $L^2=\overline{\nabla H^1}\oplus D_0$ with the function spaces
\begin{align*}
	\nabla H^1:=\big\{\nabla f:\   f\in H^1\big\},\ \ D_0:=\big\{f\in L^2:\  (\nabla\phi,f)_{L^2}=0\ \ \mbox{with}\ \ \forall \phi\in \ml{C}_0^{\infty}\big\},
\end{align*} 
in physical dimensions $n\in\{1,2,3\}$ the solution is expressed directly by $\ml{U}=\ml{U}^{p_0}+\ml{U}^{s_0}$. In other words, by the last derived estimates in Table \ref{Table-01}, we claim
\begin{align*}
\|\ml{U}(t,\cdot)\|_{(L^2)^n}\approx\ml{D}_n(1+t)\ \ \mbox{as well as} \ \ \|\theta(t,\cdot)\|_{L^2}\approx\ml{E}_n(1+t)
\end{align*}
for large time $t\gg1$, provided that $P_{\theta_1}\neq0$ for the original thermoelastic coupled systems \eqref{Eq-Original-01} of type II and type III.

\appendix
\section{Fourth order differential equations in the Fourier space}
\setcounter{equation}{0}
\renewcommand{\theequation}{A.\arabic{equation}}
\hspace{5mm}For the sake of convenience, we in this appendix state the explicit representations of solutions to some fourth order differential equations with the frequency $\xi\in\mb{R}^n$. These solution's formulas are used effectively in studying $\widehat{v}=\widehat{u},\widehat{\theta}$ for the thermoelastic systems of type II and type III. 

Let us consider the Cauchy problem in the Fourier space, precisely,
\begin{align}\label{General-Cauchy-problem}
\begin{cases}
\ml{L}_{\mathrm{General}}(\partial_t^4,\partial_t^3,\partial_t^2,\partial_t,|\xi|)\widehat{v}=0,&\xi\in\mb{R}^n,\ t>0,\\
\partial^j_t\widehat{v}(0,\xi)=\widehat{v}_j(\xi)\ \ \mbox{for}\ \ j\in\{0,1,2,3\},&\xi\in\mb{R}^n,
\end{cases}
\end{align}
where the corresponding characteristic roots to the quartic $\ml{P}_{\mathrm{General}}(\lambda^4,\lambda^3,\lambda^2,\lambda,|\xi|)=0$ satisfy one of the following cases (due to our need in the models).
\begin{description}
	\item[Case (A-1):] There exist two pairs of distinct conjugate roots
	\begin{align*}
		\lambda_{1,2}=\lambda_{\mathrm{R}}^{(1)}\pm i\lambda_{\mathrm{I}}^{(1)}\ \ \mbox{and}\ \ \lambda_{3,4}=\lambda_{\mathrm{R}}^{(2)}\pm i\lambda_{\mathrm{I}}^{(2)}
	\end{align*}
	with $\lambda_{\mathrm{R}}^{(1)},\lambda_{\mathrm{R}}^{(2)},\lambda_{\mathrm{I}}^{(1)},\lambda_{\mathrm{I}}^{(2)}\in\mb{R}$.
	\item[Case (A-2):] There exist two distinct real roots and a pair of conjugate roots
	\begin{align*}
		\lambda_{1},\lambda_2\ \ \mbox{and}\ \  \lambda_{3,4}=\lambda_{\mathrm{R}}\pm i\lambda_{\mathrm{I}}
	\end{align*}
	with $\lambda_1,\lambda_2,\lambda_{\mathrm{R}},\lambda_{\mathrm{I}}\in\mb{R}$.
\end{description}

\subsection{Representation of solution in Case (A-1)}
\hspace{5mm}Benefiting from two pairs of distinct conjugate roots in Case (A-1), the solution to the Cauchy problem \eqref{General-Cauchy-problem} is given by
\begin{align*}
	\widehat{v}=\sum\limits_{k=1,2}\left[(d_k^{(1)}+d_k^{(2)})\cos(\lambda_{\mathrm{I}}^{(k)}t)+i(d_k^{(1)}-d_k^{(2)})\sin(\lambda_{\mathrm{I}}^{(k)}t)\right]\mathrm{e}^{\lambda_{\mathrm{R}}^{(k)}t},
\end{align*}
where the coefficients $d_k^{(1)}$ and $d_k^{(2)}$ are determined according to
\begin{align}\label{linear-argebra}
	\underbrace{\left(
		{\begin{array}{*{20}c}
				1 & 1 &1 &1\\
				\lambda_1 & \lambda_2 & \lambda_3 & \lambda_4\\
				\lambda_1^2 & \lambda_2^2 & \lambda_3^2 & \lambda_4^2\\
				\lambda_1^3 & \lambda_2^3 & \lambda_3^3 & \lambda_4^3\\
		\end{array}}
		\right)}_{=:\mb{V}}
	\left(
	{\begin{array}{*{20}c}
			d_1^{(1)} \\
			d_1^{(2)} \\
			d_2^{(1)} \\
			d_2^{(2)}\\
	\end{array}}
	\right)
	=\underbrace{\left(
		{\begin{array}{*{20}c}
				\widehat{v}_0 \\
				\widehat{v}_1 \\
				\widehat{v}_2 \\
				\widehat{v}_3 \\
		\end{array}}
		\right)}_{=:\mb{D}}.
\end{align}
The determinant of this Vandermonde matrix $\mb{V}$ is represented by
\begin{align*}
	\det(\mb{V})&=\prod\limits_{1\leqslant j<k\leqslant 4}(\lambda_k-\lambda_j)\\
	&=-4\lambda_{\mathrm{I}}^{(1)}\lambda_{\mathrm{I}}^{(2)}\left[(\lambda_{\mathrm{R}}^{(2)}-\lambda_{\mathrm{R}}^{(1)})^2+(\lambda_{\mathrm{I}}^{(2)}+\lambda_{\mathrm{I}}^{(1)})^2\right]\left[(\lambda_{\mathrm{R}}^{(2)}-\lambda_{\mathrm{R}}^{(1)})^2+(\lambda_{\mathrm{I}}^{(2)}-\lambda_{\mathrm{I}}^{(1)})^2\right].
\end{align*}
From the well-known Cramer rule in the system of linear equations \eqref{linear-argebra}, the coefficients $d_{1,2}:=d_1^{(1,2)}$ and $d_{3,4}:=d_2^{(1,2)}$ are expressed by
\begin{align}\label{dj}
	d_j=\frac{\det(\mb{V}_j)}{\det(\mb{V})}\ \ \mbox{for}\ \ j\in\{1,2,3,4\},
\end{align}
in which $\mb{V}_j$ stands for the matrix formed by replacing the corresponding $j$-th column of $\mb{V}$ by the column vector $\mb{D}$ defined in \eqref{linear-argebra}. Then, carrying out lengthy but straightforward computations, the determinants of $\mb{V}_j$ can be shown explicitly as follows:
\begin{align*}
	\det(\mb{V}_1)&=-2i\lambda_2\lambda_{\mathrm{I}}^{(2)}\left[(\lambda_{\mathrm{R}}^{(2)})^2+(\lambda_{\mathrm{I}}^{(2)})^2\right]\left[(\lambda_{\mathrm{R}}^{(2)}-\lambda_2)^2+(\lambda_{\mathrm{I}}^{(2)})^2\right]\widehat{v}_0\\
	&\quad+\left[2i\lambda_{\mathrm{I}}^{(2)}\left[(\lambda_{\mathrm{R}}^{(2)})^2+(\lambda_{\mathrm{I}}^{(2)})^2\right]^2+\lambda_2^2\lambda_4^2(\lambda_4-\lambda_2)-\lambda_2^2\lambda_3^2(\lambda_3-\lambda_2) \right]\widehat{v}_1\\
	&\quad+\left[-4i\lambda_{\mathrm{R}}^{(2)}\lambda_{\mathrm{I}}^{(2)}\left[(\lambda_{\mathrm{R}}^{(2)})^2+(\lambda_{\mathrm{I}}^{(2)})^2\right]-\lambda_2\lambda_4(\lambda_4^2-\lambda_2^2)+\lambda_2\lambda_3(\lambda_3^2-\lambda_2^2) \right]\widehat{v}_2\\
	&\quad+2i\lambda_{\mathrm{I}}^{(2)}\left[(\lambda_{\mathrm{R}}^{(2)}-\lambda_2)^2+(\lambda_{\mathrm{I}}^{(2)})^2\right]\widehat{v}_3,
\end{align*}
\begin{align*}
	\det(\mb{V}_2)&=2i\lambda_1\lambda_{\mathrm{I}}^{(2)}\left[(\lambda_{\mathrm{R}}^{(2)})^2+(\lambda_{\mathrm{I}}^{(2)})^2\right]\left[(\lambda_{\mathrm{R}}^{(2)}-\lambda_1)^2+(\lambda_{\mathrm{I}}^{(2)})^2\right]\widehat{v}_0\\
	&\quad-\left[2i\lambda_{\mathrm{I}}^{(2)}\left[(\lambda_{\mathrm{R}}^{(2)})^2+(\lambda_{\mathrm{I}}^{(2)})^2\right]^2+\lambda_1^2\lambda_4^2(\lambda_4-\lambda_1)-\lambda_1^2\lambda_3^2(\lambda_3-\lambda_1) \right]\widehat{v}_1\\
	&\quad-\left[-4i\lambda_{\mathrm{R}}^{(2)}\lambda_{\mathrm{I}}^{(2)}\left[(\lambda_{\mathrm{R}}^{(2)})^2+(\lambda_{\mathrm{I}}^{(2)})^2\right]-\lambda_1\lambda_4(\lambda_4^2-\lambda_1^2)+\lambda_1\lambda_3(\lambda_3^2-\lambda_1^2) \right]\widehat{v}_2\\
	&\quad-2i\lambda_{\mathrm{I}}^{(2)}\left[(\lambda_{\mathrm{R}}^{(2)}-\lambda_1)^2+(\lambda_{\mathrm{I}}^{(2)})^2\right]\widehat{v}_3,
\end{align*}
\begin{align*}
	\det(\mb{V}_3)&=-2i\lambda_4\lambda_{\mathrm{I}}^{(1)}\left[(\lambda_{\mathrm{R}}^{(1)})^2+(\lambda_{\mathrm{I}}^{(1)})^2\right]\left[(\lambda_{\mathrm{R}}^{(1)}-\lambda_4)^2+(\lambda_{\mathrm{I}}^{(1)})^2\right]\widehat{v}_0\\
	&\quad+\left[2i\lambda_{\mathrm{I}}^{(1)}\left[(\lambda_{\mathrm{R}}^{(1)})^2+(\lambda_{\mathrm{I}}^{(1)})^2\right]^2+\lambda_1^2\lambda_4^2(\lambda_4-\lambda_1)-\lambda_2^2\lambda_4^2(\lambda_4-\lambda_2) \right]\widehat{v}_1\\
	&\quad+\left[-4i\lambda_{\mathrm{R}}^{(1)}\lambda_{\mathrm{I}}^{(1)}\left[(\lambda_{\mathrm{R}}^{(1)})^2+(\lambda_{\mathrm{I}}^{(1)})^2\right]-\lambda_1\lambda_4(\lambda_4^2-\lambda_1^2)+\lambda_2\lambda_4(\lambda_4^2-\lambda_2^2) \right]\widehat{v}_2\\
	&\quad+2i\lambda_{\mathrm{I}}^{(1)}\left[(\lambda_{\mathrm{R}}^{(1)}-\lambda_4)^2+(\lambda_{\mathrm{I}}^{(1)})^2\right]\widehat{v}_3,
\end{align*}
\begin{align*}
	\det(\mb{V}_4)&=2i\lambda_3\lambda_{\mathrm{I}}^{(1)}\left[(\lambda_{\mathrm{R}}^{(1)})^2+(\lambda_{\mathrm{I}}^{(1)})^2\right]\left[(\lambda_{\mathrm{R}}^{(1)}-\lambda_3)^2+(\lambda_{\mathrm{I}}^{(1)})^2\right]\widehat{v}_0\\
	&\quad-\left[2i\lambda_{\mathrm{I}}^{(1)}\left[(\lambda_{\mathrm{R}}^{(1)})^2+(\lambda_{\mathrm{I}}^{(1)})^2\right]^2+\lambda_1^2\lambda_3^2(\lambda_3-\lambda_1)-\lambda_2^2\lambda_3^2(\lambda_3-\lambda_2) \right]\widehat{v}_1\\
	&\quad-\left[-4i\lambda_{\mathrm{R}}^{(1)}\lambda_{\mathrm{I}}^{(1)}\left[(\lambda_{\mathrm{R}}^{(1)})^2+(\lambda_{\mathrm{I}}^{(1)})^2\right]-\lambda_1\lambda_3(\lambda_3^2-\lambda_1^2)+\lambda_2\lambda_3(\lambda_3^2-\lambda_2^2) \right]\widehat{v}_2\\
	&\quad -2i\lambda_{\mathrm{I}}^{(1)}\left[(\lambda_{\mathrm{R}}^{(1)}-\lambda_3)^2+(\lambda_{\mathrm{I}}^{(1)})^2\right]\widehat{v}_3.
\end{align*}
Let us denote $\mb{V}_{1,2}=:\mb{V}_1^{(1,2)}$ and $\mb{V}_{3,4}=:\mb{V}_2^{(1,2)}$ for briefness. Summing up the last determinants of $\mb{V}_j$  and the coefficients $d_j^{(k)}$ defined in \eqref{dj}, we conclude the refined representation of solution 
\begin{align}\label{Rep-two-pairs}
	\widehat{v}&=\sum\limits_{k=1,2}\left[\cos(\lambda_{\mathrm{I}}^{(k)}t)\,\frac{\det(\mb{V}_k^{(1)})+\det(\mb{V}_k^{(2)})}{\det(\mb{V})}+i\sin(\lambda_{\mathrm{I}}^{(k)}t)\,\frac{\det(\mb{V}_k^{(1)})-\det(\mb{V}_k^{(2)})}{\det(\mb{V})}\right]\mathrm{e}^{\lambda_{\mathrm{R}}^{(k)}t}.
\end{align}
\subsection{Representation of solution in Case (A-2)}
\hspace{5mm}Different from Case (A-1), there are a pair of conjugate roots and two distinct real roots in Case (A-2). Thus, the solution to the Cauchy problem \eqref{General-Cauchy-problem} is shown as
\begin{align*}
	\widehat{v}
	=d_1^{(1)}\mathrm{e}^{\lambda_1t}+d_1^{(2)}\mathrm{e}^{\lambda_2t}+\left[(d_2^{(1)}+d_2^{(2)})\cos(\lambda_{\mathrm{I}}t)+i(d_2^{(1)}-d_2^{(2)})\sin(\lambda_{\mathrm{I}}t)\right]\mathrm{e}^{\lambda_{\mathrm{R}}t},
\end{align*}
where the coefficients $d_k^{(1)}$ and $d_k^{(2)}$ are determined according to \eqref{linear-argebra} with different structure of $\lambda_{1,2}$, and the determinant of the present Vandermonde matrix is represented via
\begin{align*}
	\det(\mb{V})=
	-2i\lambda_{\mathrm{I}}\left[(\lambda_{\mathrm{R}}-\lambda_2)^2+\lambda_{\mathrm{I}}^2\right]\left[(\lambda_{\mathrm{R}}-\lambda_1)^2+\lambda_{\mathrm{I}}^2\right](\lambda_2-\lambda_1).
\end{align*}
Let us apply the Cramer rule again with \eqref{dj} and the following determinants by lengthy but straightforward calculations: 
\begin{align*}
	\det(\mb{V}_1)&=-2i\lambda_2\lambda_{\mathrm{I}}(\lambda_{\mathrm{R}}^2+\lambda_{\mathrm{I}}^2)\left[(\lambda_{\mathrm{R}}-\lambda_2)^2+\lambda_{\mathrm{I}}^2\right]\widehat{v}_0\\
	&\quad+\left[2i\lambda_{\mathrm{I}}(\lambda_{\mathrm{R}}^2+\lambda_{\mathrm{I}}^2)^2+\lambda_2^2\lambda_4^2(\lambda_4-\lambda_2)-\lambda_2^2\lambda_3^2(\lambda_3-\lambda_2)\right]\widehat{v}_1\\
	&\quad+\left[-4i\lambda_{\mathrm{R}}\lambda_{\mathrm{I}}(\lambda_{\mathrm{R}}^2+\lambda_{\mathrm{I}}^2)-\lambda_2\lambda_4(\lambda_4^2-\lambda_2^2)+\lambda_2\lambda_3(\lambda_3^2-\lambda_2^2)\right]\widehat{v}_2\\
	&\quad+2i\lambda_{\mathrm{I}}\left[(\lambda_{\mathrm{R}}-\lambda_2)^2+\lambda_{\mathrm{I}}^2\right]\widehat{v}_3,
\end{align*}
\begin{align*}
	\det(\mb{V}_2)&=2i\lambda_1\lambda_{\mathrm{I}}(\lambda_{\mathrm{R}}^2+\lambda_{\mathrm{I}}^2)\left[(\lambda_{\mathrm{R}}-\lambda_1)^2+\lambda_{\mathrm{I}}^2\right]\widehat{v}_0\\
	&\quad-\left[2i\lambda_{\mathrm{I}}(\lambda_{\mathrm{R}}^2+\lambda_{\mathrm{I}}^2)^2+\lambda_1^2\lambda_4^2(\lambda_4-\lambda_1)-\lambda_1^2\lambda_3^2(\lambda_3-\lambda_1)\right]\widehat{v}_1\\
	&\quad-\left[-4i\lambda_{\mathrm{R}}\lambda_{\mathrm{I}}(\lambda_{\mathrm{R}}^2+\lambda_{\mathrm{I}}^2)-\lambda_1\lambda_4(\lambda_4^2-\lambda_1^2)+\lambda_1\lambda_3(\lambda_3^2-\lambda_1^2)\right]\widehat{v}_2\\
	&\quad-2i\lambda_{\mathrm{I}}\left[(\lambda_{\mathrm{R}}-\lambda_1)^2+\lambda_{\mathrm{I}}^2\right]\widehat{v}_3,
\end{align*}
\begin{align*}
	\det(\mb{V}_3)&=\lambda_1\lambda_2\lambda_4(\lambda_4-\lambda_2)(\lambda_4-\lambda_1)(\lambda_2-\lambda_1)\widehat{v}_0\\
	&\quad-\left[\lambda_2^2\lambda_4^2(\lambda_4-\lambda_2)-\lambda_1^2\lambda_4^2(\lambda_4-\lambda_1)+\lambda_1^2\lambda_2^2(\lambda_2-\lambda_1)\right]\widehat{v}_1\\
	&\quad+\left[\lambda_2\lambda_4(\lambda_4^2-\lambda_2^2)-\lambda_1\lambda_4(\lambda_4^2-\lambda_1^2)+\lambda_1\lambda_2(\lambda_2^2-\lambda_1^2)\right]\widehat{v}_2\\
	&\quad-(\lambda_4-\lambda_2)(\lambda_4-\lambda_1)(\lambda_2-\lambda_1)\widehat{v}_3,
\end{align*}
\begin{align*}
	\det(\mb{V}_4)&=-\lambda_1\lambda_2\lambda_3(\lambda_3-\lambda_2)(\lambda_3-\lambda_1)(\lambda_2-\lambda_1)\widehat{v}_0\\
	&\quad+\left[\lambda_2^2\lambda_3^2(\lambda_3-\lambda_2)-\lambda_1^2\lambda_3^2(\lambda_3-\lambda_1)+\lambda_1^2\lambda_2^2(\lambda_2-\lambda_1)\right]\widehat{v}_1\\
	&\quad-\left[\lambda_2\lambda_3(\lambda_3^2-\lambda_2^2)-\lambda_1\lambda_3(\lambda_3^2-\lambda_1^2)+\lambda_1\lambda_2(\lambda_2^2-\lambda_1^2)\right]\widehat{v}_2\\
	&\quad+(\lambda_3-\lambda_2)(\lambda_3-\lambda_1)(\lambda_2-\lambda_1)\widehat{v}_3.
\end{align*}
Finally, summarizing the last determinants of $\mb{V}_j$ and the coefficients $d_j^{(k)}$,  we are able to conclude the refined representation of solution
\begin{align}\label{Rep-one-pair}
	\widehat{v}&=\frac{\det(\mb{V}_1)}{\det(\mb{V})}\,\mathrm{e}^{\lambda_1t}+\frac{\det(\mb{V}_2)}{\det(\mb{V})}\,\mathrm{e}^{\lambda_2t}+\cos(\lambda_{\mathrm{I}}t)\,\frac{\det(\mb{V}_3)+\det(\mb{V}_4)}{\det(\mb{V})}\,\mathrm{e}^{\lambda_{\mathrm{R}}t}\notag\\
	&\quad+i\sin(\lambda_{\mathrm{I}}t)\,\frac{\det(\mb{V}_3)-\det(\mb{V}_4)}{\det(\mb{V})}\,\mathrm{e}^{\lambda_{\mathrm{R}}t}.
\end{align}

\section*{Acknowledgments} 
 Wenhui Chen is supported in part by the National Natural Science Foundation of China (grant No. 12301270, grant No. 12171317), Guangdong Basic and Applied Basic Research Foundation (grant No. 2025A, grant No. 2023A1515012044). 2024 Basic and Applied Basic Research Topic--Young Doctor Set Sail Project (grant No. 2024A04J0016). Ryo Ikehata is supported in part by Grant-in-Aid for scientific Research (C) 20K03682 of JSPS.

\end{document}